  \theoremstyle{plain}
\newtheorem{thm}{Theorem}[section]
\newtheorem*{thm*}{Theorem}
\newtheorem*{mthm*}{Main Theorem}
\newtheorem{prop}[thm]{Proposition}
\newtheorem{cor}[thm]{Corollary}
\newtheorem{lem}[thm]{Lemma}
\newtheorem{athm}{Theorem}[section]
\newtheorem{alem}[athm]{Lemma}
\newtheorem{acor}[athm]{Corollary}
\newtheorem*{obs*}{Observation}
\newtheorem*{sum*}{Summary}
  \theoremstyle{definition}
\newtheorem*{qn*}{Question}
  \theoremstyle{remark}
\newtheorem*{rmk}{Remark}
\renewcommand*\l@section[2]{%
  \ifnum \c@tocdepth >\z@
    \addpenalty\@secpenalty
    \addvspace{1.0em \@plus\p@}%
    \setlength\@tempdima{2em}%
    \begingroup
      \parindent \z@ \rightskip \@pnumwidth
      \parfillskip -\@pnumwidth
      \leavevmode \bfseries
      \advance\leftskip\@tempdima
      \hskip -\leftskip
      #1\nobreak\hfil
      \nobreak\hb@xt@\@pnumwidth{\hss #2%
                                 \kern-\p@\kern\p@}\par
    \endgroup
  \fi}
\renewcommand*\l@subsection{\@dottedtocline{2}{2em}{2.5em}}
\newcommand{\nonumsec}[1]{\section*{#1}
\addcontentsline{toc}{section}{\protect\numberline{}#1}}
\newcommand{\fakeenv}{} %%% prints the emptystring
\newenvironment{restate}[2]  %%% restate takes two arguments
{
  \renewcommand{\fakeenv}{#2} %%% So now \fakeenv prints #2
  \theoremstyle{plain}
  \newtheorem*{\fakeenv}{#1~\ref{#2}} %%% so now #2 is the name of a
                                      %%% theorem-like environment.
  \begin{\fakeenv}
}
{
  \end{\fakeenv}
}
\newcommand*{\defeq}{\mathrel{\vcenter{\baselineskip0.5ex \lineskiplimit0pt \hbox{\scriptsize.}\hbox{\scriptsize.}}}=}
\title{Limit pretrees for free group automorphisms: existence}
\author{Jean Pierre Mutanguha\thanks{{\it Email:} {\tt \href{mailto:mutanguha@mpim-bonn.mpg.de}{mutanguha@mpim-bonn.mpg.de}}, {\it Web address:} {\tt \url{https://mutanguha.com}} \newline Max Planck Institute for Mathematics, Bonn, Germany}}
\begin{document}
\maketitle

\begin{abstract} To any free group automorphism, we associate a real pretree with several nice properties. 
First, it has a rigid/non-nesting action of the free group with trivial arc stabilizers. 
Secondly, there is an expanding pretree-automorphism of the real pretree that represents the free group automorphism.
Finally and crucially, the loxodromic elements are exactly those whose (conjugacy class) length grows exponentially under iteration of the automorphism; thus, the action on the real pretree is able to detect the growth type of an element.

This construction extends the theory of metric trees that has been used to study free group automorphisms.
The new idea is that one can equivariantly blow up an isometric action on a real tree with respect to other real trees and get a rigid action on a treelike structure known as a real pretree.
Topology plays no role in this construction as all the work is done in the language of pretrees (intervals).
%This abstraction seems interesting in its own right. 
\end{abstract}

\renewcommand{\thefootnote}{\fnsymbol{footnote}} 
\footnotetext{\emph{MSC Codes} 20F65, 20E05, 20E36}     
\renewcommand{\thefootnote}{\arabic{footnote}}

\section*{Introduction}

The study of free group outer automorphisms shares a lot with the theory of mapping class groups of surfaces.
The relevant dictionary replaces compact surfaces with finite graphs (i.e.~finite 1-dimensional CW-complexes) and surface homeomorphisms with graph homotopy equivalences.
While many results in surface theory have analogues in the free group setting, the situation for mapping classes tends to be comparatively well-behaved as one is working with surface homeomorphisms.
On the other hand, an infinite order outer automorphism of a free group can never be represented by a homeomorphism of a graph.
As a consequence, we have fewer tools at our disposal to study free group automorphisms.
One such important missing tool is a complete analogue of Nielsen--Thurston theory.

\subsection*{Nielsen--Thurston theory}
The main goal of the current project is defining an analogue of singular measured foliations in the free group setting.
Let us start with a summary of the Nielsen--Thurston theory for surface homeomorphisms.

Suppose~$S$ is a compact surface with negative Euler characteristic and let $f\colon S \to S$ be a homeomorphism. William Thurston proved~\cite[Theorem~4]{Thu88} there is:
\begin{enumerate}
\item a {\it canonical} (potentially empty) union~$\gamma$ of essential simple closed curves;
\item a homeomorphism~$g\colon S \to S$ isotopic to~$f$ that leaves a regular neighbourhood~$N(\gamma)$ of the multicurve $\gamma$ invariant; and
\item the restriction of $g$ to permuted components of the complement $S \setminus N(\gamma)$  is either:
\begin{itemize}
\item a {\it pseudo-Anosov} homeomorphism, i.e.~has a {\it canonical} invariant measured singular foliation whose transverse measure is scaled by a stretch factor $\lambda > 1$;~or
\item a {\it linear} homeomorphism, i.e.~reducible (possibly non-canonically) to a finite order homeomorphism --- Birman--Lubotzky--McCarthy later proved the latter reduction can be made canonical~\cite[Theorem~C]{BLM83} but that is not in the standard expositions of Thurston's result.
\end{itemize}
\end{enumerate}

Thus, up to isotopy, we may assume a given surface homeomorphism $S \to S$ has a canonical decomposition into pseudo-Anosov components and their complements, and each orbit~$S'$ of pseudo-Anosov components has an invariant measured singular foliation whose transverse measure is scaled by some stretch factor~$\lambda_{S'} > 1$. 
Lifting this decomposition to the universal cover~$\widetilde{S}$ gives a canonical partition of the cover into lifts of the foliation's leaves and the unfoliated complementary regions.

One may now endow this universal cover with a pseudo-metric using the lift of the transverse measure of the foliation.
Morgan--Shalen proved~\cite[Section~1]{MS91} that the corresponding metric space is an {\it $\mathbb R$-tree}, or simply {\it tree} as we shall call it in this paper, and the action of~$\pi_1(S)$ on the universal cover by deck transformations induces an action on this tree by isometries, or simply {\it isometric} action.
Finally, any lift of the surface homeomorphism induces an expanding ``dilation'' of the tree: the tree decomposes into finitely many orbits of subtrees and the restriction of the induced map to each subtree is an expanding homothety --- the expansion factor may vary with the subtree's orbit.
%This {\it (forward) limit tree} is the ``dual'' to the homeomorphism's invariant (unstable) measured singular foliations.

The minimal isometric $\pi_1(S)$-action on the tree has two interesting properties:
\begin{itemize}
\item arc (pointwise) stabilizers are trivial; and
\item an element of $\pi_1(S)$ is {\it elliptic} if and only if its conjugacy class is represented by a closed curve in the complement of the pseudo-Anosov components.
\end{itemize}
Moreover, the tree's equivariant dilation class is canonical by construction.

\subsection*{Our motivation and main theorem}
When we started this project, we wanted to prove a direct free group analogue of this statement about the canonical limit tree --- the dilation requirement on the expanding homeomorphism would have been relaxed and complements of pseudo-Anosov components replaced by {\it polynomially growing} subgroups.
This proved to be extremely elusive and we now doubt such a metric analogue always exists!
%in Section~\ref{SecObstructions}, we discuss a heurestic explanation for why our original goal was doomed to fail.

\medskip
Fortunately, we did manage to prove a variation to this analogy.
Let us return to the limit $\mathbb R$-tree~$T$ given by the canonical decomposition of the universal cover~$\widetilde{S}$ and the transverse measure.
Closed arcs in~$T$ determine a {\it real pretree} structure;
a real pretree is essentially what you get when you try to define an $\mathbb R$-tree without a metric or topology (see Section~\ref{subsec - trees.pretrees}).
The isometric $\pi_1(S)$-action on the $\mathbb R$-tree determines a {\it rigid} $\pi_1(S)$-action on its real pretree.
Rigid actions are commonly known as {\it non-nesting} actions in the literature~\cite{Lev98}, but we find the name ``rigid'' more evocative.

%Since we assume the genus of the surface was at least 2, we may choose a hyperbolic metric on~$S$ and identify the universal cover with the hyperbolic plane~$\mathbb H$.
%Thus the universal cover~$\widetilde{S}$ has a circle~$S^1$ at infinity.
%The canonical partition of the univsersal cover determines an equivariant equivalence relation on~$S^1$: two points are related if and only if they are limit points of a partition element.
%The quotient space (with the quotient topology) under this equivalence relation is a {\it dendrite}. 
%The induced action of $\pi_1(S)$ on the quotient is necessarily a {\it convergence action} and the filtering property of the action is restated as: 

%\begin{itemize}
%\item an element is {\it parabolic} if and only if it is represented by a closed curve in a linear component of~$S$.
%\end{itemize} 

There is no simple way to capture the ``expanding'' nature of the homotheties without a metric on the real pretree.
We introduce the term {\it (free group)-expanding} to describe the induced pretree-automorphisms (see Section~\ref{subsec - trees.pretrees}).
%We also introduce a weaker notion of a convergence action, {\it convergence-like} action, that still classifies elements as parabolic or loxodromic.
With this new terminology, the free group analogue for the real pretree statement becomes:

\begin{mthm*}[Theorem~\ref{thm - limpretrees}] If $\phi\colon F \to F$ is an automorphism of a finitely generated free group, then there is:
\begin{enumerate}
\item a minimal rigid $F$-action on a real pretree~$T$ with trivial arc stabilizers;
\item a $\phi$-equivariant $F$-expanding pretree-automorphism $f\colon {T} \to {T}$; and
\item an element in $F$ is $T$-elliptic if and only if it \emph{grows polynomially} under $[\phi]$-iteration.
\end{enumerate}
The nontrivial point stabilizers are finitely generated. 
Moreover, these subgroups are proper and have rank strictly less than that of~$F$ if and only if~$[\phi]$ is \emph{exponentially growing}.
\end{mthm*}
\begin{rmk} %Note that the conclusion of the theorem is still about the original automorphism and not some iterate. 
%To facilitate induction without passing to iterates, nearly all results in the paper (including Theorem~\ref{thm - limpretrees}) will be stated and proven in terms of {\it free group systems}.
The third condition can be restated in terms of loxodromic elements.
The actual theorem will be stated and proven in a         ``relative setting''. The growth types of elements and automorphisms are defined in Chapter~\ref{SecLimitsexp}.\end{rmk}

%We suspect that the convergence-like action can be strengthened to a convergence action but proving this requires a more in-depth study of actions on trees with dense orbits and trivial arc stabilizers.
%This is the current work-in-progress.

\medskip
We suspect that the action's rigidity can be strengthened: 
perhaps, the induced action on the {\it pretree completion} is a {\it convergence action}.
To complete the analogy with the surface setting, we discuss in the epilogue the extent to which a limit pretree produced by this theorem is canonical --- this is proven in the sequel~\cite{Mut22}.
This is of much interest as it is a canonical representation of a free group automorphism.
For instance, it allows us to classify automorphisms in terms of {\it the} limit pretree's {\it index} (see Appendix~\ref{SecIndexing}).

\medskip
In the long run, we would also like to find interesting canonical representations of polynomially growing automorphisms --- any pretree of Theorem~\ref{thm - limpretrees} is a point for these automorphisms.

\bigskip
\noindent \textbf{Proof outline for the main theorem.} Using irreducible train tracks, construct an isometric action on a limit tree with trivial arc stabilizers.
This action admits an expanding homothety that represents the given free group automorphism. 
In particular, polynomially growing elements are elliptic;
however, it is possible for some exponentially growing elements to be elliptic as well.
By considering restrictions of the automorphism to point stabilizers, we get a hierarchy of isometric actions on trees.
Most importantly, elements grow polynomially if and only if they are elliptic in each tree in the hierarchy.

The key step describes how to combine this hierarchy of trees, through blow-ups, into one {\it treelike} structure --- a real pretree.
While intuitive, the details of this construction get a bit technical.
If done appropriately, this new structure will admit an $F$-expanding pretree-automorphism.
Since we have a hierarchy of expanding homotheties, the contraction mapping theorem implies the blow-up construction can indeed be done appropriately!

\medskip
\noindent \textbf{Acknowledgments:} I would like to thank Ursula Hamenst\"adt, for suggestions that helped me clarify the blow-up construction, and the referee, for improving my exposition. I am grateful to the Max Planck Institute for Mathematics for their continued support.

\tableofcontents

%%%%%%%%%%%%%%%%%%%%%%%%%%%%%%%%%%%%%%%%%%%%%%%%%%%%%%%%%%
%\nonumsec{Preparation}
%%%%%%%%%%%%%%%%%%%%%%%%%%%%%%%%%%%%%%%%%%%%%%%%%%%%%%%%%%

%%%%%%%%%%%%%%%%%%%%%%%%%%%%%%%%%%%%%%%%%%%%%%%%%%%%%%%%%%
\section{Preliminaries}\label{SecPrelims}
%%%%%%%%%%%%%%%%%%%%%%%%%%%%%%%%%%%%%%%%%%%%%%%%%%%%%%%%%%

In this paper, $F$ will denote a nontrivial free group of finite rank. Note that subscripts will never indicate the rank but will instead be mostly used to index a collection of free groups. 

\subsection{Free splittings and topological representatives}
A \underline{free splitting} of~$F$ is a simplicial tree $T$ (i.e.~1-dimensional contractible CW-complex) and a {\it minimal} (left) $F$-action by simplicial automorphisms with trivial  edge stabilizers. 
We will also assume the simplicial tree~$T$ has no bivalent vertices.
Choose a maximal set of orbit representatives~$\mathcal V$ for the set of vertices of~$T$ whose {\it half-edge neighbourhood} is connected.
If the simplicial tree~$T$ is not a singleton, then the collection of nontrivial stabilizers~$\mathcal G$ of vertices in~$\mathcal V$ is a {\it proper free factor system} of~$F$ by Bass--Serre theory~\cite{Ser77}. 
After labelling the vertices of the finite quotient graph $\Gamma = F \backslash T$ with~$\mathcal G$, we get a graph of groups decomposition $(\Gamma, \mathcal G)$ of $F$ with trivial edge groups.
In fact, by the fundamental theorem of Bass--Serre theory, there is a one-to-one correspondence between free splittings of $F$ and graph of groups decompositions of $F$ with trivial edge groups.
For free splittings, we may use $(\Gamma, \mathcal G)$ as a synonym for $T$. 
%Special notation will be used for \underline{roses} of~$F$ (with trivial vertex groups): if $\Gamma$ has a single vertex, then we use $R$ and $\widetilde R$ instead of $\Gamma$ and $(\Gamma, \emptyset)$ respectively.

\medskip
%Let $\phi:F \to F$ be an automorphism and $T$ a free splitting of $F$.  
A \underline{(relative) topological representative} of an automorphism $\phi\colon F \to F$ on a free splitting $T$ of $F$ is a map $f\colon T \to T$ satisfying the following conditions:
\begin{itemize}
\item {\it cellular:} $f$ maps vertices to vertices and is injective on edges; and
\item {\it $\phi$-equivariant:} $f(x \cdot p) = \phi(x) \cdot f(p)$ for all $x \in F$ and $p \in T$.
\end{itemize}
By definition, a topological representative on $T$ induces a cellular map $[f]\colon \Gamma \to \Gamma$ on the quotient graph $\Gamma$.

Suppose $f\colon T \to T$ is a topological representative of an automorphism $\phi\colon F \to F$, $v \in \mathcal V$ a vertex orbit representative, and $G_v \in \mathcal G$ the stabilizer of $v$. 
Then $f(v) = x_v \cdot w$ for some element $x_v \in F$ and vertex orbit representative $w \in \mathcal V$. 
The stabilizer of $f(v)$ is $x_vG_w x_v^{-1}$ and, if $G_v$ is not trivial, then the restriction of $\phi$ to $G_v$ is an isomorphism $G_v \to x_vG_w x_v^{-1}$. 
By post-composing with the inner automorphism $\mathrm{inn}(x_v^{-1})\colon F \to F$ that maps $y \mapsto x_v^{-1} y x_v$, we get a homomorphism $\phi_v\colon G_v \to G_w$.
The {\it outer class} $[\phi_v]$ is independent of the chosen element~$x_v \in F$ with $f(v) = x_v \cdot w$, i.e.~the homomorphism~$\phi_v$ is unique up to post-composition with an inner automorphism of $G_w$. 
The collection $\{\phi_v~:~ v \in \mathcal V \}$ is denoted by $\left.\phi\right|_{\mathcal G}$ and called a \underline{restriction of~$\phi$} to~$\mathcal G$.

\medskip
For the rest of the paper, we shall restrict $\mathcal V$ to the subset consisting of vertices with nontrivial stabilizers.
This is mostly a stylistic choice made to simplify the exposition. 
For instance, the restriction $\left.\phi\right|_{\mathcal G}$ will permute the nontrivial stabilizers in $\mathcal G$ and, under this assumption, can be considered an automorphism of $\mathcal G$.

\subsection{Free group systems and automorphisms}
To formalize the last statement, we define a \underline{(countable) group system} $\mathcal H$ to be a disjoint union $\bigsqcup_{i \in \mathcal I} H_i$ of countably many nontrivial countable groups;
the latter are the {\it components} of $\mathcal H$. The \underline{empty system} is the group system with empty index set~$\mathcal I$.
If all component groups~$H_i$ have [property-?], then we shall call~$\mathcal H$ a ``[property-?] group system''.
For instance, we will mainly work with {\it subgroup} systems and {\it free group} systems. 
In some ambient group, a subgroup system $\mathcal G$ \underline{carries} another subgroup system $\mathcal G'$ if each $\mathcal G'$-component is contained in a conjugate of some $\mathcal G$-component.

The \underline{complexity} of a (possibly trivial) free group~$H$~is $c(H) = 2 \cdot \mathrm{rank}(H) - 1$, which takes values in $\mathbb Z_{\ge -1} \cup \{ \infty \}$.
For a (possibly empty) free group system~$\mathcal H$,
\[c(\mathcal H) \defeq \sum_{i \in \mathcal I} c(H_i) \quad \in ~ \mathbb Z_{\ge 0} \cup \{ \infty \}.\]
A group system has \underline{finite type} if the index set is finite and components are finitely generated.
In particular, a free group system has finite type exactly when its complexity is finite. In this paper, $\mathcal F$ will denote a nonempty free group system of finite type.

\medskip
The collection of nontrivial vertex stabilizers $\mathcal G$ for a free splitting of $F$ can and will be viewed as a subgroup (or rather, free factor) system of finite type (even if empty).
Similarly, we define a free splitting~$\mathcal T$ of~$\mathcal F$ to be a ``disjoint union'' of free splittings of the components of $\mathcal F$.
A free splitting~$\mathcal T$ is \underline{degenerate} if all component simplicial trees~$T_i$ are singletons.
%A component of $\mathcal F$ is \underline{nondegenerate-in-$\mathcal T$} if the corresponding component in $\mathcal T$ is not a singleton.
By passing to the (representatives of) nontrivial vertex stabilizers in a nondegenerate free splitting, we can inductively form a descending chain of free factor systems with strictly decreasing complexity.
Starting with $F$, the length of such a chain is at most $c(F) + 1$.
We will mostly state and prove the results in terms of free group systems to facilitate induction on complexity.

\medskip
An \underline{automorphism} $\psi\colon \mathcal H \to \mathcal H$ of a group system~$\mathcal H$ is a disjoint union of isomorphisms $\psi_i\colon H_i \to H_{\sigma \cdot i}~(i \in \mathcal I)$ where~$\sigma \in \mathrm{Sym}(\mathcal I)$.
A subgroup system $\mathcal G = \bigsqcup_{j \in \mathcal J} G_j$ of $\mathcal H$ is \underline{$[\psi]$-invariant} if $\psi(G_j) = h_j \, G_{\alpha \cdot j} \, h_j^{-1}$ for all $j \in \mathcal J$, for some choice $( h_j \in \mathcal H : j \in \mathcal J)$ and $\alpha \in \mathrm{Sym}(\mathcal J)$. 
%The subgroup system~$\mathcal H$ is \underline{malnormal} in~$\mathcal G$ if: for any $j,j' \in \mathcal V$ and any element~$g$ in~$\mathcal G$, $H_j$ intersects $ g H_{j'} g^{-1}$ nontrivially if and only if $j = j'$ and~$g \in H_j$. 
%When a $[\psi]$-invariant subgroup system~$\mathcal H$ is also malnormal, then the outer class of the restriction $\left.\phi\right|_{\mathcal G} = \{ \mathrm{inn}(g_j) \circ \left.\phi\right|_{H_j} : H_j \to H_{\alpha(j)} \}$ is independent of the subset $\{ g_j \}_{j \in \mathcal J}$. 
The collection of isomorphisms $\left.\psi\right|_{\mathcal G} = \{ \mathrm{inn}(h_j^{-1}) \circ \left.\psi\right|_{G_j} \colon G_j \to G_{\alpha \cdot j} \}$ will be called a \underline{restriction of $\psi$} to $\mathcal G$;
note that $\left.\psi\right|_{\mathcal G}$ involves an implicit choice $( h_j : j \in \mathcal J)$.
%A weaker form of malnormality is a necessary and sufficient condition for the outer class $\left[\left.\phi\right|_{\mathcal G}\right]$ to be well-defined.

Let $\psi\colon \mathcal F \to \mathcal F$ be an automorphism and $\mathcal T$ a free splitting of $\mathcal F$. 
A topological representative of $\psi$ on $\mathcal T$ is a disjoint union of $\psi_i$-equivariant cellular maps $f_i\colon T_i \to T_{\sigma \cdot i}$.

\subsection{Train track theory}
We conclude the chapter with some preliminary results from Bestvina--Handel's theory of train tracks~\cite[Section~1]{BH92}.
A \underline{(relative) train track} for an automorphism $\psi\colon \mathcal F \to \mathcal F$ is a topological representative of $\psi$ whose iterates are all topological representatives as well; 
equivalently, a train track is a topological representative whose restrictions of iterates to any edge are all injective.
A topological representative $f \colon \mathcal T \to \mathcal T$ is \underline{irreducible} if for any pair of edges $[e], [e']$ in the quotient graph $\Gamma_* = \mathcal F \backslash \mathcal T$, there is an integer $n \ge 1$ for which $[f^n(e')]$ contains $[e]$.
To any topological representative $f$, we can associate a nonnegative integer square matrix~$A(f)$: rows and columns are indexed by the (unoriented) edges $[e], [e']$ resp. and matrix entry is the number of times $[e]$ appears in the path $[f(e')]$; then~$f$ is irreducible if and only if~$A(f)$ is {\it irreducible}.

Let $f\colon \mathcal T \to \mathcal T$ be an irreducible topological representative. 
By Perron-Frobenius theory, the matrix $A(f)$ has a unique eigenvalue $\lambda(f) \ge 1$ with a positive eigenvector $\nu(f)$;
it follows from the theory that $f$ is a simplicial automorphism if $\lambda(f) = 1$.
Using~$\nu(f)$, we can equip $\mathcal T$ with an invariant metric, i.e.~$\mathcal F$ acts by isometries with respect to this metric; 
furthermore, after applying an equivariant isotopy, the restriction of $f$ to any edge will be a $\lambda(f)$-homothety. 
%The eigenvalue $\lambda(f)$ will be referred to as the \underline{stretch factor} of $f$ and 
The metric on $\mathcal T$ will be referred to as the \underline{eigenmetric}. 
We have set the stage for the foundational theorem due to Bestvina--Handel:

\begin{thm}[cf.~{\cite[Theorem~1.7]{BH92}}]\label{thm - findtt} If $\psi\colon \mathcal F \to \mathcal F$ is an automorphism of a free group system $\mathcal F$ and $\mathcal G$ a $[\psi]$-invariant proper free factor system of $\mathcal F$, then there is an irreducible train track $\tau\colon \mathcal T \to \mathcal T$ for $\psi$ defined on some nondegenerate free splitting $\mathcal T$ of~$\mathcal F$ whose vertex stabilizers carry $\mathcal G$.
\end{thm}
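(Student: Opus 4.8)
This is the ``irreducible'' case of the Bestvina--Handel train track theorem, transported to the relative, group-system setting; the plan is to isolate the two new features --- passing to a maximal invariant system, and the group-system/permutation bookkeeping --- and then feed the rest to \cite[Theorem~1.7]{BH92}. First I would enlarge $\mathcal G$: order the $[\psi]$-invariant proper free factor systems of $\mathcal F$ that carry $\mathcal G$ by carrying. This poset is nonempty ($\mathcal G$ itself lies in it), and since passing from a free factor system to a properly carried subsystem strictly decreases complexity while $c(\mathcal F)<\infty$, all chains are finite; pick a maximal element $\mathcal G^{+}$. Any $[\psi]$-invariant free factor system strictly between $\mathcal G^{+}$ and $\mathcal F$ would be proper, would carry $\mathcal G$, and would properly carry $\mathcal G^{+}$, contradicting maximality --- so $\psi$ is \emph{irreducible relative to} $\mathcal G^{+}$. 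Since carrying is transitive, it suffices to produce an irreducible train track whose vertex stabilizers carry $\mathcal G^{+}$.

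\textbf{Reduction to a single free group.} Write $\mathcal F=\bigsqcup_{i\in\mathcal I}F_i$, with $\psi$ permuting the components by $\sigma\in\mathrm{Sym}(\mathcal I)$, and treat one $\sigma$-orbit $\{i,\sigma i,\dots,\sigma^{p-1}i\}$ at a time. On such an orbit $\bar\psi \defeq \psi^{p}|_{F_i}$ is an automorphism of $F_i$, and the $\mathcal G^{+}$-components contained in $F_i$ form a $[\bar\psi]$-invariant free factor system $\mathcal A$ of $F_i$, again maximal. From an irreducible train track $g\colon S\to S$ for $\bar\psi$ whose vertex stabilizers carry $\mathcal A$, I would build a train track over the orbit in the standard way: set $T_i=S$, transport $S$ along the component isomorphisms to define $T_{\sigma^{k}i}$, let $\tau_{\sigma^{k}i}$ be the transport isomorphism for $0\le k\le p-2$, and choose $\tau_{\sigma^{p-1}i}$ so that the composite around the orbit equals $g$ up to a translation absorbing the outer-class ambiguity in the $\psi_i$. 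The transition matrix on this orbit's quotient graph is then cyclic-block with block $A(g)$, hence irreducible. Taking the disjoint union over all $\sigma$-orbits (and reading ``irreducible'' one orbit at a time, since distinct $\sigma$-orbits make the full transition matrix block-decomposable) reduces everything to the single-group case.

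\textbf{The single-group core.} For $F_i$ with an automorphism $\bar\psi$ irreducible relative to a maximal $[\bar\psi]$-invariant proper free factor system $\mathcal A$: realize $\mathcal A$ as the nontrivial vertex stabilizers of a nondegenerate free splitting $S_0$ of $F_i$, fix any topological representative $g_0\colon S_0\to S_0$ of $\bar\psi$, and run the Bestvina--Handel improvement algorithm relative to $\mathcal A$ --- subdivisions, folds, tightenings, and collapses of invariant subforests not meeting the vertex stabilizers, as in \cite[Theorem~1.7]{BH92}. The algorithm terminates either at an irreducible train track or at a representative exhibiting a $[\bar\psi]$-invariant free factor system strictly between $\mathcal A$ and $F_i$; maximality of $\mathcal A$ rules out the latter, so the former occurs, and by construction its vertex stabilizers carry $\mathcal A$, hence $\mathcal G^{+}$, hence $\mathcal G$.

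\textbf{Main obstacle.} No new idea beyond \cite{BH92} is needed; the difficulty is bookkeeping. The main point is to check that each Bestvina--Handel move, and the termination argument, survive being performed \emph{relative to} the held-fixed part realizing $\mathcal A$ and equivariantly over a $\sigma$-orbit --- i.e.\ that the algorithm never needs to disturb the part realizing $\mathcal A$. A secondary point is the transport bookkeeping above: one must carry along the group-element translations forced by the outer-class ambiguity of the $\psi_i$ so that going once around a $\sigma$-orbit genuinely recovers the single-group train track $g$. Both are routine but notation-heavy, which is presumably why the statement is only ``cf.''~\cite[Theorem~1.7]{BH92} rather than a verbatim quotation.
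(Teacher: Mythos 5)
Your proposal is correct in substance and uses the same underlying black box (the Bestvina–Handel relative train track algorithm), but it organizes the argument differently from the paper. The paper's proof is a bounded iteration: feed the current vertex-stabilizer system into the modified BH algorithm, obtain either an irreducible train track (stop) or a strictly larger $[\psi]$-invariant proper free factor system, and repeat; the complexity gap $c(\mathcal F)-c(\mathcal G)$ bounds the number of rounds. You instead front-load the termination argument by first extracting a maximal $[\psi]$-invariant proper free factor system $\mathcal G^{+}$ carrying $\mathcal G$ (using the same finiteness-of-complexity fact), and then run the algorithm once, with maximality ruling out the reducibility outcome. The two routes are logically equivalent — ``maximize then run once'' versus ``iterate until stuck'' — and each buys something: the paper's version is shorter to state given the algorithm's structure, while yours makes the role of relative irreducibility explicit and cleanly isolates the place where maximality is used.

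You are also more explicit about the disconnected-$\mathcal F$ issue. The paper dispenses with it in one line (``$F$ being `connected' was not crucial to the algorithm''), whereas you carry out the $\sigma$-orbit reduction by hand: work with $\psi^{p}\vert_{F_i}$ on one component, then transport around the orbit via the component isomorphisms, observing that the resulting block-cyclic transition matrix is irreducible precisely when the single-component block is. This is a legitimate alternative and arguably a more honest account of what ``can be safely replaced with a system $\mathcal F$'' conceals. One small point worth being careful about in your write-up: the paper's definition of irreducibility quantifies over \emph{all} pairs of edges in the quotient graph, so a literal reading would make a topological representative on a multi-orbit system never irreducible; both you and the paper implicitly read irreducibility per $\sigma$-orbit (equivalently, per irreducible block of $A(\tau)$), and your parenthetical remark acknowledges this, but it deserves to be said explicitly since it is a genuine convention, not a triviality.

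One phrasing nit: you write that ``passing from a free factor system to a properly carried subsystem strictly decreases complexity,'' which reads backwards for the direction you need (you are ascending the poset, so complexity increases but remains bounded above by $c(\mathcal F)$); the intended conclusion — chains are finite, hence a maximal $\mathcal G^{+}$ exists — is of course correct.
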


\noindent See also Rylee Lyman's \cite[Theorem~A]{Lym22} for a very general version of this theorem.

\begin{proof}[Proof outline] Section~1 of~\cite{BH92} describes an algorithm that takes a topological representative of $\phi\colon F \to F$ on a free splitting of $F$ with trivial vertex stabilizers as input and finds either an irreducible train track $\tau\colon T \to T$ on a free splitting with trivial vertex stabilizers or a topological representative on a nondegenerate free splitting with nontrivial vertex stabilizers.
However, trivial vertex stabilizers are not crucial to the algorithm and it can be adapted into one that takes a topological representative on a nondegenerate free splitting with vertex stabilizers~$\mathcal G$ as input and finds either an irreducible train track on a free splitting with the same stabilizers or a topological representative on a nondegenerate free splitting whose vertex stabilizers {\it properly carry} $\mathcal G$.

Starting with an initial input of a topological representative on a nondegenerate free splitting with vertex stabilizers~$\mathcal G$, the modified algorithm can then be repeatedly applied to its own outputs until it finds an irreducible train track on some nondegenerate free splitting whose vertex stabilizers carry~$\mathcal G$.
This process will stop after at most $(c(F)-c(\mathcal G))$ repetitions. Finally, we note that $F$ being ``connected'' was not crucial to the algorithm and~$F$ can be safely replaced with a system $\mathcal F$.
\end{proof}

Let $f\colon \mathcal T \to \mathcal T$ be a topological representative. We say an immersed path $\sigma$ in $\mathcal T$ is \underline{$f$-legal} if restrictions of $f$-iterates to $\sigma$ are all injective. 
Thus, a train track is a topological representative~$\tau$  whose edges are $\tau$-legal.
Besides knowing edges (and their forward iterates) are legal, it is useful to have \underline{legal elements}, i.e.~{\it loxodromic elements} with legal {\it axes}.

\begin{prop}\label{prop - legalaxis} If $\psi\colon \mathcal F \to \mathcal F$ is an automorphism and $\tau\colon \mathcal T \to \mathcal T$ an irreducible train track for $\psi$, then %each component of $\mathcal F$ that has a $\mathcal T$-loxodromic element must have a $\mathcal T$-loxodromic element with a $\tau$-legal axis in $\mathcal T$.
each nondegenerate component of~$\mathcal T$ contains a $\tau$-legal axis of some loxodromic element of~$\mathcal F$.
\end{prop}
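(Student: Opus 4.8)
The plan is to work in a single nondegenerate component $T$ of $\mathcal{T}$ with quotient graph $\Gamma = \mathcal{F}\backslash T$ and irreducible train track map $\tau$; I want to produce a loxodromic element whose axis is $\tau$-legal. First I would recall the local structure at a vertex: the map $\tau$ induces on the set of directions (half-edges) at each vertex a map $D\tau$, and a reduced edge-path $\sigma = e_1 e_2 \cdots e_k$ is $\tau$-legal precisely when it contains no \emph{illegal turn}, i.e.\ no consecutive pair $(\bar{e}_i, e_{i+1})$ whose directions are identified by some iterate of $D\tau$. Since $\tau$ is a train track, each edge is $\tau$-legal, so the only way legality can fail along a concatenation is at the turns between consecutive edges. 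The strategy is therefore to build a closed, reduced, $\tau$-legal loop in $\Gamma$ whose turns are all legal; its lift is then a $\tau$-legal axis of the corresponding loxodromic element of $\mathcal{F}$.

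The key step is to exhibit, at every vertex of $\Gamma$, at least two distinct \emph{legal directions} (directions in the image of $D\tau^n$ for all $n$, equivalently ``fixed'' by the stabilized direction map), and then to connect them up using irreducibility. Here is where I expect the main obstacle: a priori a vertex might have only one legal direction, which would obstruct passing through it legally. The fix is to observe that the preimage of a legal direction under $D\tau$ is nonempty (forward-legal directions come from somewhere), and irreducibility guarantees that iterating $\tau$ spreads the ``turn legality'' condition around $\Gamma$; concretely, by replacing $\tau$ with a high enough iterate $\tau^N$ if necessary, $D\tau^N$ stabilizes (its image equals the set of legal directions at each vertex) and every edge's $\tau^N$-image is a long legal path crossing every edge of $\Gamma$ many times. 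I would then take such a legal path $\tau^N(e)$ crossing $e$ itself, and splice its two ends together along $e$: because $\tau^N(e)$ starts and ends with directions that are legal and the turn one forms by closing up lands between two legal directions that are genuinely distinct (the initial direction of $\tau^N(e)$ and the reverse of its terminal direction — distinct because they sit at the two endpoints of $e$, or can be arranged so after a further iterate), the resulting loop is reduced and $\tau$-legal.

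More carefully, the cleanest route: pick any edge $e$ and consider the bi-infinite legal ray obtained by iterating, $\ell = \cdots \tau^2(e_*) \tau(e_*) e_* \tau(e_*') \tau^2(e_*') \cdots$ built from the fact that $D\tau^n$ has a periodic direction at the initial vertex of $e$ (the direction set is finite, so $D\tau$ has a periodic orbit on directions at some vertex, hence by irreducibility one may route a legal line through the whole graph). Since $\Gamma$ is a finite graph, this legal line must be eventually periodic in $\Gamma$, and its periodic part projects to a closed reduced $\tau$-legal loop $\gamma$ in $\Gamma$; the element of $\mathcal{F}$ represented by $\gamma$ is loxodromic (its axis is the lift of $\gamma$, a genuine line, not a point, since $\gamma$ is reduced and nontrivial in the graph of groups) and its axis, being the lift of a $\tau$-legal loop, is $\tau$-legal. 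I would spend the bulk of the argument on the two technical points flagged above — existence of a periodic direction and using irreducibility to route a legal line through every nondegenerate component — and treat the ``eventually periodic implies closed legal loop'' and ``closed reduced loop gives loxodromic with that axis'' steps as routine.
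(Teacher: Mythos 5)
Your high-level plan---produce a closed reduced $\tau$-legal loop in the quotient graph and lift it to an axis---is the right target, but both of the specific routes you sketch have genuine gaps, and the paper sidesteps the difficulty in a much cleaner way.

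The central problem in your first route is the splice. You propose to take $\tau^N(e)$ crossing $e$ and ``splice its two ends together along $e$,'' then justify legality of the resulting loop by saying the turn formed at the closure ``lands between two legal directions that are genuinely distinct.'' This inference is false: the defining feature of an \emph{illegal} turn is a pair of \emph{distinct} directions at a vertex that are identified by some iterate of the derivative map $D\tau^n$, and it is perfectly possible for two directions that are each stably in the image of $D\tau^n$ for all $n$ to nonetheless be identified by $D\tau^k$ for some $k$. ``Both directions are legal'' and ``the turn between them is legal'' are different conditions, and you use one where you need the other. (There is also a bookkeeping issue in the parenthetical: the initial direction of $\tau^N(e)$ and the reverse of its terminal direction sit at two possibly different vertices, so there is no turn to speak of unless they coincide, which you do not arrange.) Your second route asserts that the bi-infinite legal line obtained by iterating ``must be eventually periodic in $\Gamma$.'' This is not true on the stated grounds---a bi-infinite reduced path in a finite graph need not be eventually periodic---and nothing in your sketch supplies the missing argument.

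The paper's proof avoids the turn-at-the-splice problem entirely by splicing at an \emph{interior} point of an edge, where there are no turns. After disposing of $\lambda(\tau)=1$ (where $\tau$ is a simplicial automorphism and everything is legal), one chooses an edge $e_i$ in the component; irreducibility together with $\lambda(\tau)>1$ gives two like-oriented translates $x_1\cdot e_i$ and $x_2\cdot e_i$ inside the legal path $\tau^n(e_i)$ for large $n$. The element $x_2x_1^{-1}$ sends the midpoint of $x_1\cdot e_i$ to the midpoint of $x_2\cdot e_i$, so the legal subpath of $\tau^n(e_i)$ between these midpoints is a fundamental domain for the axis of $x_2x_1^{-1}$, and since consecutive translates of this fundamental domain meet at midpoints of edges rather than at vertices, the concatenation---the whole axis---inherits legality with no extra argument about turns. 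Your proposal as written would need the additional lemma about legal turns at the closure (which, as noted, is not automatic), or else should switch to the midpoint trick.
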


\begin{proof} If $\lambda(\tau)=1$, then $\tau$ is a simplicial automorphism and all immersed paths in~$\mathcal T$ are $\tau$-legal. 
So we may assume $\lambda(\tau)>1$. 
Choose an oriented edge~$e_i$ in a component~$T_i \subset \mathcal T$. %of the quotient graph $\Gamma_* = \mathcal F \backslash \mathcal T$.
Irreducibility of train track~$\tau$ and $\lambda(\tau)>1$ imply we can find distinct translates $x_1 \cdot e_i, x_2 \cdot e_i$ in the oriented edge-path~$\tau^n(e_i)$ for some large~$n$.
The $\mathcal T$-loxodromic element~$x_2x_1^{-1}$ has a $\tau$-legal axis in~$\mathcal T$: a fundamental domain for the axis is the $\tau$-legal subpath of~$\tau^{n}(e_i)$ joining the midpoints of the chosen two translates of~$e_i$.
\end{proof}

%%%%%%%%%%%%%%%%%%%%%%%%%%%%%%%%%%%%%%%%%%%%%%%%%%%%%%%%%%
\section{Trees and real pretrees}\label{SecTrees}
%%%%%%%%%%%%%%%%%%%%%%%%%%%%%%%%%%%%%%%%%%%%%%%%%%%%%%%%%%

Expanding irreducible train tracks are very useful for understanding the dynamics of an automorphism since the iterates of edges all expand by the same factor under the eigenmetric.
Unfortunately, since the train track necessarily fails to be injective near some vertices, not all paths will similarly expand: for instance, there are so-called ``periodic Nielsen paths'' whose length (after reduction) remains uniformly bounded under iteration.

One way to get around this is to promote the expanding irreducible train track to an expanding homothety; however, this promotion requires leaving the category of simplicial trees and working in the metric category instead (Proposition~\ref{prop - topforest}).

\subsection{Trees and index theory}\label{subsec - trees.metric}
A \underline{(metric) tree} is a 0-hyperbolic geodesic metric (nonempty) space --- 0-hyperbolic means the union of any two sides of a geodesic triangle contains the third side of the triangle;
this is also known as an {\it $\mathbb R$-tree}.
%{\color{red}As before, we assume a tree contains more than one point.}
More generally, a \underline{forest} $\mathcal T$ is a disjoint union $\bigsqcup_{i \in \mathcal I} T_i$ of trees. 
Let~$T$ be a tree; a \underline{direction} at a point $p \in T$ is a component of the complement~$T \setminus \{ p \}$. 
A \underline{branch point} of $T$ is a point with at least three directions. 
%Note that a tree is {\it simplicial} if its subspace of branch points is discrete; see the next interlude section for an extended remark. 

A $\lambda$-homothety $f\colon T \to T$ is \underline{expanding} if $\lambda > 1$. 
Being expanding is invariant under iteration and composition with an isometry. 
It follows from the contraction mapping theorem that an expanding $\lambda$-homothety of a {\it complete} tree has a unique fixed point and it is a {\it repellor}. Generally, a \underline{$\lambda$-homothety} $f\colon \mathcal T \to \mathcal T$ of a forest $\mathcal T = \bigsqcup_{i \in \mathcal I} T_i$ is a disjoint union of $\lambda$-homotheties $f_i\colon T_i \to T_{\sigma \cdot i}~(i\in \mathcal I)$, where $\sigma \in \mathrm{Sym}(\mathcal I)$.

An isometry of a tree is either \underline{elliptic} if it fixes a point or \underline{loxodromic} if it has a unique minimal invariant subtree isometric to $\mathbb R$, known as the {\it axis}, on which its acts by a nontrivial translation. 
We will mostly only care about {\it isometric actions} on trees with {\it finite arc stabilizers}, i.e.~actions by isometries where the stabilizers of nondegenerate intervals are finite. 
When the acting group is torsion-free, we can equivalently say isometric actions with {\it trivial arc stabilizers}.

Suppose $d_T$ is the metric on $T$; an isometry $\iota\colon T \to T$ has \underline{translation distance} given by \[\| \iota \| = \inf_{p \in T} d_T(p, \iota(p)).\]
Any isometric $H$-action on a tree has a nonnegative function $\|\cdot \|\colon H \to \mathbb R_{\ge 0}$ given by the translation distances. 
For a group system~$\mathcal H = \bigsqcup_{i \in \mathcal I} H_i$ and a forest~$\mathcal T = \bigsqcup_{i \in \mathcal I} T_i$, an isometric $\mathcal H$-action on $\mathcal T$ is a disjoint union of isometric $H_i$-actions on~$T_i$.
%A free splitting~$(\Gamma, \mathcal G)$ of~$F$ can be turned into an tree with an isometric $F$-action by equipping it with an {\it invariant combinatorial metric}: equivariantly identify the edges with the unit interval $[0,1] \subset \mathbb R$; as we shall see in the interlude, this can make the topology coarser!

A \underline{characteristic subtree} for an isometric action on a tree~$T$ is a minimal invariant subtree in~$T$. 
An isometric action on a tree~$T$ is \underline{minimal} if~$T$ is the characteristic subtree.

\medskip
Suppose $F$ acts minimally on a nondegenerate tree~$T$ by isometries with trivial arc stabilizers. 
We now introduce the {\it index theory} for such actions.
For each $F$-orbit of points $[p] \in F \backslash T$, let $G_p \le F$ denote the stabilizer of~$p \in T$ and~$\#\mathrm{dir}[p]$ denote the number of $G_p$-orbits of directions at $p$.
The (local) {\it index} at~$[p]$~is:
\[i[p] \defeq c(G_p) - 1~ +~\#\mathrm{dir}[p] \quad \in  ~ \mathbb Z_{\ge 0} \cup \{ \infty \}.\]
The (global) \underline{index} of $F \backslash T$ is:
\[i(F \backslash T) \defeq \sum_{[p] \, \in \, F \backslash T} i[p]. \]
One of our main tools will be Gaboriau--Levitt's \underline{index inequality}~\cite[Theorem~III.2]{GL95}:
\[i(F \backslash T) < c(F).\]
For example, the index of any free splitting of~$F$ is $c(F)-1$.
In Appendix~\ref{SecIndexing}, we will sketch Gaboriau--Levitt's proof in a metric-agnostic setting. 
Corollaries of the inequality: there are finitely many $F$-orbits of directions at branch points and the point stabilizers subgroup system is represented by a system that has strictly lower complexity than~$F$.
%For completeness sake, we can extend the index definition to actions of countable free groups with infinite rank and note that the index inequality holds automatically.
For a free group system of finite type, the index of its isometric action on a forest is the sum of indices for each component and the index inequality still holds.

\subsection{Pretrees and rigid actions}\label{subsec - trees.pretrees}
%We will be constructing dendrites from {\it separable real pretrees}. 
Brian Bowditch's paper~\cite{Bow99tree} is a good survey about pretrees and other ``treelike'' structures.
A \underline{pretree} is a (nonempty) set~$T$ and a function $[\cdot,\cdot]\colon T \times T \to \mathcal P(T)$, i.e.~an association of a subset $[p,q] \subset T$ to each pair of points $p,q \in T$, that satisfies the {\it pretree axioms}: 
for all $p,q,r \in T$,
\begin{enumerate}
\item \label{axiom - symmetry} (symmetric) $[p,q] = [q,p]$ contains $\{p, q\}$;
\item \label{axiom - thin} (thin) $[p,r] \subset [p,q] \cup [q,r]$; and
\item \label{axiom - linear} (linear) if $r \in [p,q]$ and $q \in [p,r]$, then $q = r$.
\end{enumerate}
%As with most spaces, the structure $[\cdot, \cdot]$ will usually be omitted from the notation. 
The subsets $[p,q]$ from the definition will be refered to as {\it closed intervals} and should be thought of as encoding a ``betweenness'' relation on $T$. 
%For any $p,q \in T$, we 
Define the {\it open interval} $(p,q)$ to be the subset $[p,q] \setminus \{p,q\}$. 
Similarly, define {\it half-open intervals} $[p,q) = (q,p] = [p,q] \setminus \{q\}$.
Naturally, the real line $\mathbb R$ is a pretree; 
a tree has a \underline{(canonical) pretree structure} where closed intervals are the closed geodesic segments;
also, any subset~$S \subset T$ of a pretree inherits a pretree structure given by~$[\cdot,\cdot]_S \defeq [\cdot, \cdot] \cap S$.

A \underline{direction} at $p \in T$ is a maximal subset $D_p \subset T$ for which $p \notin [q,r]$ for all $q, r \in D_p$. 
As we did for trees, a \underline{branch point} for a pretree is a point with at least three directions.
The {\it observers' topology} on a pretree is the canonical topology generated by the subbasis of directions; Bowditch calls it the {\it order topology}~\cite[Section~7]{Bow99tree}.
Generally, a direction at a nonempty subset $S \subset T$ is a maximal $D_S \subset T$ for which $[q,r] \cap S = \emptyset$ for all $q,r \in D_S$.

\medskip
Suppose~$T$ and~$T'$ have pretree structures $[\cdot, \cdot]$ and $[\cdot, \cdot]'$ respectively. 
A set-bijection $f\colon T\to T'$ is a \underline{pretree-isomorphism} if $[f(p), f(q)]' = f([p,q])$ for all $p,q \in T$.
An injection $f\colon T \to T'$ is a \underline{pretree-embedding} if it is a pretree-isomorphism onto the image~$f(T)$ with the inherited pretree structure;
%{\color{red} We also define a {\it pretree-map} to be a function $f$ where $f^{-1}([p',q']') \cap [p,q]$ is a closed interval for all $p,q \in T$ and $p',q' \in [f(p),f(q)]'$;} % $[f(p), f(q)] \cap f(T) \subset f([p,q])$;
we will only need pretree-embeddings for Appendix~\ref{SecIndexing}.
Note that pretree-isomorphisms induce homeomorphisms of the observers' topologies.
There is another canonical topology finer than the observers' topology (see the interlude chapter);
we will not need either canonical topologies for the results of the paper. 

For a pretree-automorphism $f \colon T \to T$, the fixed-point set $\mathrm{Fix}_T(f)$ is the subset of points in~$T$ fixed by~$f$.
%When~$H$ is a cyclic group generated by~$f\colon T \to T$, then we write~$\mathrm{Fix}_T(f)$ instead.
A pretree-automorphism $f$ is \underline{rigid} if either $\mathrm{Fix}_T(f)$ is empty or~$f$ fixes no direction at $\mathrm{Fix}_T(f)$.
An isometry of a (subset of a) tree is a rigid pretree-automorphism of its (inherited) pretree structure.
A subset $C \subset T$ is \underline{convex} if $[p,q] \subset  C$ for all $p,q \in C$.
The fixed-point set $\mathrm{Fix}_T(f)$ of a rigid pretree-automorphism is convex.

\medskip
A pretree is \underline{real} if every closed interval $[p,q]$ is pretree-isomorphic to a closed interval of~$\mathbb R$.
The pretree structure of a tree is real.
A nondegenerate convex subset $A \subset T$ is an \underline{arc} if any $x,y,z \in A$ simultaneously lie in  some closed interval.
A pretree is \underline{complete} if every arc is an interval.
Notably, any real pretree~$T$ embeds in a canonical complete pretree~$\widehat T$ known as the \underline{pretree completion}~\cite[Lemma~7.14]{Bow99tree}.
A real pretree is \underline{short} if every arc is pretree-isomorphic to an arc in~$\mathbb R$.
The real pretree structure of a tree is short.
For any short real pretree~$T$, the pretree completion~$\widehat T$ is real~\cite[Lemma~7.15]{Bow99tree}.
For example, the pretree $\mathbb R$ is not complete (it is an arc but not an interval), but it is short by definition; 
the pretree completion of $\mathbb R$ is the {\it extended real line}, which has two additional points $\{ \pm \infty \}$, may be denoted $[-\infty, \infty]$, and is pretree-isomorphic to a closed interval of~$\mathbb R$.
The {\it long line} is the prototype of a real pretree whose pretree completion, the {\it extended long line}, is not real: it is a closed interval not pretree-isomorphic to a closed interval of~$\mathbb R$.

\medskip
A pretree-automorphism~$f\colon T \to T$ of a real pretree is \underline{elliptic} if it has a fixed point and \underline{loxodromic} otherwise;
in the latter case, there is a maximal arc~$A \subset T$, known as the {\it axis}, preserved by~$f$.
For an elliptic pretree-automorphism~$f$ of a real pretree~$T$, the complement $T \setminus \mathrm{Fix}_T(f)$ is ``open'' in the following sense: the complement $(p,q) \setminus \mathrm{Fix}_T(f)$ is a union of open intervals for any $p,q \in T$;
in particular, a direction~$d$ at $\mathrm{Fix}_T(f)$ has an {\it attaching} point $p_d \in \mathrm{Fix}_T(f)$.
If~$f$ is also rigid, then the direction~$d$ has a unique attaching point due to convexity.
In the literature, rigid pretree-automorphisms of real pretrees have been studied as {\it non-nesting homeomorphisms}~\cite{Lev98} --- Levitt's results are stated with $\mathbb R$-trees but the metrics are never used and so the results apply to pretrees as well.
Again, we mostly only care about {\it rigid actions} on real pretrees with {\it finite arc stabilizers}, i.e.~actions by rigid pretree-automorphisms where the (pointwise) stabilizers of arcs are finite.

A \underline{characteristic convex subset} for a rigid action on a real pretree~$T$ is a minimal invariant nonempty convex subset of~$T$.
A rigid action on a real pretree~$T$ is \underline{minimal} if~$T$ is the characteristic convex subset.
Note that a real pretree~$T$ that admits a minimal rigid action by a countable group must be short: $T$ will be a countable union of closed intervals; so its arcs are countable ascending unions of closed intervals and hence pretree-isomorphic to arcs in~$\mathbb R$. 
%A rigid action of a finitely generated group on a real pretree with no global fixed points will have loxodromic elements and a unique characteristic convex subset.

\medskip
Suppose we have a minimal rigid $F$-action on a nondegenerate real pretree~$T$ with trivial arc stabilizers.
Then we can define the \underline{index} of $F \backslash T$ exactly as we did for minimal isometric $F$-actions with trivial arc stabilizers. Gaboriau--Levitt's index inequality still holds since their proof extends almost verbatim to this setting of minimal rigid actions with trivial arc stabilizers (see Appendix~\ref{SecIndexing} for the sketch).

\medskip
We now introduce a new term that will be our replacement for expanding homotheties in the real pretree setting.
Let $\phi\colon F \to F$ be an automorphism and $T$ a real pretree with a chosen minimal rigid $F$-action whose arc stabilizers are trivial.
%By the index inequality, there are finitely many orbits of branch points in~$T$ and orbits of directions at branch points.
Recall that $f\colon T \to T$ is $\phi$-equivariant if $f(x \cdot p) = \phi(x) \cdot f(p)$ for all $x \in F$ and $p \in T$.
A $\phi$-equivariant pretree-automorphism $f\colon T \to T$ \underline{expands} at $p \in \mathrm{Fix}_T(f)$ if each orbit~$G_p \cdot d$ of directions at~$p$ contains some half-open interval $(p, q_d]$ and each $(p,q_d]$ properly embeds in a $G_p$-translate of $f((p, q_{d'}])$ for some orbit~$G_p \cdot d'$ of directions at~$p$.
By the index inequality, there are finitely many $G_p$-orbits of directions at~$p$;
so for some $n \ge 1$, each $(p, q_d]$ properly embeds in a $G_p$-translate of $f^n((p, q_d])$ --- this justifies the term ``expanding''.

A $\phi$-equivariant pretree-automorphism $f\colon T \to T$ is \underline{$F$-expanding} if: for any element $x \in F$ and integer $n \ge 1$, the composition $x \circ f^n \colon T \to T$ is either: 1) elliptic with a unique fixed point and it expands at this fixed point; or 2) loxodromic with an axis that is not shared with any $T$-loxodromic element in~$F$.
For the motivation behind this definition, note that $\phi$-equivariant expanding homotheties are (by the contraction mapping theorem) $F$-expanding pretree-automorphism of the canonical pretree structure.

%In what follows, we state necessary and sufficient conditions for the observers' topology on a pretree to be a dendrite. 
%A pretree is \underline{complete} if every arc is an interval; the interval may be closed, open, or half-open.
%A subset $S \subset T$ is \underline{strongly dense} in a pretree $T$ if for any pair of distinct points $p,q \in T$, the intersection $S \cap (p,q)$ is not empty.
%A pretree is \underline{separable} if it has a countable strongly dense subset.

%It follows from the definitions that a dendrite's canonical pretree structure is real, complete, and separable.
%The converse to this gives a pretree characterization of dendrites.

%\begin{thm}[cf. {\cite[Propositions~7.10 and~7.13]{Bow99tree}}]\label{thm - pretree to dendrite} A separable complete real pretree with the observers' topology is a dendrite.
%\end{thm}

%The observers' topology on the canonical pretree structure of a dendrite agrees with the original topology on the dendrite. 
%Similarly, the canonical pretree structure of the observers' topology on a separable complete pretree agrees with the  original pretree structure.
%For this paper, Theorem~\ref{thm - pretree to dendrite} will be taken as the {\it working definition} of dendrites.

%%%%%%%%%%%%%%%%%%%%%%%%%%%%%%%%%%%%%%%%%%%%%%%%%%%%%%%%%%
\nonumsec{Interlude --- same trees, different views}\label{SecTreetops}
%%%%%%%%%%%%%%%%%%%%%%%%%%%%%%%%%%%%%%%%%%%%%%%%%%%%%%%%%%

This interlude is meant to describe the different perspectives on ``simplicial trees.''
As it was borne out of my own failure to appreciate the differences before starting this project, I am writing the interlude from a personal point of view.

What is a {\it simplicial tree}? 
Figure~\ref{fig - tree} is visual representation of a simplicial tree that will be the \underline{running example} for the interlude.

\begin{figure}[ht]
 \centering 
 \includegraphics[scale=1]{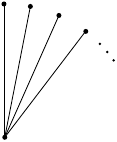}
 \caption{A tree with one ``branch point'' and (countably) infinitely many edges attached.}
 \label{fig - tree}
\end{figure}

The most elementary definition describes it as a combinatorial object.
A {\it simple graph} is a pair $(V, E)$ where~$V$ is the vertex set and the edge set~$E$ is a collection of size~$2$ subsets of~$V$. 
I could use this to define {\it reduced paths} and {\it cycles}, then a \underline{simple tree} would be a path-connected cycle-free simple graph.
For example, this was the language used by Serre in~\cite{Ser77} and it is the quickest way to define {\it free splittings}.
As a simple tree, the running example can be defined as: $V = \mathbb Z_{\ge 0}$ and $E = \{\,\{ 0, n\}~:~n \ge 1\,\}$.
This definition is unique up to a simplicial automorphism: a set-bijection of vertex sets that induces a set-bijection of the edge sets.

This perspective is especially useful if you are interested in algorithmic questions.
The downside is that it can get quite cumbersome to describe maps between simple trees that send edges to paths.
Instead, it helps to work with topological spaces where the language of maps and their deformations is already well-established.

\medskip
The first topological definition: a \underline{cellular tree} is a 1-dimensional contractible CW-complex.
This was the parenthetical definition I gave at the start of Chapter~\ref{SecPrelims}.
This definition assumes you already know what a CW-complex is.
For our purposes, a 1-dimension CW-complex is a quotient of the disjoint union of a discrete space (vertex set) and copies of closed intervals $[0,1]$ with an equivalence relation identifying each endpoint of the closed intervals with some vertex.
The complex is endowed with the quotient topology and
I will skip defining contractibility.
If you were a pedantic reader, then you may have noticed that I abused terminology while defining free splittings: I define a simplicial tree as a ``cellular tree'' yet I require that the free group act by ``simplicial automorphism'';
I never exactly explained what a simplicial automorphism of a CW-complex is!
The moral of the story is that I have prioritized brevity.
Anyway, as a cellular tree, the running example is defined as the quotient of $\{ v \} \sqcup \left( \mathbb Z_{\ge 1} \times [0,1]\right)$ with the equivalence relation generated by $v \sim (n, 0)$ for all~$n \ge 1$.
The set of equivalence classes (without the topology) will be denoted~$T$ and let $\pi$ be the quotient function onto $T$.

Now that I am dealing with topological spaces, I can discuss concepts like convergence or compactness. 
Let $x_n = (1, \frac{1}{n}), y_n = (n, \frac{1}{n}),$ and $z_n = (n, 1)$.
Then the sequence $(x_n)_{n\ge 1}$ converges to $\pi(v)$ in the CW/quotient topology, while the sets $\{ y_n \}_{n \ge 1}$ and $\{ z_n \}_{n \ge 1}$ have no limit points; moreover, this topology is not metrizable.

\medskip
The metric definition: a \underline{discrete (metric) tree} is a (metric) tree that is the convex hull of its singular points (i.e.~number of directions at the point is not 2) and the subspace of these points is discrete.
Metric spaces have a topology generated by the basis of open balls.
In practice, this topology is secondary and it is usually more convenient to work with the metric directly.
To view~$T$ as a discrete tree, equip it with the combinatorial {\it convex} metric:
\[ d_T(\pi(n,a), \pi(m,b) ) =
\begin{cases} |a-b| & \text{if } m = n \\
a + b & \text{otherwise}.
\end{cases} \]
The metric topology on $T$ is {\it strictly coarser} than the CW-topology as it contains fewer open sets.
This time, the sequences $(x_n)_{n\ge 1}$ and $(y_n)_{n\ge 1}$ both converge to $\pi(v)$ in the metric topology.
On the other hand, the set $\{ z_n \}_{n \ge 1}$ still has no limit points.
So the metric topology is not compact.

As you shall see in the next chapter, the metric setting helps us understand what an automorphism does under iteration, especially at the ``limit''. 
This starts with equipping free splittings (simple or cellular trees) with the eigenmetric of an irreducible train track and taking the {\it projective-limit} of iterating the train track to get a useful and probably nondiscrete tree.

\medskip
The final topological definition: a \underline{(separable) ``visual'' tree} is a connected subspace of a dendrite (i.e.~compact separable metric tree) that is the convex hull of its own singular points and whose branch points cannot accumulate to a branch point along one direction.
I could remove the separability condition by replacing `dendrite' with `dendron,' but that would necessitate a definition for dendrons.
There is no metric characterization for dendrons as there is for dendrites and, unfortunately,
their purely topological definition is beyond the scope of this paper; see~\cite{Bow99tree} for details.
A pretree characterization for dendrons can be: the observers' topology on a complete real pretree.
As a visual tree, the running example is the observers' topology on the cellular or metric tree $T$.
The observers' topology is even coarser than the metric topology and it makes the sequences $(x_n)_{n\ge 1}$, $(y_n)_{n\ge 1}$, and $(z_n)_{n\ge 1}$ all converge to $\pi(v)$.
In fact, $T$ with the observers' topology is compact --- it is a dendrite!
As a visual tree, the topology is metrizable and one compatible convex metric is given by 
\[ d_T'(\pi(m,a), \pi(n,b) ) =
\begin{cases} \frac{1}{m}|a-b| & \text{if } m = n \\
\frac{a}{m} + \frac{b}{n} & \text{otherwise}.
\end{cases} \]

The underlying thesis of this project is that the ``metric category'' might not be right for defining {\it blow-ups} of nondiscrete trees.
It is not clear to me that it is even possible in general! However, defining blow-ups in the ``visual category'' is rather natural.
In fact, we avoid topology altogether and carry out the construction in the ``pretree category''!

\medskip
Finally, a topological-combinatorial hybrid definition: a {\it simple pretree} is a pretree whose closed intervals are finite sets and a \underline{simple real pretree} is real pretree that is the convex hull of its singular points and these points form a simple pretree. 
The simple pretree definition is simply paraphrasing that of a simple tree, while the simple real pretree definition simultaneously captures the combinatorial nature of trees and allows topological approaches.
Simple real pretrees have two canonical topologies: the coarser one is the observers' topology; the finer one is more-or-less the CW-topology.

\medskip
So what is a simplicial tree? 
Well, it could be a simple tree, cellular tree, discrete tree, visual tree, or simple real pretree;
the answer depends on what I need it for.
For instance, if I want to discuss ``simplicial actions'' on~$\mathbb R$, then only the first two definitions are applicable --- $\mathbb R$ has no singular points!

%%%%%%%%%%%%%%%%%%%%%%%%%%%%%%%%%%%%%%%%%%%%%%%%%%%%%%%%%%
\section{Limit pretrees: exponentially growing automorphisms}\label{SecLimitsexp}
%%%%%%%%%%%%%%%%%%%%%%%%%%%%%%%%%%%%%%%%%%%%%%%%%%%%%%%%%%

Returning to free group automorphisms, we can now use translation distances in free splittings (with invariant combinatorial metrics) to classify free group automorphisms.  

Fix an automorphism $\psi\colon\mathcal F \to \mathcal F$ and a $[\psi]$-invariant proper free factor system~$\mathcal G$ of $\mathcal F$.
By Theorem~\ref{thm - findtt}, there is an irreducible train track $\tau^{(1)}$ for $\psi$ on a free splitting $(\Gamma_*^{(1)}, \mathcal F_2)$ of $\mathcal F_1 = \mathcal F$, where $\mathcal F_2$ carries $\mathcal G$. 
The train track~$\tau^{(1)}$ determines a restriction~$\left.\psi\right|_{\mathcal F_2}$;
applying the theorem repeatedly, we get a finite {\it descending sequence} of irreducible train tracks $\tau^{(i)}$ for restrictions $\left.\psi\right|_{\mathcal F_{i}}$ on free splittings $(\Gamma_*^{(i)}, \mathcal F_{i+1})$ of $\mathcal F_{i}$ and the final train track $\tau^{(k)}$ in the sequence is defined rel.~$\mathcal G$, i.e.~$\mathcal F_{k+1} = \mathcal G$. 
The next proposition collects standard facts that immediately follow from this train track theory;
we sketch the proof mainly to highlight the study of automorphisms through the {\it blow-up of a free splitting rel.~another free splitting}.

\begin{prop}\label{prop - growth} Let $\psi\colon\mathcal F \to \mathcal F$ be an automorphism, $\mathcal G$ a $[\psi]$-invariant proper free factor system of $\mathcal F$, and $(\Lambda_*, \mathcal G)$ a free splitting of~$\mathcal F$. 
For any element~$x$ in~$\mathcal F$, the limit inferior of the sequence~$\left(\sqrt[n]{\|\psi^n(x)\|_{\Lambda*}} \right)_{n \ge 1}$ is finite and independent of $(\Lambda_*, \mathcal G)$.

Let $\left(\tau^{(i)}\right)_{i=1}^k$ be a descending sequence of irreducible train tracks with~$\tau^{(k)}$ defined rel.~$\mathcal G$. Then the following are equivalent: 
\begin{enumerate}
\item \label{cond - growth.tt} $\lambda\left(\tau^{(i)}\right) = 1$ for all $i = 1, \ldots, k$;
\item \label{cond - growth.seq} for any $x \in \mathcal F$, the sequence~$\left(\|\psi^n(x)\|_{\Lambda*} \right)_{n \ge 1}$ is bounded by a %degree~$(k-1)$ 
polynomial in~$n$; and
\item \label{cond - growth.exprate} $\underset{n\to \infty}\liminf \sqrt[n]{\|\psi^n(x)\|_{\Lambda*}} \le 1$ for all $x \in \mathcal F$.
\end{enumerate}
\end{prop}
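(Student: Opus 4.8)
The plan is to establish the basic finiteness/independence claim first, and then run the cycle $(1)\Rightarrow(2)\Rightarrow(3)\Rightarrow(1)$.

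\emph{Finiteness and independence.} Fixing a basis of $\mathcal F$, there is an $\mathcal F$-equivariant Lipschitz map from the Cayley graph to $(\Lambda_*,\mathcal G)$: the trivial vertex stabilizers of the Cayley graph may be sent to points and edges to edge-paths, giving $\|x\|_{\Lambda_*}\le L\,|x|$ for all $x$. Since any automorphism expands word length at most exponentially, $\|\psi^n(x)\|_{\Lambda_*}\le LM^n|x|$ for some $M\ge 1$, so $\liminf_n\sqrt[n]{\|\psi^n(x)\|_{\Lambda_*}}\le M<\infty$. For independence (with $\mathcal G$ the fixed invariant free factor system), I would note that any two free splittings of $\mathcal F$ with the same vertex group system $\mathcal G$ admit $\mathcal F$-equivariant Lipschitz maps in both directions --- match up the unique point fixed by each conjugate of a $\mathcal G$-component and extend equivariantly over the free part --- so $\tfrac1C\|x\|_{\Lambda_*}\le\|x\|_{\Lambda_*'}\le C\|x\|_{\Lambda_*}$; the factor $\sqrt[n]{C}\to 1$ then forces the two limits inferior to agree.

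\emph{$(1)\Rightarrow(2)$.} This is the ``polynomially growing'' case and is essentially classical, but I would derive it through the blow-up construction the paper advertises. Iteratively blowing up a free splitting relative to the next one in the descending sequence $(\tau^{(i)})_{i=1}^k$ produces a single topological representative $\widehat f\colon\widehat{\mathcal T}\to\widehat{\mathcal T}$ of $\psi$ on a free splitting $(\widehat{\mathcal T},\mathcal G)$, filtered so that the $i$-th stratum is a copy of $\Gamma_*^{(i)}$; by construction $\widehat f$ sends a stratum-$i$ edge $e$ to $\rho_1\cdot e'\cdot\rho_2$, where $[\tau^{(i)}](e)=e'$ and $\rho_1,\rho_2$ lie in strictly lower strata, so the transition matrix $A(\widehat f)$ is block upper triangular with $i$-th diagonal block $A([\tau^{(i)}])$. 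If each $\lambda(\tau^{(i)})=1$, then each $\tau^{(i)}$ is a simplicial automorphism, each diagonal block is a permutation matrix, and every eigenvalue of $A(\widehat f)$ is a root of unity; passing to a power of $\widehat f$ makes $A(\widehat f)$ unipotent, so $\|A(\widehat f)^n\|=O(n^{k-1})$ and the unreduced length of $\widehat f^n$ on any edge-path is $O(n^{k-1})$. Cyclic reduction only shortens, so $\|\psi^n(x)\|_{\widehat{\mathcal T}}=O(n^{k-1})$, and by the comparison above the same polynomial bound holds in $\Lambda_*$.

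\emph{$(2)\Rightarrow(3)\Rightarrow(1)$.} The first implication is immediate: $\sqrt[n]{P(n)}\to 1$ for a polynomial $P$. For the contrapositive of $(3)\Rightarrow(1)$, suppose $\lambda\defeq\lambda(\tau^{(j)})>1$ for some $j$. Proposition~\ref{prop - legalaxis} produces a loxodromic $z\in\mathcal F_j$ with a $\tau^{(j)}$-legal axis; legality forbids cancellation under iteration, so in the eigenmetric $d_j$ of that free splitting $\|\psi^n(z)\|_{d_j}=\lambda^n\|z\|_{d_j}$ with $\|z\|_{d_j}>0$ (after replacing $z$ and $\psi^n(z)$ by the conjugates landing in $\mathcal F_j$). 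Loxodromy there means $z$ is not conjugate into $\mathcal F_{j+1}$, hence --- by malnormality of free factors and since $\mathcal G$ is carried by $\mathcal F_{j+1}$ --- not conjugate into $\mathcal G$, so $z$ is $\Lambda_*$-loxodromic. Comparing the eigenmetric with the simplicial metric, and that free splitting with the minimal invariant subtree of $\Lambda_*$ for the relevant component of $\mathcal F_j$ via an equivariant Lipschitz map (each $\mathcal G$-fixed point going to an $\mathcal F_{j+1}$-fixed point), gives $\|\psi^n(z)\|_{d_j}\le C\,\|\psi^n(z)\|_{\Lambda_*}$ for a uniform $C$. Hence $\|\psi^n(z)\|_{\Lambda_*}\ge\lambda^n\|z\|_{d_j}/C$ and $\liminf_n\sqrt[n]{\|\psi^n(z)\|_{\Lambda_*}}\ge\lambda>1$, contradicting $(3)$.

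The main obstacle is the blow-up step in $(1)\Rightarrow(2)$: checking that iterating ``blow up a free splitting relative to another free splitting'' along the descending sequence genuinely yields a topological representative on a free splitting rel $\mathcal G$ whose transition matrix has the claimed block-triangular shape. The Lipschitz comparisons of translation-length functions and the legal-axis computation are routine once that construction is available.
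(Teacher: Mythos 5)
Your proposal is correct and follows essentially the same route as the paper: finiteness and independence via equivariant Lipschitz comparisons, $(1)\Rightarrow(2)$ via the iterated blow-up of free splittings, $(2)\Rightarrow(3)$ trivially, and $(3)\Rightarrow(1)$ contrapositively by exhibiting a legal element with exponential growth in the eigenmetric and comparing metrics. The only real difference is cosmetic: in $(1)\Rightarrow(2)$ you package the stratum-by-stratum prefix/suffix growth into a block-upper-triangular transition matrix with permutation diagonal blocks and pass to a power to make it unipotent, whereas the paper runs the same estimate as a direct downward induction on the strata; your phrasing makes the $O(n^{k-1})$ bound transparent but relies on exactly the same structural fact about $\widehat f$. (You also spell out, via malnormality, why the legal element is $\Lambda_*$-loxodromic in $(3)\Rightarrow(1)$, which the paper leaves implicit in the chain of translation-length comparisons.)
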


\noindent An automorphism is \underline{polynomially growing rel.~$\mathcal G$} if these conditions hold; otherwise,
it is \underline{exponentially growing rel.~$\mathcal G$}. 
%We could (but will not) similarly define polynomially/exponentially growing automorphisms {\it relative to an invariant proper free factor system}.

\begin{proof}[Sketch of proof]
The automorphism~$\psi$ has a Lipschitz topological representative on the free splitting $(\Lambda_*, \mathcal G)$ of~$\mathcal F$.
So for any element~$x$ in $\mathcal F$, we can set
$
\underline{\lambda_x} \defeq \underset{n\to \infty}\liminf \sqrt[n]{\|\psi^n(x)\|_{\Lambda*}}$ in~$\mathbb R_{\ge 0}$.
By $[\psi]$-invariance of~$\mathcal G$, $\underline{\lambda_x} = 0$ if $\| x \|_{\Lambda_*} = 0$.
Since~$0$ is isolated in the image of~$\| \cdot \|_{\Lambda_*}$, we get $\underline{\lambda_x} \ge 1$ if $\| x \|_{\Lambda_*} > 0$.
Any pair of free splittings $(\Lambda_*, \mathcal G)$ and $(\Omega_*, \mathcal G)$ of~$\mathcal F$ have equivariant Lipschitz maps between them and so their translation distance functions are comparable, i.e.~
\[\frac{1}{K} \| \cdot \|_{\Lambda_*} \le \| \cdot \|_{\Omega_*} \le K \| \cdot \|_{\Lambda_*}  \text{\,(pointwise) for some constant } K \ge 1.\]
This implies~$\underline{\lambda_x}$ is independent of the free splitting~$(\Lambda_*, \mathcal G)$ and concludes the first part.

\medskip
$(\ref{cond - growth.tt} \implies \ref{cond - growth.seq})$: 
Suppose $\lambda(\tau^{(i)}) = 1$ for all $i = 1, \ldots, k$. 
So the train tracks $\tau^{(i)}$ are simplicial automorphisms;
%All edges in $(\Gamma_*^{(k)}, \mathcal G)$ are periodic under simplicial automorphism~$\tau^{(k)}$;
in particular, all edge-paths in $(\Omega_*^{(k)}, \mathcal G) = (\Gamma_*^{(k)}, \mathcal G)$ have constant growth.
For induction, assume all edge-paths in $(\Omega_*^{(i)}, \mathcal G)$ have at most degree $(k-i)$ polynomial growth.
Let $(\Omega_*^{(i-1)}, \mathcal G)$ be a blow-up of $(\Gamma_*^{(i-1)},\mathcal F_{i})$ with respect to $(\Omega_*^{(i)}, \mathcal G)$.
%Denote the induced topological representative by~$\kappa^{(i)}$.
As $\tau^{(i-1)}$ is a simplicial automorphism, each ``top-stratum'' edge gains a predetermined prefix and suffix in $(\Omega_*^{(i)}, \mathcal G)$ under iteration; %of $\kappa^{(i)}$.
these prefixes and suffixes have at most degree $(k-i)$ polynomial growth.
So edge-paths in $(\Omega_*^{(i-1)}, \mathcal G)$ have at most degree $(k-i+1)$ polynomial growth.
By induction and comparability again, edge-paths in $(\Lambda_*, \mathcal G)$ have at most degree $(k-1)$ polynomial growth.
%A careful analysis of what happens when one iteratively blows-up simplicial automorphisms shows that for some free splitting $(\Lambda_*, \mathcal G)$, all sequences $\left(\|\psi^n(x)\|_{\Lambda_*}\right)_{n \ge 1}$ are bounded above by a degree~$(k-1)$ polynomial, as needed. 
%Note that the irreducibility of the train tracks was irrelevant to this analysis.

$(\ref{cond - growth.seq} \implies \ref{cond - growth.exprate})$: Polynomials are sub-exponential.

$(\ref{cond - growth.exprate} \implies \ref{cond - growth.tt})$: Suppose $\lambda = \lambda\left(\tau^{(i)}\right) > 1$ for some~$i$.
%We need to show $\lambda_x^{-} > 1$ for some~$x$ in~$\mathcal F$.
Equip $(\Gamma_*^{(i)}, \mathcal F_{i+1})$ with the eigenmetric~$d_i$ and let $x \in \mathcal F_i$ be a $\tau^{(i)}$-legal element (Proposition~\ref{prop - legalaxis}).
The choice of metric and legality of~$x$ imply $\|\psi^n(x)\|_{d_i} = \lambda^n \cdot \| x \|_{d_i} > 0$.
Let~$(\Omega_*^{(1)}, \mathcal G)$ be the free splitting constructed above.
Then $\| \cdot \|_{d_i}$ and~$\| \cdot \|_{\Gamma_*^{(i)}}$ are comparable, $\|\cdot \|_{\Gamma_*^{(i)}} \le \|\cdot \|_{\Omega_*^{(i)}}$\,pointwise, and~$\| \cdot \|_{\Omega_*^{(i)}}$ is the restriction of~$\| \cdot \|_{\Omega_*^{(1)}}$.
%Choose a free splitting $(\Omega_*, \mathcal G)$ of $\mathcal G_{i+1}$ and define a {\it blow-up} of $(\Gamma_*^{(i)}, \mathcal G_{i+1})$ with respect to $(\Omega_*', \mathcal G)$. 
%The result is a free splitting $(\Omega_*'', \mathcal G)$ of~$\mathcal G_{i}$ with . 
%We now blow-up  $(\Gamma_*^{(i-1)}, \mathcal G_{i})$ with respect to $(\Omega_*'', \mathcal G)$ and iterate until we get a free splitting $(\Omega_*, \mathcal G)$ of~$\mathcal F$. 
%By construction, the function $\|\cdot\|_{\Omega_*}$ agrees with~$\|\cdot\|_{\Omega_*''}$ on~$\mathcal G_{i}$ and 
So $\underline{\lambda_x} \ge \lambda > 1$.
%
%\medskip
%To wrap up the first and final parts of the proposition, we need to show $\lambda_x^- = \lambda_x^+$ and then characterize when $\lambda_x \defeq \lambda_x^\pm = 1$. If $\lambda(\tau^{(k)}) = 1$, then we automatically get all elements~$x$ in~$\mathcal G_{k}$ have constant/bounded growth and $\lambda_x = 1$.
%If $\lambda(\tau^{(k)}) > 1$, then by the train track property, the number of maximal $\tau^{(k)}$-legal subpaths in the forward iterates (and tightenings) of an edge-path (or closed loop) is uniformly bounded (i.e.~depends only on the edge-path). Bounded cancellation and the eigenmetric imply edge-paths have either:
%\begin{enumerate}
%\item constant growth if, under iteration, the lengths of maximal $\tau^{(k)}$-legal subpaths are uniformly bounded by some {\it critical constant}, or
%\item exponential growth if, under iteration, some $\tau^{(k)}$-legal subpath has length greater than the critical constant.
%\end{enumerate}
%So $\lambda_x^- = \lambda_x^+$ for all elements~$x$ in~$\mathcal G_{k}$ and $\lambda_x = 1$ if and only if the element~$x$ has constant ($\mathcal G$-relative) growth~\cite[Remark~1.8]{BH92}. This establishes the base case~$i = k$.
%
%The induction step is left as an exercise (see Appendix~\ref{SecSolns} for a \hyperlink{prop - growth}{sketch}).
\end{proof}

%Fix an automorphism $\psi\colon\mathcal F \to \mathcal F$ and a $[\psi]$-invariant proper free factor system~$\mathcal G$. 
For any element~$x$ in~$\mathcal F$, define $\underline \lambda(x; \psi, \mathcal G) \defeq \underset{n\to \infty}\liminf \sqrt[n]{\|\psi^n(x)\|_{\mathcal G}}$. 
We say $x \in \mathcal F$ grows \underline{exponentially rel.~$\mathcal G$} if $\underline \lambda(x;\psi,\mathcal G) > 1$; otherwise, it grows \underline{polynomially rel.~$\mathcal G$}.
The ``rel.~$\mathcal G$'' will be omitted when~$\mathcal G$ is empty.
Suppose~$\mathcal H$ is a $[\psi]$-invariant subgroup system of finite type that carries~$\mathcal G$.
By passing to the characteristic subforest for~$\mathcal H$ in a free splitting $(\Gamma_*,\mathcal G)$ of~$\mathcal F$, we see that $\underline \lambda(x;\left.\phi\right|_{\mathcal H},\mathcal G) = \underline \lambda(x;\psi,\mathcal G)$ for elements~$x$ in~$\mathcal H$.
We use this observation whenever we pass to invariant subgroup systems of finite type.

\begin{rmk}
It was known since the introduction of train tracks that the first sequence in the statement of Proposition~\ref{prop - growth} converges (see~\cite[Remark~1.8]{BH92}).
%We will not be using this fact.
%Using the limit inferior to define growth types avoids this minor gap in the literature.
Gilbert Levitt gave a finer classification for an element's growth rates~\cite[Theorem 6.2]{Lev09}.
\end{rmk}

The next proposition appears in Levitt--Lustig's paper~\cite[Proposition~3.2]{LL00}. 
Our proof takes a slightly different approach using the {\it blow-up of a free splitting rel.~expanding forest}.
This is done to highlight ideas crucial to the proof of the main theorem.

\begin{prop}\label{prop - topforest} 
Let $\psi\colon\mathcal F \to \mathcal F$ be an automorphism and $\mathcal G$ a $[\psi]$-invariant proper free factor system of $\mathcal F$. 
If $\psi\colon \mathcal F \to \mathcal F$ is exponentially growing rel.~$\mathcal G$, then there is:
a minimal isometric $\mathcal F$-action on a nondegenerate forest $\mathcal Y$ with trivial arc stabilizers for which~$\mathcal G$ is elliptic; and a $\psi$-equivariant expanding homothety $h\colon \mathcal Y \to \mathcal Y$.

In particular, $\mathcal Y$-loxodromic elements in~$\mathcal F$ grow exponentially rel.~$\mathcal G$.
\end{prop}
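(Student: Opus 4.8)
The plan is to produce $\mathcal Y$ as a projective limit of the free splittings $(\Gamma_*^{(i)}, \mathcal F_{i+1})$ equipped with their eigenmetrics, iterated under the train tracks $\tau^{(i)}$, and then assemble these into a single forest by an equivariant blow-up along the descending sequence. Concretely, I would first recall from Proposition~\ref{prop - growth} that exponential growth rel.~$\mathcal G$ forces $\lambda\bigl(\tau^{(i)}\bigr) > 1$ for at least one index $i$ in the descending sequence of irreducible train tracks; the key construction takes place in the strata where this stretch factor exceeds~$1$. For each such stratum, equip the free splitting $(\Gamma_*^{(i)}, \mathcal F_{i+1})$ with the eigenmetric $d_i$, so that (after an equivariant isotopy) $\tau^{(i)}$ restricts to a $\lambda_i$-homothety on every edge with $\lambda_i = \lambda(\tau^{(i)}) > 1$. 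Rescaling by $\lambda_i^{-1}$ after each iteration and passing to the direct limit of the resulting sequence of rescaled metric spaces (using that the maps are $\lambda_i$-Lipschitz so the metrics are comparable), one obtains a forest $\mathcal Y_i$ on which $\mathcal F_i$ acts by isometries with trivial arc stabilizers, and $\tau^{(i)}$ induces a $\psi|_{\mathcal F_i}$-equivariant expanding homothety $h_i\colon \mathcal Y_i \to \mathcal Y_i$ with factor $\lambda_i$.

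Next I would handle the strata where $\lambda(\tau^{(i)}) = 1$: there $\tau^{(i)}$ is a simplicial automorphism, so I collapse that stratum's contribution to a point (the induced forest is degenerate) and only record the nontrivial vertex-stabilizer information. The full forest $\mathcal Y$ is then built by a finite sequence of blow-ups: starting from the deepest nondegenerate $\mathcal Y_i$, one blows up the free splitting $(\Gamma_*^{(i-1)}, \mathcal F_i)$ with respect to $\mathcal Y_i$ at the vertices carrying $\mathcal F_i$, metrizing the top stratum by its own eigenmetric when $\lambda(\tau^{(i-1)}) > 1$ (and collapsing it otherwise), and iterating upward. At each stage the homothety on the top stratum and the induced homothety on the blown-up part must be reconciled; this is exactly where one needs the stretch factors to match up or to be corrected by rescaling, and the contraction mapping theorem (cf.~the proof outline for the main theorem) guarantees that the iterated blow-up converges to an $\mathcal F$-forest $\mathcal Y$ with a $\psi$-equivariant expanding homothety $h\colon \mathcal Y \to \mathcal Y$. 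Minimality is arranged by passing to the characteristic subforest, triviality of arc stabilizers is inherited from the eigenmetric construction on each stratum (edges have trivial stabilizers in a free splitting, and legal segments have trivial stabilizers in the limit), and $\mathcal G$ is elliptic because it is carried by vertex stabilizers at every stage of the blow-up, hence fixes points in $\mathcal Y$.

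Finally, the stated consequence is immediate once $\mathcal Y$ and $h$ exist: if $x \in \mathcal F$ is $\mathcal Y$-loxodromic with axis $A_x$ and translation length $\ell = \|x\|_{\mathcal Y} > 0$, then $h$-equivariance gives $\|\psi^n(x)\|_{\mathcal Y} = h^n(\ell)$, and since $h$ is an expanding homothety on each subforest orbit with factors bounded below by some $\lambda > 1$, one gets $\|\psi^n(x)\|_{\mathcal Y} \ge C\lambda^n \ell$ for large $n$; comparing $\|\cdot\|_{\mathcal Y}$ with $\|\cdot\|_{\Lambda_*}$ for a free splitting $(\Lambda_*, \mathcal G)$ via an equivariant Lipschitz map (so that $\|\cdot\|_{\Lambda_*} \ge \frac{1}{K}\|\cdot\|_{\mathcal Y}$ on the subforest where $x$ acts), one concludes $\underline\lambda(x;\psi,\mathcal G) \ge \lambda > 1$, i.e.~$x$ grows exponentially rel.~$\mathcal G$. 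The main obstacle I anticipate is the blow-up/reconciliation step: making precise how an isometric action on a limit forest can be equivariantly blown up along a free splitting while controlling the metric and the induced homothety, and verifying that the resulting map is still an expanding homothety with trivial arc stabilizers rather than merely a homothety. This is the technical heart flagged in the introduction, and getting the metrics on the different strata to be simultaneously $h$-invariant is what forces the appeal to the contraction mapping theorem.
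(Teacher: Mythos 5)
Your overall architecture (projective limit on an exponential stratum, then an equivariant blow-up upward, then a Lipschitz comparison for the last clause) is the right shape, and your final paragraph deriving exponential growth of $\mathcal Y$-loxodromics from the homothety and a Lipschitz map to $(\Lambda_*,\mathcal G)$ is essentially the paper's argument. But there is a real gap in the middle.

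The proposition asks for a \emph{single} $\psi$-equivariant expanding homothety $h\colon\mathcal Y\to\mathcal Y$, with one factor $\lambda>1$. The paper's key move, which your proposal misses, is to \emph{truncate} the descending sequence of train tracks at the first index $k$ with $\lambda(\tau^{(k)})>1$, so that $\lambda(\tau^{(i)})=1$ for all $i<k$. The only exponential stratum that enters the construction is $\tau^{(k)}$; it alone gives a projective-limit forest $\mathcal T^{(k)}$ with $\lambda$-homothety $h^{(k)}$ (and note that trivial arc stabilizers here need Lustig's theorem on top of Culler--Morgan, not just ``legality''). Every higher stratum is a simplicial automorphism, which is exactly the hypothesis of the simple patchwork Theorem~\ref{thm - simple patch}: the blow-up there requires $\tau^{(i-1)}$ to be a simplicial automorphism, and it preserves the homothety factor $\lambda$ precisely because the simplicial top stratum contributes an \emph{isometry}, not a homothety.

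Your proposal instead tries to ``metrize the top stratum by its own eigenmetric when $\lambda(\tau^{(i-1)})>1$'' and to ``reconcile'' the two homotheties by rescaling. That fails: distinct exponential strata generally have distinct Perron--Frobenius eigenvalues $\lambda_{i-1}\neq\lambda_i$, and rescaling a metric by a constant cannot change a homothety's multiplicative factor; the stitched map would not be a homothety at all. This is not a technicality the contraction-mapping step cures --- the contraction mapping theorem is used (inside Theorem~\ref{thm - simple patch}) to pick the attaching points once the factors already agree, which they only do when all strata above the chosen one are simplicial. Relatedly, you seem to be aiming for a forest that sees \emph{all} exponential growth, but Proposition~\ref{prop - topforest} only asserts the one-way implication ($\mathcal Y$-loxodromic $\Rightarrow$ exponential); capturing all exponentially growing elements is what requires iterating into point stabilizers and leaving the metric world for pretrees in Theorem~\ref{thm - limpretrees}. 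So the fix is: truncate first, use one exponential stratum, and invoke simple patchwork with simplicial top strata only.
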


\begin{proof}[Proof (modulo a black box)]
Suppose $\psi\colon\mathcal F \to \mathcal F$ is exponentially growing rel.~$\mathcal G$ and set $\mathcal F_1 = \mathcal F$. 
%By Proposition~\ref{prop - growth}$(\ref{cond - growth.tt}{\Rightarrow}\ref{cond - growth.exprate})$ (with the $[\psi]$-invariant system~$\mathcal G$), 
Then there is a finite sequence of irreducible train tracks~$\tau^{(i)}$ for restrictions $\left.\psi\right|_{\mathcal F_{i}}$ on nondegenerate free splittings $(\Gamma_*^{(i)}, \mathcal F_{i+1})$ of~$\mathcal F_{i}$ with $\lambda(\tau^{(i)}) > 1$ for some $i \ge 1$; 
we equip the free splittings with eigenmetrics~$d_i$.
After truncating the sequence if necessary, assume $\lambda(\tau^{(i)}) = 1$ for $i = 1, \ldots, k-1$ and $\lambda = \lambda(\tau^{(k)}) > 1$.
%{\color{red}and $\left.\psi\right|_{\mathcal G_{i-1}}$ is exponentially growing for $i = 1, \ldots, k$.} 
By Proposition~\ref{prop - legalaxis}, %each component $G_{k,v}$ of~$\mathcal G_{k}$ that has a $(\Gamma_*^{(k)}, \mathcal G_{k+1})$-loxodromic element must have a loxodromic element $x_{k,v}$ with a $\tau^{(k)}$-legal axis in $(\Gamma_*^{(k)}, \mathcal G_{k+1})$.
%each nondegenerate component of $(\Gamma_*^{(k)}, \mathcal G_{k+1})$ contains a $\tau^{(k)}$-legal axis of a loxodromic element~$x_{k,v}$ in~$\mathcal G_k$.
there is a $\tau^{(k)}$-legal element~$x_0$ in~$\mathcal F_k$.

As~$\tau^{(k)}$ is $\lambda$-Lipschitz, we can define a limit function 
\[\|\cdot \|^{(k)}\colon\mathcal F_{k} \to \mathbb R_{\ge 0} \quad \text{by} \quad x \mapsto \underset{n \to \infty} \lim \lambda^{-n} \|\psi^n(x)\|_{\Gamma_*^{(k)}}.\]
%where $\|x\|_{\Gamma_*^{(k)}} \text{ is the translation distance of } x \text{ in } (\Gamma_*^{(k)}, \mathcal G)$.
Culler--Morgan proved $\|\cdot\|^{(k)}$ is a translation distance function for a minimal isometric $\mathcal F_{k}$-action on a forest $\mathcal T^{(k)}$ with {\it cyclic arc stabilizers}~\cite[Theorem~5.3]{CM87};
the forest is not degenerate as $\|x_0\|^{(k)} = \|x_0\|_{\Gamma_v^{(k)}} > 0$.
Lustig proved the $\mathcal F_{k}$-action on~$\mathcal T^{(k)}$ has trivial arc stabilizers~\cite[Appendix]{GLL98}. 
There is a $\left.\psi\right|_{\mathcal F_{k}}$-equivariant $\lambda$-homothety $h^{(k)}\colon \mathcal T^{(k)} \to \mathcal T^{(k)}$ since $\|\psi(x) \|^{(k)} = \lambda \|x\|^{(k)}$ for all elements~$x$ in~$\mathcal F_{k}$~\cite[Theorem~3.7]{CM87}.
By construction, the $[\psi]$-invariant proper free factor system~$\mathcal G$ is $\mathcal T^{(k)}$-elliptic.

%If $\|x\|^{(k)}>0$, then $\sqrt[n]{\|\psi^n(x)\|_{\Gamma_*^{(k)}}} \to \lambda > 1$ as $n \to \infty$, and $\lambda(x,\psi,\mathcal G_k) = \lambda$ by Proposition~\ref{prop - growth} with the $[\psi]$-invariant system~$\mathcal G_k$. So $\mathcal T^{(k)}$-loxodromic elements in~$\mathcal G_{k-1}$ grow exponentially since $\lambda(x,\psi,\emptyset) \ge \lambda(x,\psi,\mathcal G_k)$.
\medskip

The main step of the proof is a construction that allows us to ``merge'' the higher {\it polynomial strata} and the {\it exponential stratum} at $k$. 
If $k = 1$, then set $\mathcal Y = \mathcal T^{(k)}$, $h = h^{(k)}$, and we are done. 
Otherwise, $k \ge 2$ and, for some $i \le k$, there is:
a minimal isometric $\mathcal F_{i}$-action on a nondegenerate forest~$\mathcal T^{(i)}$ with trivial arc stabilizers and an equivariant copy of~$\mathcal T^{(k)}$; and
a $\left.\psi\right|_{\mathcal F_{i}}$-equivariant $\lambda$-homothety $h^{(i)}\colon \mathcal T^{(i)} \to \mathcal T^{(i)}$.

In the next chapter (simple patchwork~\ref{thm - simple patch}, black box), we construct a unique forest~$\mathcal T^{(i-1)}$ that is ``an equivariant blow-up and simplicial collapse'' of $(\Gamma_*^{(i-1)}, \mathcal F_{i})$ with respect to $\tau^{(i-1)}$ and~$h^{(i)}$. In particular, there is: a minimal isometric $\mathcal F_{i-1}$-action on $\mathcal T^{(i-1)}$ with trivial arc stabilizers and an equivariant copy of~$\mathcal T^{(i)}$; and
a $\left.\psi\right|_{\mathcal F_{i-1}}$-equivariant $\lambda$-homothety $h^{(i-1)}$ on~$\mathcal T^{(i-1)}$ induced by $\tau^{(i-1)}$ and~$h^{(i)}$.
By induction, we have: a minimal isometric $\mathcal F$-action on a forest $\mathcal Y = \mathcal T^{(1)}$ with trivial arc stabilizers, an equivariant copy of~$\mathcal T^{(k)}$, and a $\psi$-equivariant $\lambda$-homothety~$h=h^{(1)}$ on~$\mathcal Y$.

\medskip
For the last part of the proposition, choose a free splitting $(\Lambda_*, \mathcal G)$ for $\mathcal F$. 
Let $\|\cdot\|_{\Lambda_*}$ and $\| \cdot \|_{\mathcal Y}$ be the translation distance functions for $(\Lambda_*, \mathcal G)$ and~$\mathcal Y$ respectively.
Since~$\mathcal G$ is $\mathcal Y$-elliptic, there is an equivariant Lipschitz map $(\Lambda_*, \mathcal G) \to \mathcal Y$ and \[ \|\cdot \|_{\mathcal Y} \le K \| \cdot \|_{\Lambda_*} \text{\,pointwise for some constant } K \ge 1.\]
Therefore, $\lambda^n \| \cdot \|_{\mathcal Y} = \|\psi^n(\cdot)\|_{\mathcal Y} \le K \|\psi^n(\cdot)\|_{\Lambda_*}$. 
So $\mathcal Y$-loxodromic elements in~$\mathcal F$ grow exponentially rel.~$\mathcal G$.
\end{proof}

The main theorem of this paper associates to a free group automorphism a real pretree with a minimal rigid $F$-action whose loxodromic elements are precisely the elements that grow exponentially. 
This time we need a more delicate idea than the ones highlighted in the proofs of Propositions~\ref{prop - growth} and~\ref{prop - topforest}: the {\it blow-up of an expanding real pretree rel. an expanding forest}.

\begin{thm}\label{thm - limpretrees} 
If $\phi\colon F \to F$ is an automorphism and~$\mathcal G$ a $[\phi]$-invariant proper free factor system of~$F$, then there is:
\begin{enumerate}
\item a minimal rigid $F$-action on a real pretree~${T}$ with trivial arc stabilizers;
%\item {\color{red}a dendrite~$\widehat{T}$ that is an equivariant quotient of~$\partial F$ with $i(F \backslash \widehat T), i_{Q}(F \backslash \widehat T) < c(F)$;}
\item a $\phi$-equivariant $F$-expanding pretree-automorphism $f\colon {T} \to {T}$; and
\item \label{cond - limpretrees.filter} an element in $F$ is ${T}$-loxodromic if and only if it grows exponentially rel.~$\mathcal G$.
\end{enumerate}
The $T$-point stabilizers give a canonical $[\phi]$-invariant subgroup system~$\mathcal H$ of finite type and any restriction~$\left.\phi\right|_{\mathcal H}$ is polynomially growing rel.~$\mathcal G$ (if~$\mathcal H$ is not empty). 
This system has strictly lower complexity than~$F$ if and only if~$\phi$ is exponentially growing rel.~$\mathcal G$.
%Moreover,~$\widehat{T}$ is degenerate if and only if~$\phi$ is polynomially growing.
\end{thm}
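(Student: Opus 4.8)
The plan is to prove Theorem~\ref{thm - limpretrees} by induction on the complexity $c(\mathcal F)$, following the proof outline: first handle the exponentially growing case using the limit forest $\mathcal Y$ from Proposition~\ref{prop - topforest}, then recurse into the point stabilizers, and finally blow up the resulting hierarchy of trees into a single real pretree. First I would dispose of the base case: if $\phi$ is polynomially growing rel.~$\mathcal G$, take $T$ to be a point (a degenerate real pretree), $f$ the identity, $\mathcal H = F$ (a single component), so every element is $T$-elliptic and grows polynomially rel.~$\mathcal G$ by Proposition~\ref{prop - growth}; here $\mathcal H$ does \emph{not} have strictly lower complexity, matching the ``if and only if''. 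For the inductive step, assume $\phi$ is exponentially growing rel.~$\mathcal G$. Apply Proposition~\ref{prop - topforest} to get a minimal isometric $F$-action on a nondegenerate forest $\mathcal Y$ with trivial arc stabilizers, $\mathcal G$ elliptic, and a $\phi$-equivariant expanding homothety $h\colon \mathcal Y \to \mathcal Y$. The $\mathcal Y$-point stabilizers form a subgroup system $\mathcal H'$ which, by the Gaboriau--Levitt index inequality, is represented by a system of strictly lower complexity than $F$; moreover $h$ permutes these stabilizers (up to conjugacy), so $\mathcal H'$ is $[\phi]$-invariant and carries $\mathcal G$ (since $\mathcal G$ is $\mathcal Y$-elliptic). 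By the observation following Proposition~\ref{prop - growth}, a restriction $\left.\phi\right|_{\mathcal H'}$ has the same growth rates rel.~$\mathcal G$ as $\phi$ does; in particular, each component of $\mathcal H'$ is still polynomially-or-exponentially growing rel.~$\mathcal G$, and by Proposition~\ref{prop - growth} applied componentwise we may split $\mathcal H'$ and only recurse on the components where $\left.\phi\right|_{\mathcal H'}$ is exponentially growing rel.~$\mathcal G$.

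Next I would invoke the inductive hypothesis on each component of $\mathcal H'$ (the exponentially growing ones) to obtain, for each, a minimal rigid action on a real pretree with trivial arc stabilizers, an equivariant $F$-expanding pretree-automorphism, and loxodromic elements precisely the exponentially-growing-rel.-$\mathcal G$ ones. Assembling these over the orbit gives a rigid action of $\mathcal H'$ on a real pretree-forest $\mathcal S$, equivariant with respect to $\left.\phi\right|_{\mathcal H'}$. Now comes the key step flagged in the outline: the \emph{blow-up of an expanding real pretree rel.~an expanding forest}. At each $\mathcal Y$-branch point $p$ with nontrivial stabilizer $G_p$ a component of $\mathcal H'$, I would equivariantly insert the corresponding real pretree from $\mathcal S$ in place of $p$, gluing the directions at $p$ to points of the inserted pretree. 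This produces a real pretree $T$ carrying both $\mathcal Y$ (after a simplicial-type collapse) and the inserted pretrees, with a minimal rigid $F$-action; trivial arc stabilizers follow because the arc stabilizers in $\mathcal Y$ are trivial and those in each $\mathcal S$-component are trivial, and an arc of $T$ either maps to a nondegenerate arc of $\mathcal Y$ or lies inside one inserted pretree. The $\phi$-equivariant $F$-expanding pretree-automorphism $f$ is built from $h$ on the $\mathcal Y$-part and the $F$-expanding automorphisms on the inserted pretrees; verifying it is $F$-expanding (every $x\circ f^n$ is either elliptic-expanding-at-its-unique-fixed-point or loxodromic with unshared axis) is where the hypotheses on $h$ (an honest expanding homothety, so the contraction mapping theorem applies on completions) and on the inserted automorphisms combine. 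The point stabilizers of $T$ are exactly the point stabilizers of the inserted pretrees, which by induction are the maximal polynomially-growing-rel.-$\mathcal G$ pieces, so $\mathcal H$ (the $T$-point stabilizers) is $[\phi]$-invariant, of finite type, and $\left.\phi\right|_{\mathcal H}$ is polynomially growing rel.~$\mathcal G$; condition~(\ref{cond - limpretrees.filter}) holds because $x$ is $T$-loxodromic iff it is $\mathcal Y$-loxodromic or it is $\mathcal Y$-elliptic but $\mathcal S$-loxodromic in its stabilizer, and by Proposition~\ref{prop - topforest} and induction this is exactly ``$x$ grows exponentially rel.~$\mathcal G$''.

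Finally, for the complexity claim: $\mathcal H$ is carried by $\mathcal H'$, which by the index inequality has complexity strictly less than $c(F)$; so in the exponentially growing case $c(\mathcal H) < c(F)$ and, since $\mathcal H$ is carried by a free factor system of lower complexity, its components are proper subgroups of rank $< \mathrm{rank}(F)$. Conversely, if $\phi$ is polynomially growing rel.~$\mathcal G$ then $T$ is a point and $\mathcal H = F$, so equality holds; this gives the ``if and only if''. The hard part will be the blow-up construction and the verification that the glued map $f$ is genuinely $F$-expanding --- in particular, that no new ``bad'' fixed directions or shared axes are created at the gluing interfaces, and that the attaching points behave well under $f$. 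This is exactly the ``details get a bit technical'' step the introduction warns about, and it is deferred to the ``simple patchwork'' / blow-up machinery of the next chapter; the role of the contraction mapping theorem is to guarantee the blow-up can be performed compatibly with the hierarchy of expanding homotheties so that the resulting pretree-automorphism inherits the expansion property at every fixed point.
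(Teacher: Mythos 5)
Your overall strategy is right in spirit, but the inductive structure inverts the paper's order in a way that the blow-up machinery cannot support, and this creates a genuine gap at the ``key step.''

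The paper does \emph{not} recurse into point stabilizers to get full limit pretrees and then insert them. Instead it first produces the entire hierarchy of \emph{metric} forests $\mathcal Y^{(1)}, \mathcal Y^{(2)}, \ldots$ (each a Levitt--Lustig limit forest with an expanding $\lambda$-homothety $h^{(i)}$) by applying Proposition~\ref{prop - topforest} repeatedly, and only then assembles the limit pretree by iterating ideal stitching: set $T^{(1)} = Y^{(1)}$, then blow up the \emph{pretree} $T^{(i-1)}$ by inserting the \emph{metric forest} $\mathcal Y^{(i)}$ at its branch points. At every stage the inner object is a metric forest with an honest expanding homothety. This is exactly what Theorem~\ref{thm - ideal stitch} assumes: its hypotheses require ``a minimal \emph{isometric} $\mathcal G$-action on a forest $\mathcal Y_{\mathcal V}$'' and ``a $\left.\phi\right|_{\mathcal G}$-equivariant expanding \emph{homothety} $h_{\mathcal V}$.'' The contraction mapping theorem on the metric completion $\overline{\mathcal Y}_{\mathcal V}$ is what produces the unique attaching points $\vec p$ solving $h_v'(p_d) = s_{\partial\cdot d}\cdot p_{\partial\cdot d}$, and, later in that proof, what guarantees the composition $\gamma_v = s_v\circ h_{\beta^{n-1}\cdot v}\circ\cdots\circ h_v$ has a unique repelling fixed point $q_v\in\overline Y_v$ — the crux of verifying that $f^*$ is $F$-expanding.

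Your proposal, by contrast, recurses through the inductive hypothesis of the \emph{theorem itself} to obtain a real pretree $\mathcal S$ for the point-stabilizer system $\mathcal H'$, and then attempts a single blow-up that inserts $\mathcal S$ into the metric tree $\mathcal Y$. But $\mathcal S$ is a non-metric real pretree carrying only an $F$-expanding pretree-automorphism, not a metric forest with an expanding homothety, so the hypotheses of Theorem~\ref{thm - ideal stitch} are not met. Concretely, there is no contraction mapping theorem available on $\mathcal S$: the relevant compositions $s\circ g^k$ (with $g$ the $F$-expanding automorphism on $\mathcal S$) may be loxodromic, in which case they have no fixed point in $\mathcal S$ or in its metric completion (which does not exist), and even their ``ends'' in the pretree completion $\widehat{\mathcal S}$ may not satisfy the non-fixed-by-loxodromics condition of Proposition~\ref{prop - naive stitching}. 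So the step you defer to ``the blow-up machinery of the next chapter'' is in fact not covered by that machinery; you would need a strictly more general stitching theorem. The paper's ordering — always insert a metric forest into a pretree, never the other way around — is precisely what keeps each blow-up within reach of the contraction mapping theorem. (Also a small point: the ``simplicial-type collapse'' you mention belongs to simple patchwork in Theorem~\ref{thm - simple patch}, not to ideal stitching; there is no collapse in the pretree blow-up.)

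The fix is to reorganize your induction so it follows the paper: produce the full descending chain of metric forests $\mathcal Y^{(1)},\dots,\mathcal Y^{(k)}$ and homotheties $h^{(i)}$ \emph{first}, and then stitch outward-in, feeding the previously built pretree $T^{(i-1)}$ and its $F$-expanding map into Theorem~\ref{thm - ideal stitch} together with the next metric layer $(\mathcal Y^{(i)}, h^{(i)})$.
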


\noindent A \underline{(forward) limit pretree} of a free group automorphism is a real pretree satisfying the theorem's conclusion.

\begin{proof}[Proof (modulo a black box)]
If~$\phi$ is polynomially growing rel.~$\mathcal G$, then let~${T}$ be the degenerate real pretree (i.e.~a singleton) with a trivial $F$-action; 
Conditions~1-3 hold automatically.
%In this case, $F$ itself is a point stabilizer.
For the rest of the proof, we assume that~$\phi$ is exponentially growing rel.~$\mathcal G$. 

Let~$Y^{(1)}$ be a nondegenerate tree for~$\phi$ given by Proposition~\ref{prop - topforest}.
By Gaboriau--Levitt's index inequality,~$i(F \backslash Y^{(1)}) < c(F)$, we can define~$\mathcal V$ to be a finite set of representatives for the $F$-orbits of branch points~$v$ with nontrivial stabilizers~$G_v$. 
The index inequality implies the subgroup system $\mathcal G^{(2)} = \bigsqcup_{v \in \mathcal V} G_v$ has complexity~$c(\mathcal G^{(2)}) < c(F)$ as the tree~$Y^{(1)}$ is not degenerate and the $F$-action is minimal.
The system is $[\phi]$-invariant and has a restriction automorphism~$\left.\phi\right|_{\mathcal G^{(2)}}\colon\mathcal G^{(2)} \to \mathcal G^{(2)}$ that is unique up to post-composition with inner automorphisms of $\mathcal G^{(2)}$-components --- just as we showed for free splittings in Chapter~\ref{SecPrelims}.
If~$\mathcal G^{(2)}$ is not empty and the restriction~$\left.\phi\right|_{\mathcal G^{(2)}}$ is exponentially growing rel.~$\mathcal G$, then we can repeatedly apply Proposition~\ref{prop - topforest} to the restrictions.

The complexities of the point stabilizer systems are strictly descending and, in the end, we get a finite sequence of nondegenerate forests~$\mathcal Y^{(i)}$, minimal isometric $\mathcal G^{(i)}$-actions with trivial arc stabilizers, and~$\left.\phi\right|_{\mathcal G^{(i)}}$-equivariant expanding homotheties $h^{(i)} \colon \mathcal Y^{(i)} \to \mathcal Y^{(i)}$. 
An element in~$F$ grows exponentially rel.~$\mathcal G$ if and only if it is conjugate to a  $\mathcal Y^{(i)}$-loxodromic element in some system~$\mathcal G^{(i)}$.
An almost identical argument is in the preliminaries of~\cite{Lev09}.

\medskip
Set ${T}^{(1)} \defeq {Y}^{(1)}$ with its canonical pretree structure and $f^{(1)} \defeq h^{(1)}$.
%The $F$-action on~$T^{(1)}$ is a minimal rigid action with trivial arc stabilizers, the $\phi$-equivariant pretree-automorphism~$f^{(1)}=h^{(1)}$ is $F$-expanding ($h^{(1)}$ is an expanding homothety), and the ${T}^{(1)}$-point stabilizers are represented by~$\mathcal G^{(2)}$.
%More generally, assume 
So there is a minimal rigid $F$-action on a real pretree~${T}^{(i-1)}$ with trivial arc stabilizers (for some $i\ge 2$), a $\phi$-equivariant $F$-expanding pretree-automorphism~$f^{(i-1)}$, and the ${T}^{(i-1)}$-point stabilizers are represented by~$\mathcal G^{(i)}$.

The novel construction in the paper (ideal stitching~\ref{thm - ideal stitch}, black box) defines a unique real pretree~${T}^{(i)}$ that is an equivariant blow-up of~$T^{(i-1)}$ with respect to~$f^{(i-1)}$ and~$h^{(i)}$. 
In particular, there is:

\begin{enumerate}
\item a minimal rigid $F$-action on~${T}^{(i)}$ with trivial arc stabilizers;
\item a $\phi$-equivariant $F$-expanding pretree-automorphism $f^{(i)} \colon {T}^{(i)} \to {T}^{(i)}$; and
\item $x \in F$ is ${T}^{(i)}$-loxodromic if and only if it is ${\mathcal Y}^{(j)}$-loxodromic for some $j \le i$.
\end{enumerate}
By induction, we may assume we have: a minimal rigid $F$-action on a real pretree~$T$ with trivial arc stabilizers; a $\phi$-equivariant $F$-expanding pretree-automorphism $f\colon T \to T$; and an element of~$F$ is $T$-loxodromic if and only if it is loxodromic in some forest~$\mathcal Y^{(i)}$.
By construction of the sequence of forests, the last condition translates to: an element in $F$ is ${T}$-loxodromic if and only if it grows exponentially rel.~$\mathcal G$, as required.

%\medskip
%The dendrite $\widehat{T}^{(1)}$ is an equivariant quotient of~$\partial F$ (Lemma~\ref{lem - exp to dense to min}(\ref{lem - exp to dense}) and Theorem~\ref{thm - Q map}) and satisfies Coulbois--Hilion's $Q$-index inequality, $i_{Q}(F \backslash \widehat T^{(1)}) < c(F)$. So we may also assume that the dendrite~$\widehat{T}^{(i)}$ in our induction hypothesis earlier is a dendritic equivariant quotient satisfying the $Q$-index inequality.
%{\color{red}We show in the final section that the equivariant blowup~$\widehat{T}^{(i+1)}$ is a dendritic equivariant quotient with $i_{Q}(F \backslash \widehat T^{(i+1)}) \le i_{Q}(F \backslash \widehat T^{(i)})$.}
%Thus, by induction, the dendrite~$\widehat T$ is an equivariant quotient of~$\partial F$.

\medskip
Thus the $T$-point stabilizers are represented by a canonical $[\phi]$-invariant subgroup system~$\mathcal H$.
Since~$\phi$ is exponentially growing rel.~$\mathcal G$, $T$-loxodromic elements exist and~$T$ is not degenerate.
By the index inequality, we get $c(\mathcal H) < c(\mathcal F)$.
\end{proof}

We can now give the more natural characterization of polynomially growing elements:

\begin{cor}
Let $\psi\colon\mathcal F \to \mathcal F$ be an automorphism, $\mathcal G$ a $[\psi]$-invariant proper free factor system of $\mathcal F$, and $(\Lambda_*, \mathcal G)$ a free splitting of~$\mathcal F$. 
An element~$x$ in~$\mathcal F$ grows polynomially rel.~$\mathcal G$ if and only if the sequence $\left(\|\psi^n(x)\|_{\Lambda*} \right)_{n \ge 1}$ is bounded by a polynomial in~$n$.
\end{cor}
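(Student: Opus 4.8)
The plan is to read the corollary off Theorem~\ref{thm - limpretrees} together with Proposition~\ref{prop - growth}: the dynamical content is already encoded in the limit pretree, so the only real task is to move a polynomial bound between a splitting of~$\mathcal F$ and a splitting of the point-stabilizer system. First I would dispose of the cheap implications. If $(\|\psi^n(x)\|_{\Lambda_*})_{n\ge 1}$ is bounded by a polynomial in~$n$, then, since $\|\cdot\|_{\Lambda_*}$ computes $\underline\lambda(x;\psi,\mathcal G)$ up to the comparability established in Proposition~\ref{prop - growth} and polynomials are subexponential, $\underline\lambda(x;\psi,\mathcal G)\le 1$; that is, $x$ grows polynomially rel.~$\mathcal G$. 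Conversely, if $x$ grows exponentially rel.~$\mathcal G$ then $\|\psi^n(x)\|_{\Lambda_*}$ itself grows at an exponential rate $>1$, so it is certainly not polynomially bounded. Hence only the case in which $x$ grows polynomially rel.~$\mathcal G$ needs an argument.

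So suppose $x$ grows polynomially rel.~$\mathcal G$, and let $T$ be the limit pretree provided by Theorem~\ref{thm - limpretrees}. By the third conclusion of that theorem $x$ is not $T$-loxodromic, and a pretree-automorphism of a real pretree is either loxodromic or elliptic, so $x$ is $T$-elliptic; thus $x$ fixes a point of~$T$ and therefore lies in a conjugate of some component~$H$ of the canonical system~$\mathcal H$. Translation distances in the $\mathcal F$-tree~$\Lambda_*$ are invariant under conjugation in~$\mathcal F$, so I may assume $x\in H$ (the case $x=1$ being trivial). By the last assertion of Theorem~\ref{thm - limpretrees}, $\left.\phi\right|_{\mathcal H}$ is polynomially growing rel.~$\mathcal G$; if~$\mathcal G$ is already non-proper as a free factor system of~$\mathcal H$, then $x$ is $\Lambda_*$-elliptic and $\|\psi^n(x)\|_{\Lambda_*}=0$ for all~$n$, so assume it is proper. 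Then Proposition~\ref{prop - growth}, applied to $\left.\phi\right|_{\mathcal H}$ and the proper $[\left.\phi\right|_{\mathcal H}]$-invariant free factor system~$\mathcal G$ of~$\mathcal H$, yields that $(\|(\left.\phi\right|_{\mathcal H})^n(y)\|_{\Lambda'_*})_{n\ge 1}$ is polynomially bounded for every $y$ in~$\mathcal H$ and every free splitting $(\Lambda'_*,\mathcal G)$ of~$\mathcal H$.

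It remains to transfer this back to $\|\cdot\|_{\Lambda_*}$. For $x\in H$ the iterate $\psi^n(x)$ is conjugate in~$\mathcal F$ to $(\left.\phi\right|_{\mathcal H})^n(x)\in\mathcal H$, so $\|\psi^n(x)\|_{\Lambda_*}=\|(\left.\phi\right|_{\mathcal H})^n(x)\|_{\Lambda_*}$; and the translation distance of an element of~$\mathcal H$ in~$\Lambda_*$ is computed in the characteristic subforest for~$\mathcal H$ inside~$\Lambda_*$, which is a free splitting of~$\mathcal H$ whose vertex-group system carries~$\mathcal G$. A free splitting of~$\mathcal H$ rel.~$\mathcal G$ admits an equivariant Lipschitz map onto any free splitting of~$\mathcal H$ relative to a system carrying~$\mathcal G$, so $\|\cdot\|$ in the characteristic subforest is bounded above by a fixed multiple of $\|\cdot\|_{\Lambda'_*}$ on~$\mathcal H$. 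Chaining these, $(\|\psi^n(x)\|_{\Lambda_*})_{n\ge 1}$ is polynomially bounded, as required.

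The hard part here is not conceptual — Theorem~\ref{thm - limpretrees} does the work — but lies in the final comparison: the free splitting of~$\mathcal H$ that ``sees'' the $\Lambda_*$-lengths of $\mathcal H$-elements is taken relative to a factor system that carries, but may be strictly larger than,~$\mathcal G$, and one must check this is harmless. It is, precisely because collapsing edges can only decrease translation distances, so the comparability inequality goes in the direction that preserves an \emph{upper} polynomial bound; one also has to ensure, as above, that Proposition~\ref{prop - growth} genuinely applies to $\left.\phi\right|_{\mathcal H}$, handling the non-proper case separately.
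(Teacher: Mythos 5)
Your proof is correct and follows essentially the same route as the paper: locate $x$ (up to conjugacy) in the canonical $T$-elliptic system~$\mathcal H$ via Theorem~\ref{thm - limpretrees}(\ref{cond - limpretrees.filter}), note that $\left.\phi\right|_{\mathcal H}$ is polynomially growing rel.~$\mathcal G$, and invoke Proposition~\ref{prop - growth}$(\ref{cond - growth.exprate}\Rightarrow\ref{cond - growth.seq})$. Where you add value is in spelling out two points the paper leaves implicit: the degenerate case where $\mathcal G$ is non-proper in~$\mathcal H$ (so Proposition~\ref{prop - growth} does not literally apply, but then $\|\psi^n(x)\|_{\Lambda_*}\equiv 0$), and the final transfer step from a free splitting $(\Lambda'_*,\mathcal G)$ of~$\mathcal H$ back to~$\Lambda_*$ via the characteristic subforest, with the Lipschitz comparison going in the correct direction to preserve an upper polynomial bound. (One minor redundancy: your second ``easy implication'' — exponential growth implies not polynomially bounded — is just the contrapositive of the first, so the two paragraphs establish only the reverse implication, but the forward direction is then handled properly.)
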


\begin{proof}
The reverse direction is immediate: polynomials are subexponential.
Suppose a nontrivial element $x \in \mathcal F$ grows polynomially rel.~$\mathcal G$.
Then it has a conjugate in~$\mathcal H$, the canonical $[\phi]$-invariant subgroup system of finite type given by Theorem~\ref{thm - limpretrees}(\ref{cond - limpretrees.filter}).
%the nontrivial element~$x$ has a conjugate in~$\mathcal H$.
%So the system~$\mathcal H$  is not empty and
As any restriction~$\left.\phi\right|_{\mathcal H}$ is polynomially growing rel.~$\mathcal G$, we are done by Proposition~\ref{prop - growth}$(\ref{cond - growth.exprate}{\Rightarrow}\ref{cond - growth.seq})$.
\end{proof}

We also give a dendrological characterization of {\it purely exponentially growing} automorphisms. An automorphism $\phi\colon F \to F$ is \underline{atoroidal} if there are no $[\phi]$-periodic conjugacy classes of nontrivial elements in $F$. %i.e.~if $x, y \in F$ and $\phi^n(x) = yxy^{-1}$ for some $n \ge 1$, then~$x$ is trivial.

\begin{cor}\label{cor - atoroidal} Suppose $\phi\colon F \to F$ is a free group automorphism with a limit pretree $T$.
%Then~$\phi$ is atoroidal if and only if all nontrivial elements in~$F$ are $\widehat{T}$-loxodromic.
Then the following are equivalent:
\begin{enumerate}
\item \label{char - atoroidal} $\phi$ is atoroidal;
\item \label{char - purelyexp} all nontrivial elements in~$F$ grow exponentially; and
\item \label{char - freeaction} the $F$-action on~$T$ is free.
\end{enumerate}
\end{cor}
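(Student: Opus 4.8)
The plan is to prove the three equivalences by establishing $(\ref{char - atoroidal}) \Leftrightarrow (\ref{char - purelyexp})$ directly from the definitions and growth theory, and then $(\ref{char - purelyexp}) \Leftrightarrow (\ref{char - freeaction})$ using the structural properties of the limit pretree $T$ supplied by Theorem~\ref{thm - limpretrees}. Throughout I take $\mathcal G$ to be empty (so that ``grows exponentially rel.~$\mathcal G$'' just means ``grows exponentially'' and $T$-loxodromic $\Leftrightarrow$ exponentially growing by Condition~\ref{cond - limpretrees.filter}), since that is the setting of the corollary's statement about atoroidality.

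First I would handle $(\ref{char - purelyexp}) \Rightarrow (\ref{char - atoroidal})$, which is essentially immediate: if some nontrivial conjugacy class is $[\phi]$-periodic, then that element has bounded (in fact eventually periodic) word-length orbit under iteration, hence $\underline\lambda(x;\phi) \le 1$, so $x$ does not grow exponentially, contradicting~(\ref{char - purelyexp}). For the converse $(\ref{char - atoroidal}) \Rightarrow (\ref{char - purelyexp})$, I would argue the contrapositive: suppose some nontrivial $x$ grows polynomially. By the Corollary just proved (or directly by Proposition~\ref{prop - growth} applied to the restriction $\left.\phi\right|_{\mathcal H}$ on the point-stabilizer system $\mathcal H$ of $T$), a conjugate of $x$ lies in $\mathcal H$ and the restriction $\left.\phi\right|_{\mathcal H}$ is polynomially growing. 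A polynomially growing automorphism of a free group system always has a $[\cdot]$-periodic nontrivial conjugacy class --- this follows from the descending sequence of irreducible train tracks in Proposition~\ref{prop - growth}, each of which is a simplicial automorphism when $\lambda = 1$; a simplicial automorphism of a free splitting with trivial edge stabilizers permutes the (finitely many) vertex orbits and has a periodic vertex, whose stabilizer (or an element thereof, passing down the chain, and using that the bottom stratum $\mathcal F_{k+1}$ is the empty system so some stratum genuinely supports elliptic elements) gives a $[\phi]$-periodic conjugacy class in $F$. Hence $\phi$ is not atoroidal.

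Next, $(\ref{char - purelyexp}) \Leftrightarrow (\ref{char - freeaction})$. By Theorem~\ref{thm - limpretrees}, the point stabilizers of the $F$-action on $T$ form the system $\mathcal H$, and any nontrivial $x \in F$ is $T$-elliptic (fixes a point of $T$) if and only if it has a conjugate in $\mathcal H$, if and only if it grows polynomially rel.~the empty system, i.e.~grows polynomially. So the $F$-action on $T$ is free precisely when no nontrivial element fixes a point of $T$, precisely when no nontrivial element grows polynomially, precisely when every nontrivial element grows exponentially. This is a direct translation of Condition~\ref{cond - limpretrees.filter} together with the fact that $T$-elliptic elements are exactly those with trivial translation behavior; one subtlety to address carefully is the case $\phi$ polynomially growing, where $T$ is the singleton and the action is trivially non-free (unless $F$ is trivial, which it is not), matching the failure of~(\ref{char - purelyexp}).

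The main obstacle I anticipate is the implication $(\ref{char - atoroidal}) \Rightarrow (\ref{char - purelyexp})$, specifically pinning down that a \emph{polynomially growing} automorphism of a nontrivial free group system must possess a $[\phi]$-periodic nontrivial conjugacy class. This requires unwinding the descending train track sequence: one must locate a periodic legal-or-fixed object at the lowest stratum and lift it upward through the blow-ups. The cleanest route is probably to observe that when all $\lambda(\tau^{(i)}) = 1$, the automorphism is (a representative of) a \emph{unipotent polynomially growing} outer automorphism after passing to a power, and such automorphisms are well known to fix nontrivial conjugacy classes --- but to keep the paper self-contained I would instead extract this from the simplicial structure directly: the final free splitting $(\Gamma_*^{(k)}, \mathcal G)$ with $\mathcal G$ empty has a genuine vertex group (a free factor) preserved up to conjugacy by a power of $\psi|_{\mathcal F_k}$, and induction down the complexity chain bottoms out at a rank-one free factor, yielding the desired periodic conjugacy class in $F$. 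Everything else is routine bookkeeping with the growth function and the equivalences already recorded.
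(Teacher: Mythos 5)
Your overall architecture matches the paper's: handle $(\ref{char - purelyexp})\Leftrightarrow(\ref{char - freeaction})$ by directly invoking Theorem~\ref{thm - limpretrees}(\ref{cond - limpretrees.filter}), observe that $(\ref{char - purelyexp})\Rightarrow(\ref{char - atoroidal})$ is immediate from the definition of growth, and reduce $(\ref{char - atoroidal})\Rightarrow(\ref{char - purelyexp})$ (in contrapositive form) to showing that a polynomially growing automorphism of the point-stabilizer system $\mathcal H$ has a $[\phi]$-periodic nontrivial conjugacy class, via Proposition~\ref{prop - growth}$(\ref{cond - growth.exprate}\Rightarrow\ref{cond - growth.tt})$. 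Up to that point you and the paper agree.

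There is, however, a genuine gap in how you close the last implication. You try to manufacture the periodic conjugacy class from a $\tau^{(i)}$-periodic \emph{vertex} and its stabilizer, recursing down the descending train track chain, and your final fallback asserts that ``the final free splitting $(\Gamma_*^{(k)},\mathcal G)$ with $\mathcal G$ empty has a genuine vertex group (a free factor)'' and that the induction ``bottoms out at a rank-one free factor.'' Neither claim is correct. When $\mathcal G=\emptyset$ the bottom free splitting $(\Gamma_*^{(k)},\emptyset)$ has \emph{trivial} vertex stabilizers by definition, so there is no vertex group to extract a nontrivial element from, and the chain does not bottom out at a rank-one free factor but at the empty system. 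The actual mechanism (which is what the terse sentence in the paper is pointing at) is different in kind: since $\lambda(\tau^{(k)})=1$, the map $\tau^{(k)}$ is a simplicial automorphism of a free splitting with trivial vertex stabilizers and a finite quotient graph, so $\tau^{(k)}$ induces a finite-order graph automorphism; tracing the equivariance, some power of the restriction $\left.\psi\right|_{\mathcal F_k}$ is inner on each $\mathcal F_k$-component, i.e.~that restriction has finite order in $\mathrm{Out}(\mathcal F_k)$. Consequently \emph{every} nontrivial conjugacy class in the (nontrivial, since a conjugate of $x$ lives there) system $\mathcal F_k$ is $[\phi]$-periodic, and any one of them witnesses that $\phi$ is not atoroidal. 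You should replace the ``periodic vertex stabilizer'' argument with this finite-order-at-the-bottom argument; the rest of the proposal is sound and coincides with the paper's proof.
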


\begin{proof} {~}

$(\ref{char - atoroidal} \implies \ref{char - purelyexp})$: 
Set $\mathcal G \defeq \emptyset$.
If $F$ has a nontrivial element that grows polynomially, then the canonical $[\phi]$-invariant subgroup system~$\mathcal H$ of finite type given by Theorem~\ref{thm - limpretrees} is not empty and any restriction~$\left.\phi\right|_{\mathcal H}$ is polynomially growing. By Proposition~\ref{prop - growth}$(\ref{cond - growth.exprate}{\Rightarrow}\ref{cond - growth.tt})$, conjugacy classes of nontrivial elements in the lowest stratum of~$\left.\phi\right|_{\mathcal H}$ are $[\phi]$-periodic.

$(\ref{char - purelyexp} \implies \ref{char - atoroidal})$: Any nontrivial element in a $[\phi]$-periodic conjugacy class has polynomial (in fact ``constant'') growth by definition.

$(\ref{char - purelyexp} \iff \ref{char - freeaction})$: This is Theorem~\ref{thm - limpretrees}(\ref{cond - limpretrees.filter}).
\end{proof}

%%%%%%%%%%%%%%%%%%%%%%%%%%%%%%%%%%%%%%%%%%%%%%%%%%%%%%%%%%
\section{Blow-ups: simple, na\"ive, and ideal}\label{SecBlowups}
%%%%%%%%%%%%%%%%%%%%%%%%%%%%%%%%%%%%%%%%%%%%%%%%%%%%%%%%%%
We now define the equivariant blow-ups that were the key steps in the proofs of Proposition~\ref{prop - topforest} and Theorem~\ref{thm - limpretrees}.
Although we present the constructions as two separate ideas (simplicial and non-simplicial), the underlying principle is the same:

\begin{enumerate}
\item start with an $F$-action on a tree (or real pretree) $T$ and a $\mathcal G$-action on a forest $\mathcal T_{\mathcal V}$ where~$\mathcal G$ are conjugacy representatives of the nontrivial point stabilizers of $T$;
\item then define a {\it na\"ive} equivariant blow-up of $T$ with respect to $\mathcal T_{\mathcal V}$;
\item there is a lot of freedom in the na\"ive construction but most choices will not be useful;
\item let $f\colon T \to T$ and $f_{\mathcal V}\colon\mathcal T_{\mathcal V} \to \mathcal T_{\mathcal V}$ be $\phi$- and $\left.\phi\right|_{\mathcal G}$-equivariant homeomorphisms respectively;
\item  consider the equivariant copies of $\mathcal T_{\mathcal V}$ in some blow-up tree (or real pretree)~$T^*$, then $f$ induces a $\phi$-equivariant function $f^*\colon T^* \to T^*$ whose restriction to the copies is $f_{\mathcal V}$ --- in a way, $f^*$ is formed by {\it stitching} $f$ and $f_{\mathcal V}$ together;
\item for most blow-ups, $f^*$ will not be a homeomorphism; but if $f_{\mathcal V}$ is {\it expanding}, then a unique fixed point theorem produces the ``ideal'' blow-up $T^*$ whose corresponding map $f^*$ is a homeomorphism.
\end{enumerate}

\subsection{Simple patchwork}
In this case, a blow-up is easy-but-tedious to define; ensuring the induced map is a homothety is the tricky bit.

\begin{thm}\label{thm - simple patch} Let $\psi\colon \mathcal F \to \mathcal F$ be an automorphism of a free group system $\mathcal F$ and $(\Gamma_*, \mathcal G)$ a free splitting of $\mathcal F$. Assume there is:
\begin{enumerate}
\item  a $\psi$-equivariant simplicial automorphism $\tau \colon (\Gamma_*, \mathcal G) \to (\Gamma_*, \mathcal G)$,
\item a minimal isometric $\mathcal G$-action on a forest $\mathcal T_{\mathcal V}$ with trivial arc stabilizers; and
\item a $\left.\psi\right|_{\mathcal G}$-equivariant expanding $\lambda$-homothety $h_{\mathcal V}\colon \mathcal T_{\mathcal V} \to \mathcal T_{\mathcal V}$.
%\item $\mathcal T_{\mathcal V}$-loxodromic elements in $\mathcal G$ grow exponentially.
\end{enumerate}
Then there is:
\begin{enumerate}
\item a minimal isometric $\mathcal F$-action on a forest $\mathcal T$ with trivial arc stabilizers, an equivariant copy of~$\mathcal T_{\mathcal V}$, and
\item a $\psi$-equivariant expanding $\lambda$-homothety $h\colon \mathcal T \to \mathcal T$ induced by~$\tau$ and~$h_{\mathcal V}$.
\end{enumerate}
In fact, the $\mathcal F$-action on $\mathcal T$ decomposes as a \emph{graph of actions} whose underlying simplicial action is the free splitting $(\Gamma_*, \mathcal G)$ and the vertex actions are the given $\mathcal G$-action on $\mathcal T_{\mathcal V}$.
The homothety~$h$ acts by~$\tau$ on the underlying simplicial action and $h_{\mathcal V}$ on the vertex actions.
Any pair $(\mathcal T', h')$ satisfying this conclusion admits an equivariant isometry $\mathcal T' \to \mathcal T$ that conjugates~$h'$ to~$h$.
\end{thm}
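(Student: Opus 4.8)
The plan is to realize $\mathcal{T}$ explicitly as a \emph{graph of actions} and then build $h$ by the contraction mapping theorem applied stratum-by-stratum along the edges of $\Gamma_* / \mathcal{F}$. First I would set up the blow-up: start with the Bass--Serre forest $(\Gamma_*, \mathcal{G})$, and at each vertex $v$ with nontrivial stabilizer $G_v$ (an element of $\mathcal{G}$), glue in an equivariant copy of the corresponding component of $\mathcal{T}_{\mathcal{V}}$, using the attaching data that each edge of $\Gamma_*$ incident to $v$ determines a $G_v$-orbit of ``attaching points'' in that component — concretely, pick a point in the tree component and propagate by the $\mathcal{F}$-action. Edges with trivial stabilizer are kept as segments of some fixed length (say $1$, to be rescaled later). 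The resulting object $\mathcal{T}$ is a forest; minimality follows because the $\mathcal{G}$-actions on $\mathcal{T}_{\mathcal{V}}$ are minimal and the free splitting is minimal, and trivial arc stabilizers follows because arcs of $\mathcal{T}$ either lie inside a vertex tree (trivial by hypothesis) or cross an edge of $\Gamma_*$ (trivial edge stabilizer). This gives conclusion (1), and the final ``graph of actions'' sentence, essentially by construction.

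The substantive part is conclusion (2): producing the $\psi$-equivariant expanding $\lambda$-homothety $h$, and showing it is \emph{unique}. The idea is that $h$ is forced on the vertex trees — it must restrict to $h_{\mathcal{V}}$ there, up to the equivariant identification — and it is forced combinatorially on the edges by $\tau$ (an edge $e$ of $\Gamma_*$ goes to the edge-path $\tau(e)$, which since $\tau$ is a \emph{simplicial automorphism} is a single edge). So the only freedom is the \emph{lengths} assigned to the (finitely many orbits of) edges of $\Gamma_*$, together with the precise placement of the images of attaching points. To pin these down I would argue as follows. Enumerate the $\mathcal{F}$-orbits of edges $e_1, \dots, e_m$ of $\Gamma_*$; let $\ell_j$ be the (as yet unknown) length of $e_j$. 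The requirement that $h$ be a $\lambda$-homothety on each edge and that $h$ be continuous across each vertex forces, for each edge $e_j$, a relation of the form: (distance in $\mathcal{T}$ from the initial attaching point of $e_j$ to its terminal attaching point, measured \emph{through} the vertex tree at the far end) gets scaled by $\lambda$ under $h$. Because $h$ on the vertex trees is the fixed expanding homothety $h_{\mathcal{V}}$ (which scales distances in $\mathcal{T}_{\mathcal{V}}$ by $\lambda$), and because $\tau$ prescribes which edge maps across which vertex, this assembles into a fixed-point equation for the vector $(\ell_1, \dots, \ell_m)$ of the form $\vec{\ell} = \frac{1}{\lambda} M \vec{\ell} + \vec{b}$, where $M$ is a nonnegative integer matrix recording how edges compose under $\tau$ and $\vec{b}$ records the (fixed, $h_{\mathcal{V}}$-determined) contributions of the vertex-tree portions. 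Since $\lambda > 1$, the operator $\vec{\ell} \mapsto \frac{1}{\lambda} M \vec{\ell} + \vec{b}$ is a contraction on $(\mathbb{R}_{\ge 0})^m$ in a suitable norm — here I would either invoke that $\tau$ being a simplicial automorphism forces $\lambda(\tau) = 1$ so $M$ has spectral radius $1$ and $\frac{1}{\lambda} M$ has spectral radius $\frac{1}{\lambda} < 1$, or directly use the Perron--Frobenius eigenvector to rescale into a sup-norm contraction — so it has a unique fixed point. This unique $\vec{\ell}$ determines the metric on $\mathcal{T}$ and, simultaneously, the unique map $h$; expanding-ness of $h$ is immediate since it is a $\lambda$-homothety with $\lambda > 1$.

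I then need to check $h$ is well-defined as a genuine homothety — i.e. that the edge-maps and the vertex-maps $h_{\mathcal{V}}$ actually glue to a map of the forest that is injective on edges and $\lambda$-scaling on all geodesics, not just on edges. This is where some care is needed: one must verify that a geodesic crossing several edges and vertex trees has its image a geodesic (no backtracking), which uses that $\tau$ is a legal/simplicial automorphism (so $\tau(e)\tau(e')$ is reduced whenever $ee'$ is) together with the fact that $h_{\mathcal{V}}$, being a homothety, sends geodesics to geodesics in the vertex trees. The $\psi$-equivariance of $h$ holds because all the choices (attaching data, the fixed-point vector $\vec{\ell}$) were made $\mathcal{F}$-equivariantly and $\tau$ is $\psi$-equivariant. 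The main obstacle I anticipate is precisely the bookkeeping in the fixed-point equation — correctly identifying $M$ and $\vec{b}$ so that the contraction argument applies, and confirming that this choice of lengths is not merely \emph{a} consistent choice but the \emph{only} one compatible with ``$h$ restricts to $h_{\mathcal{V}}$ and is a $\lambda$-homothety''; the uniqueness is what the statement is really asserting and it rests entirely on the contraction (hence uniqueness of the fixed point) rather than on any softer argument.
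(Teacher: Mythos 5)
There is a genuine gap, and it is precisely the issue that your last paragraph anticipates but does not resolve: with arbitrary attaching points, the map $h$ you describe can never be injective, regardless of how the edge lengths $\vec{\ell}$ are chosen, so it cannot be a homothety. To see this, suppose the edge $e_j$ is attached at $p_d \in T_v$, and write $\tau(e_j) = s\cdot \partial(e_j)$ with attaching point $s\cdot p_{\partial d}$. Equivariance forces $h|_{T_v} = h_v$, so $h$ sends the attaching point $p_d$ to $h_v(p_d)$; thus $h(e_j)$ is a path that starts at $h_v(p_d)$, travels through $T_{\beta\cdot v}$ to $s\cdot p_{\partial d}$, and only then enters the edge $\tau(e_j)$. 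But the subsegment $[h_v(p_d),\, s\cdot p_{\partial d}] \subset T_{\beta\cdot v}$ is already in the image $h(T_v) = T_{\beta\cdot v}$, so $h$ identifies points of the interior of $e_j$ with points of $T_v$. This is not a ``reducedness of $\tau(e)\tau(e')$'' matter — it is a vertex-tree overlap that the legality of $\tau$ says nothing about. Injectivity holds if and only if $h_v(p_d) = s\cdot p_{\partial d}$ for every orbit of attaching points, and this is exactly the condition you have to arrange — it is a constraint on the \emph{points} $p_d$, not on the lengths $\ell_j$.

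Your fixed-point equation is therefore aimed at the wrong unknowns. Once the injectivity constraint $h_v(p_d) = s\cdot p_{\partial d}$ is imposed, your offset terms $a_j, b_j$ vanish, $M$ is a permutation matrix, and the system becomes $\lambda \ell_j = \ell_{\tau(j)}$; since $\lambda > 1$ and $\tau$ permutes the finitely many edge orbits, the only solution is $\vec{\ell} = 0$. So the edge lengths are not a free parameter to be determined by the contraction — they are forced to be zero (the simplicial part gets collapsed). The paper's proof runs the contraction mapping theorem instead on the tuple of attaching points $\vec{p} = (p_d)$, viewed as living in the \emph{metric completions} $\overline{T}_v$ of the vertex trees: the constraint $\overline{h}_v(p_d) = s_{\partial\cdot d}\cdot p_{\partial\cdot d}$ is rewritten as $p_d = h_{v,d}(p_{\partial\cdot d})$, where $h_{v,d} = \overline{h}_v^{-1}\circ s_{\partial\cdot d}$ is a genuine metric contraction because $h_{\mathcal V}$ is an expanding homothety and, being minimal, is invertible. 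Completeness of $\overline{\mathcal T}_{\mathcal V}$ plus finiteness of $\bigcup_v \mathcal D_v$ gives a unique solution $\vec{p}$, and only for this $\vec{p}$ is the stitched PL-map injective; one then collapses the simplicial edges to obtain $\mathcal T$ and the $\lambda$-homothety $h$. So the correct theorem does come from a contraction argument (your instinct was right there), but the contraction lives on a finite product of complete $\mathbb{R}$-trees, not on $(\mathbb{R}_{\ge 0})^m$, and uniqueness of $h$ comes from uniqueness of the ideal attaching points, not from uniqueness of an edge-length vector.
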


\noindent We state and prove the theorem in terms of homotheties due to the specific needs in Proposition~\ref{prop - topforest} but the argument actually holds if ``expanding homothety'' is replaced with ``expansions (or contractions)''. 
We only need a hypothesis that lets us use the contraction mapping theorem.

\begin{proof} Suppose $(\Gamma_*, \mathcal G)$ is a free splitting of $\mathcal F$. %equipped with the combinatorial metric. 
%= \bigsqcup_{i \in \mathcal I} F_i$, $[v]$ is a labelled vertex (i.e.~$v$ has a nontrivial stabilizer~$G_v$) in the quotient graph~$\Gamma_*$, and $[e_v^1], \ldots, [e_v^{d(v)}]$ are all the half-edges in~$\Gamma_*$ originating from~$[v]$. 
%Choose and fix a vertex representative~$v$ of~$[v]$ and half-edge representatives~$e_v^i$ that originate from~$v$, then repeat the process for all labelled vertices in~$\Gamma_*$.
Let $\mathcal V$ be a set of orbit representatives for vertices in the free splitting with nontrivial stabilizers. 
For each $v \in \mathcal V$, let $\mathcal D_v$ be a set of orbit representatives for half-edges originating from $v$ and~$\overline T_v$ the metric completion of the $\mathcal T_{\mathcal V}$-component corresponding to~$G_v$.

Suppose $(p_d \in \overline T_v : v \in \mathcal V, d \in \mathcal D_v)$ is a choice of {\it attaching points} in~$\overline T_{\mathcal V}$.

\medskip
\noindent ({\it graph of actions}) 
To simplify the discussion, we will pretend $\mathcal F=F$ is connected for the moment.
Let $E$ be the complement of the $F$-orbit of $\mathcal V$ in $(\Gamma, \mathcal G)$;
essentially, we just want the half-edges in~$\mathcal D_v$ to now have distinct origins.
$E$ inherits a free $F$-action from the free splitting.
Set $\mathcal V^* \defeq F \times \overline{\mathcal T}_{\mathcal V}$;
$F$ acts on $\mathcal V^*$ by left-multiplication on the first factor.
The equivariant blow-up $T^* = T^*(p_d : v \in \mathcal V, d \in \mathcal D_v)$ of $(\Gamma, \mathcal G)$ with respect to the forest $\mathcal T_{\mathcal V}$ is defined through the quotient $\mathcal V^* \sqcup E \overset{\iota}\to T^*$ given by the following identifications: 
\begin{enumerate}
\item for each $v \in \mathcal V$, $d \in \mathcal D_v$ and $x \in F$, the origin of the half-edge $x \cdot d$ in~$E$ is identified with $(x, p_d)$ in~$\mathcal V^*$, i.e.~the half-edge $d$ is {\it attached} to $p_d$ equivariantly; and
\item for each $v \in \mathcal V$, $p \in \overline T_v$, $s \in G_v$, and $x\in F$, the point $(xs, p)$ in~$\mathcal V^*$ is identified with $(x, s \cdot p)$ in~$\mathcal V^*$.
\end{enumerate}
As the $F$-actions on $\mathcal V^*$ and $E$ respect the identifications, the blow-up~$T^*$ inherits an $F$-action by homeomorphisms of the quotient topology.
For each $s \in G_v$, the image~$\iota( s, \overline T_v)$ is a $G_v$-equivariant copy of $\overline T_v$ since $(\Gamma, \mathcal G)$ has trivial edge stabilizers; 
in general, we would need each attaching point~$p_d \in \overline T_v$ to be fixed by the stabilizer of $d$ in~$G_v$.

\medskip
The blow-up $T^*$ with the quotient topology has a natural projection $\pi \colon T^* \to (\Gamma, \mathcal G)$ whose point-preimages are connected: single point or a copy of $\overline T_v$. 
This implies $T^*$ is {\it uniquely arcwise connected}. 
Since the vertex set of $(\Gamma, \mathcal G)$ is a discrete subspace with finitely many $F$-orbits, any closed arc in $T^*$ decomposes into finitely many subarcs that are in $\iota(E)$ or $\iota(\mathcal V^*)$. 
Thus, closed arcs in $T^*$ inherit lengths from $(\Gamma_*, \mathcal G)$ and~$\overline{\mathcal T}_{\mathcal V}$; 
this path metric makes~$T^*$ a (metric) tree. 
%Recall that the path-metric is a coarser topology than the quotient topology.
By the equivariant construction, the tree~$T^*$ inherits an isometric $F$-action with trivial arc stabilizers and an equivariant copy of~$\mathcal T_{\mathcal V}$. 

The $F$-action on~$T^*$ is what Levitt calls a {\it graph of actions}~\cite[p.~32]{Lev94} --- the underlying simplicial action is the free splitting $(\Gamma, \mathcal G)$. 
Different choices $(p_d : v \in \mathcal V, d \in \mathcal D_v)$ of attaching points may produce drastically different graphs of actions.
%The $F$-invariant subforest of $\iota(E) \subset \mathcal T^*$ will be referred to as the {\it simplicial part}.

\medskip
Returning to the general case where~$\mathcal F$ is possibly disconnected, we can apply the blow-up construction componentwise to get a forest~$\mathcal T^*$ with an isometric $\mathcal F$-action.
We have yet to use the $\psi$-equivariant simplicial automorphism $\tau\colon (\Gamma_*,\mathcal G) \to (\Gamma_*, \mathcal G)$ nor the $\left.\psi\right|_{\mathcal G}$-equivariant expanding $\lambda$-homothety $h_{\mathcal V} \colon \mathcal T_{\mathcal V} \to \mathcal T_{\mathcal V}$.

\begin{rmk} We give two related proofs of the conclusion to the theorem. 
The first proof is short but it does not generalize to non-metric settings --- we only sketch it. 
The second proof is thorough as it contains the ideas needed later for the main construction.
\end{rmk}

\medskip
\noindent ({\it projective limit}) Suppose we have a na\"ive blow-up~$\mathcal T^*$ (made from an arbitrary choice of attaching points) and let~$\| \cdot \|^*$ be its translation distance function.
Since $\tau$ was a simplicial automorphism and $h_{\mathcal V}$ is a $\left.\psi\right|_{\mathcal G}$-equivariant $\lambda$-homothety, we get $\lambda^{-n} \|\psi^n(\cdot)\|^* \to \| \cdot \|$, where the limit function~$\| \cdot \|$ is the translation distance function for the required $\mathcal F$-action.

\medskip
%\noindent The strategy in the first proof was to make an arbitrary choice of attaching points, then folding the $n$-th iterate of the resulting induce linear map~$f^*$. This produced better choices of attaching points that converged to the ideal attaching points.
\noindent For the second proof, we get the ideal attaching points without taking projective limits.

\medskip

\noindent ({\it ideal stitching})
As the simplicial automorphism $\tau$ permutes the (finitely many) orbits of vertices and half-edges, it induces permutations $\beta \in \mathrm{Sym}(\mathcal V)$ and $\partial \in \mathrm{Sym}(\bigcup_{v\in \mathcal V} \mathcal D_v)$.
The maps~$\tau$ and $h_{\mathcal V}$ induce a $\psi$-equivariant {\it PL-map} $f^*\colon {\mathcal T^*} \to {\mathcal T^*}$: 
\begin{itemize}
\item let $\nu\colon \mathcal V^* \to \mathcal \iota(\mathcal V^*)$ be given by $ \nu(x,p) \defeq \iota(\psi(x)x_v, \bar h_v(p))$, where $( x_v \in \mathcal F: v\in \mathcal V)$ was the implicit choice used to define the restriction $\left.\psi\right|_{\mathcal G}$, and hence~$h_{\mathcal V}$ and its extension $\bar h_{\mathcal V} \colon \overline{\mathcal T}_{\mathcal V} \to \overline{\mathcal T}_{\mathcal V}$. 
Observe that $\nu(xs, p) = \nu(x, s\cdot p)$ when $s \in G_v$ (exercise). 
%So~$\nu$ factors through $\iota$ to give $\left.f^*\right|_{\iota(\mathcal V^*)}:\iota(\mathcal V^*) \to \iota(\mathcal V^*)$;

let $\epsilon\colon E \to \mathcal T^*$ be given by:
\item for each half-edge $e$ in $E$ not in the $\mathcal F $-orbit of $\bigcup_{v\in \mathcal V} \mathcal D_v$, $\epsilon$ maps $e$ onto $\iota(\tau(e)) \subset E$ along an orientation-preserving isometry.
\item for each $v \in \mathcal V$, $d \in \mathcal D_v$, and element $x$ in the $\mathcal F$-component that contains $G_v$, $\epsilon$~maps $x \cdot d$ onto the concatenation of 
$\iota( \psi(x)x_v , [\bar h_v(p_d), s_{\partial \cdot d} \cdot p_{\partial \cdot v}]_{\partial \cdot v} )$
%$[\iota(\psi(x)x_v, \bar f_v(p_d)), \iota(\psi(x)x_v, s_d \cdot p_{\partial(v)})]$ 
and $\iota(\psi(x) \cdot \tau(d))$ along an orientation-preserving linear map, 
where $ \tau(d) = x_v s_{\partial \cdot d} \cdot \partial(d)$ --- $s_{\partial \cdot d} \in G_{\beta \cdot v}$ is unique as edge stabilizers are trivial.
Notice that the origin of $\epsilon( x \cdot d)$ is $\nu(x,p_d)$.
\end{itemize}
By definition, $\nu \sqcup \epsilon\colon \mathcal V^* \sqcup E \to \mathcal T^*$ factors through $\iota$ to induce a PL-map~$f^*$.
%Check that $\pi \circ f^*$ and $\tau \circ \pi$ differ by an equivariant homotopy in the interior of $\iota(E)$-edges,
%and $\pi \circ f^* = \tau \circ \pi$ if and only if $f^*$ is injective.

\medskip
The PL-map $f^*$ is injective if and only if $\bar h_v(p_d) = s_{\partial \cdot d} \cdot p_{\partial \cdot d}$ for all $v \in \mathcal V$, $d \in \mathcal D_v$.
This finite system of equations with unknowns $( p_d \in \overline T_v : v \in \mathcal V, d \in \mathcal D_v )$ is restated as: 
\[ p_d = h_{v,d}(p_{\partial \cdot d}) ~\text{ for all } v\in \mathcal V,  d \in \mathcal D_v, \quad \text{where } h_{v,d} \defeq {\bar h}_v^{\,-1} \circ s_{\partial \cdot d} \colon T_{\beta \cdot v} \to T_v.\]
Note that minimality of~$\mathcal T_{\mathcal V}$ implies $h_{\mathcal V}$ is surjective and hence invertible.
So $h_{v,d}\colon T_{\beta \cdot v} \to T_v$, the composition of an isometry with a {\it contracting} homothety, is a contracting homothety. 
% This gives us an $m$-tuple of expanding $\lambda$-homotheties $(f_{v,i}, f_{w,j})$ that can be sequentially composed and whose composition $f_{v,i} \circ f_{w,j}$ is an expanding $\lambda^m$-homothety of $T_v$.
As $\bigcup_{v\in \mathcal V} \mathcal D_v$ is finite and~$\overline{\mathcal T}_{\mathcal V}$ is complete, our system of equations has a unique solution by the contraction mapping theorem, i.e.~there is a unique tuple $\vec p = (p_d \in \overline T_v : v \in \mathcal V, d \in \mathcal D_v)$ with  $p_d = h_{v,d}(p_{\partial \cdot d})$ for all $v \in \mathcal V$, $d \in \mathcal D_v$.

%{\color{red} in particular, $\tau^m(e_v^i)$ is $x_i g_i \cdot e_v^i$ for some $g_i \in G_v$ and $x_i \in X_v$. 
%Denote by $f_v$ the restriction of $(f_{\mathcal V})^m$ to the component $T_v$.
%We need attaching points $p_v^i$ satisfying $f_v(p_v^i) = g_i\cdot p_v^i$, or equivalently, fixed points of $f_{v,i} \defeq g_i^{-1}\cdot f_v(-)$.}
%If $g_v^{(i)}$ is $T_v$-elliptic and fixes $s_v^i$, then $f_v(s_v^i) = f_v(g_v^{(i)} \cdot s_v^i) $
%As $f_v$ is an expanding $\lambda$-homothety, so is $f_{v,i}$ and, thus, it has a unique fixed point in the metric completion $\overline T_v$ by the contraction mapping theorem!
%For an arbitrary $p_v^i$, define $q_v^i$ to be the $f_v$-preimage of $g_v^{(i)}\cdot p_v^i$. 
%As $f_v$ is expanding, $\lambda \cdot d_v(p_v^i, q_v^i) \le d_v(f_v(p_v^i), f_v(q_v^i))$ for some $\lambda > 1$. 
%Again, we set $r_v^i = f_v^{-1}(g_v^{(i)} \cdot q_v^i)$ and get $\lambda \cdot d_v(q_v^i, r_v^i) \le d_v(f_v(q_v^i), f_v(r_v^i)) = d_v(p_v^i, q_v^i)$ since $G_v$ acts by isometries.
%Repeating this process, we produce a Cauchy sequence $p_v^i, q_v^i, r_v^i, \ldots $ that may not necessarily converge as $T_v$ is not necessarily complete.
%Equivariantly add a point $s_v^i$ to $T_v$ corresponding to the sequence if necessary. 
%The limit $s_v^i \in T_v$ satisfies $f_v(s_v) = g_v^{(i)}\cdot s_v^i$.
%The fixed points $p_v^i$ determine attaching choices for all $[w]$ in the $\tau$-orbit of $[v]$: {\color{red} take their $f_{\mathcal V}$-images.} 

\medskip
Let $\mathcal T^* = \mathcal T^*(\vec p\,)$ be the blow-up given by this unique solution.
Then the homeomorphism~$f^*$ is isometric on permuted components of the simplicial part $\iota(E)$ and an expanding $\lambda$-homothety on components of the non-simplicial part $\iota(\mathcal V^*)$. 

\medskip
{\noindent \it (simplicial collapse)} To finish the proof, we collapse the simplicial edges to get an isometric $\mathcal F$-action on a forest with trivial arc stabilizers. 
Let $\mathcal T$ be the characteristic subforest for $\mathcal F$, i.e.~the $\mathcal F$-action on $\mathcal T$ is minimal. 
Since we only collapsed the simplicial part, the PL-map~$f^*$ induces a $\psi$-equivariant expanding $\lambda$-homothety $h\colon \mathcal T \to \mathcal T$. 
%Finally, by the last sentence of the previous paragraph, $\mathcal T$-loxodromic elements in $\mathcal F$ must grow exponentially.

\medskip
{\noindent \it (uniqueness)} Suppose some $\mathcal F$-forest $\mathcal T'$ decomposed as a graph of actions with underlying simplicial action $(\Gamma_*, \mathcal G)$ and vertex actions~$\mathcal T_{\mathcal V}$ and admitted a homothety~$h'$ that acts by~$\tau$ on $(\Gamma_*, \mathcal G)$ and $h_{\mathcal V}$ on~$\mathcal T_{\mathcal V}$.
This forest must arise from the above construction and the equivariant isometric identification with $\mathcal T$ follows from uniqueness of the solution~$\vec p$.
\end{proof}

\subsection{Na\"ive stitching}
Due to the involved nature of the proofs when dealing with non-simplicial trees, the blow-up and stitching constructions are split into two sections;
let us start with blow-ups.
%In fact, the proof of the blow-up construction will be split into two parts as well.

Suppose a countable group~$G$ acts rigidly on a real pretree~$T$.
Recall that if the action is minimal, then~$T$ must be short and the pretree completion~$\widehat T$ is itself real.
Given any subset $P \subset \widehat T$, we define~$T[P]$ to be the union in~$\widehat T$ of $T$ and the $G$-orbit of~$P$.
Generally, assume a countable group system $\mathcal G = \bigsqcup_{i \in \mathcal I} G_i$ acts minimally on real pretrees~$\mathcal T_{\mathcal I}$.
Then for any subset $P \subset \widehat{\mathcal T}$, we can similarly define~$\mathcal T[P]$ and let $T_i[P]$ be the component of~$\mathcal T[P]$ indexed by $i \in \mathcal I$.
We now make a simple but crucial observation:

\begin{lem}\label{lem - rigid extension}Let~$\mathcal G$ be a countable group system that admits a minimal rigid action on real pretrees~$\mathcal T$.
If a subset $P \subset \widehat{\mathcal T}$ contains no points fixed by a $\mathcal T$-loxodromic element of~$\mathcal G$, then the induced $\mathcal G$-action on the real pretrees~$\mathcal T[P]$ is rigid.

Additionally, if the rigid action on~$\mathcal T$ has finite arc stabilizers, then so does the rigid action on~$\mathcal T[P]$.
\end{lem}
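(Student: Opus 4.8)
The plan is to unwind the definitions of rigidity and of the blow-up $\mathcal T[P]$ and check the two assertions componentwise, reducing everything to a statement about a single group $G_i$ acting on the real pretree $T_i[P]$ sitting inside the completion $\widehat{T_i}$. Since $\mathcal T[P]$ is by construction a $\mathcal G$-invariant convex subset of $\widehat{\mathcal T}$ containing $\mathcal T$, any element $g \in G_i$ restricts to a pretree-automorphism of $T_i[P]$; I would first record that an element that is elliptic on $T_i$ (i.e.\ has nonempty fixed set there) remains elliptic on $T_i[P]$, while an element that is $T_i$-loxodromic \emph{stays} loxodromic on $T_i[P]$ --- this is exactly where the hypothesis on $P$ is used. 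Indeed, a loxodromic $g$ has an axis $A \subset T_i$, and the only way passing to the larger pretree could create a fixed point for $g$ is if some $g$-fixed point of $\widehat{T_i}$ got added; the hypothesis that $P$ contains no point fixed by a $\mathcal T$-loxodromic element of $\mathcal G$ (hence neither does the $\mathcal G$-orbit of $P$) rules this out, so $g$ is still loxodromic on $T_i[P]$ with the same axis.

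Next I would verify rigidity itself. Let $g \in G_i$; if $g$ is loxodromic on $T_i[P]$ there is nothing to check, so assume $g$ is elliptic, hence (by the previous paragraph) already elliptic on $T_i$, with $\mathrm{Fix}_{T_i}(g) \neq \emptyset$. I claim $\mathrm{Fix}_{T_i[P]}(g) \cap T_i = \mathrm{Fix}_{T_i}(g)$ and that $g$ fixes no direction at this set. The point is that a direction $d$ at $\mathrm{Fix}_{T_i[P]}(g)$ either meets $T_i$ --- in which case it determines a direction at $\mathrm{Fix}_{T_i}(g)$ inside $T_i$, which is not $g$-fixed by rigidity of the original action --- or it is entirely contained in the added orbit $\mathcal G \cdot P$ attached beyond a unique point $p_d \in \mathrm{Fix}_{T_i}(g)$; in the latter case $g$ would have to fix the attaching point $p_d$ and permute the points of $\widehat{T_i}$ lying in $d$, and since those new points are limits of (or points of) the orbit of $P$ near $p_d$, a $g$-fixed such point would be a point of $\widehat{\mathcal T}$ fixed by the elliptic element $g$; but a direction $d$ fixed by $g$ together with a fixed attaching point forces, via the non-nesting/rigidity already established for elliptic elements, a contradiction with convexity of $\mathrm{Fix}$. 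Assembling these cases over all $i \in \mathcal I$ gives rigidity of the $\mathcal G$-action on $\mathcal T[P]$.

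For the second assertion, suppose the action on $\mathcal T$ has finite arc stabilizers and let $A \subset T_i[P]$ be an arc; I want the pointwise stabilizer of $A$ to be finite. If $A$ meets $T_i$ in a nondegenerate subarc, its pointwise stabilizer is contained in the pointwise stabilizer of that subarc of $T_i$, which is finite by hypothesis, and we are done. The remaining case is that $A$ lies entirely in one of the newly attached copies of a component of the orbit of $P$; but those copies are isomorphic (as $\mathcal G$-pretrees, componentwise) to subsets of the original $\mathcal T$, and more to the point the stabilizer of such a copy acts on it through a subgroup whose arc stabilizers are already finite --- concretely, the pointwise stabilizer of $A$ fixes two distinct points of $\widehat{T_i}$ and hence the arc of $\widehat{T_i}$ between their nearest-point projections onto $T_i$ (which is nondegenerate unless $A$ degenerates), again landing inside a finite arc stabilizer of $\mathcal T$; the degenerate edge cases are handled by the trivial observation that point stabilizers need not be finite but \emph{arc} stabilizers only concern nondegenerate $A$.

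The main obstacle I expect is the bookkeeping in the elliptic case of the rigidity check: precisely pinning down what ``$g$ fixes a direction at $\mathrm{Fix}_{T_i[P]}(g)$'' means when that direction pokes into the freshly glued-on material, and showing cleanly that such a $g$-fixed direction would force a $g$-fixed point among the added points of $\widehat{\mathcal T}$ --- i.e.\ translating ``fixes a direction'' back into ``fixes a point of the completion'' using shortness of $T_i$ and the fact, recorded just before this lemma, that the completion of a short real pretree is real so that the attaching/limit points of $P$ genuinely live in $\widehat{\mathcal T}$. Once that translation is made, the hypothesis on $P$ closes the argument immediately, but getting the translation right (especially distinguishing the $g$-elliptic situation, where new points \emph{can} be fixed, from the contribution of $P$, which by hypothesis supplies no points fixed by loxodromics) is the delicate part.
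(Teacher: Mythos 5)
Your argument is essentially the paper's, and the parts that matter are correct: transferring a putative $g$-fixed direction at $\mathrm{Fix}_{T[P]}(g)$ to a $g$-fixed direction at $\mathrm{Fix}_T(g)$ (contradicting rigidity of the original action), and passing from an arc of $T[P]$ to a subarc of $T$ (giving finiteness of arc stabilizers). However, you miss the one observation that makes the proof clean, and as a result you invent a ``latter case'' that does not exist and then fill it with a circular argument.

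The missing observation, which the paper states explicitly, is that every nondegenerate arc of $T[P]$ contains a nondegenerate arc of $T$. Equivalently, the added set $T[P]\setminus T \subset \widehat T \setminus T$ contains no nondegenerate convex subsets: if $\widehat p \neq \widehat q$ lie in $\widehat T\setminus T$, the open interval $(\widehat p, \widehat q)$ already meets $T$. Consequently every $T[P]$-direction at a point or subset meets $T$, and every nondegenerate arc of $T[P]$ meets $T$ in a nondegenerate subarc. Your ``direction entirely contained in the added orbit $\mathcal G\cdot P$'' and your ``arc $A$ lying entirely in one of the newly attached copies'' are therefore vacuous cases. This is not merely a matter of economy: your treatment of the vacuous rigidity case is circular --- you appeal to ``the non-nesting/rigidity already established for elliptic elements'' and ``convexity of $\mathrm{Fix}$,'' but rigidity of the $T[P]$-action is precisely what you are trying to establish, and convexity of the fixed set is a consequence of rigidity, not an independently available fact at that point. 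Since the case never arises, the slip does not invalidate the proof, but if it \emph{did} arise your argument for it would not close. Replace both spurious cases with the one-line observation above and the proof matches the paper's.
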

\begin{proof}[Sketch of proof]
$\mathcal T[P]$-loxodromics are precisely $\mathcal T$-loxodromics as no point in~$P$ is fixed by a $\mathcal T$-loxodromic element of~$\mathcal G$.
Also, observe that, for all elements~$g$ in $\mathcal G$, the $\mathcal T[P]$-directions at $\mathrm{Fix}_{\mathcal T[P]}(g)$ are in one-to-one equivariant bijection with the $\mathcal T$-directions at $\mathrm{Fix}_{\mathcal T}(g)$.
So the induced action on~$\mathcal T[P]$ is rigid.
Finally, any arc of~$\mathcal T[P]$ contains an arc of~$\mathcal T$; 
if the latter have finite stabilizers,
then the rigid action on~$\mathcal T[P]$ has finite arc stabilizers.
\end{proof}

For a given countable group~$H$ and rigid $H$-action on a real pretree~$T$ with finite arc stabilizers, fix a set of orbit representatives $\mathcal V \subset T$ for branch points with infinite stabilizers~$\mathcal G$.
By countability of~$H$ and finiteness of arc stabilizers, the set~$\mathcal V$ is countable.
For each $v\in \mathcal V$, consider the $G_v$-action on the directions at~$v$ and fix a set of orbit representatives~$\mathcal D_v$ for these directions.
For a direction~$d$ at~$v$, we let $\mathrm{Stab}_{G_v}(d)$ denote the (setwise) stabilizer of~$d$ in~$G_v$. 

\begin{prop}\label{prop - naive stitching} %Let $F$ be a free group acting on a tree~$T$ by isometries with trivial arc stabilizers.
Suppose a real pretree~${T}$ has a rigid $H$-action with finite arc stabilizers, $\mathcal G$ represents the infinite ${T}$-point stabilizers, and there is a minimal rigid $\mathcal G$-action on real pretrees~${\mathcal T}_{\mathcal V}$.

If $\vec p = ( p_d \in \widehat T_v: v \in \mathcal V, d \in \mathcal D_v)$ is a choice of points  and each point~$p_d$ is fixed by the direction stabilizer $\mathrm{Stab}_{G_v}(d)$, then there is:
\begin{enumerate}
\item \label{cond - naive.pretree} an $H$-action on a real pretree~$T^* = T^*(\vec p\,)$ by pretree-automorphisms; 
\item \label{cond - naive.pretree of actions} equivariantly collapsing the copies of~${\mathcal T}_{\mathcal V}[\,\vec p \,]$ in~$T^*$ recovers the real pretree~${T}$;
\end{enumerate}
If no attaching point~$p_d$ in~$\vec p$ is fixed by a ${\mathcal T}_{\mathcal V}$-loxodromic element in~$\mathcal G$, then
\begin{enumerate}[resume]
\item \label{cond - naive.rigid} the $H$-action on~$T^*$ is rigid;
\item \label{cond - naive.charrealpretree} the characteristic convex subset of~$T^*$ for~$\mathcal G$ is (an equivariant copy of)~${\mathcal T}_{\mathcal V}$; and
\item \label{cond - naive.loxodromics} an element in~$H$ is $T^*$-loxodromic if and only if it is ${T}$- or ${\mathcal T}_{\mathcal V}$-loxodromic.
\end{enumerate}
Moreover, if the $\mathcal G$-action on~$\mathcal T_{\mathcal V}$ has finite arc stabilizers, then so does the $H$-action on~$T^*$. 
If the rigid $H$-action on~$T$ is minimal, then so is the rigid $H$-action on~$T^*$.
\end{prop}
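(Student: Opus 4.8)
The plan is to realize $T^*$ as a ``refinement'' of $T$ equipped with a collapse map $\pi\colon T^*\to T$ whose point-preimages (fibres) are either single points, over the complement of $H\cdot\mathcal V$, or an equivariant copy of $T_v[\,\vec p\,]$, over a translate of some $v\in\mathcal V$, with the direction $d$ at (a translate of) $v$ in $T$ attached to the matching translate of $p_d$. Concretely I would take the quotient of $\bigl(H\times\mathcal T_{\mathcal V}[\,\vec p\,]\bigr)\sqcup\bigl(T\setminus H\cdot\mathcal V\bigr)$ by the relations $(xs,q)\sim(x,s\cdot q)$ for $s\in G_v$, together with the attachments of directions to the points $p_d$, exactly as in the simplicial blow-up of Theorem~\ref{thm - simple patch}; the relations are consistent precisely because each $p_d$ is fixed by $\mathrm{Stab}_{G_v}(d)$, so the $G_v$-action on the attaching data descends. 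I would then put a pretree structure on $T^*$ by \emph{concatenation along $\pi$}: for $x,y\in T^*$ with $a=\pi(x)$, $b=\pi(y)$, declare $[x,y]_{T^*}$ to be the union of the sub-interval of the fibre over $a$ from $x$ to the attaching point of the direction at $a$ separating it from $b$, the symmetric tail over $b$, and, for each $c\in(a,b)_T$, the single point of the fibre over $c$ if $c\notin H\cdot\mathcal V$ and otherwise the interval of the fibre over $c$ joining the attaching points of the two directions at $c$ that separate $a$ from $b$. Verifying the pretree axioms then reduces to thinness, which is a finite case analysis on where $\pi(y)$ sits relative to $[\pi x,\pi z]_T$ combined with thinness inside $T$ and inside each fibre; symmetry and linearity are immediate after applying $\pi$. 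For realness one observes that a closed interval of $T^*$ is obtained from the closed interval $[a,b]_T\cong$ a closed interval of $\mathbb R$ by replacing at most countably many of its points ($H$ and $\mathcal V$ being countable) by closed intervals of $\mathbb R$, and such a linear order is Dedekind-complete, separable, and has a least and a greatest point, hence is isomorphic to a closed interval of $\mathbb R$. Finally the $H$-action on the disjoint union respects all identifications, so it descends to an action on $T^*$ by pretree-automorphisms, and collapsing the copies of $\mathcal T_{\mathcal V}[\,\vec p\,]$ along $\pi$ literally returns $T$; this gives Conditions~\ref{cond - naive.pretree} and~\ref{cond - naive.pretree of actions}.

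Suppose now that no $p_d$ is fixed by a $\mathcal T_{\mathcal V}$-loxodromic element of $\mathcal G$. The crucial input is Lemma~\ref{lem - rigid extension}: each $G_v$-action on $T_v[\,\vec p\,]$ is rigid, has the same loxodromic elements as the $G_v$-action on $T_v$, and has finite arc stabilizers whenever the $\mathcal G$-action on $\mathcal T_{\mathcal V}$ does. To analyse an element $g\in H$ I would split along $\pi$. If $g$ is $T$-loxodromic, its axis lifts through $\pi$ to a $g$-invariant arc of $T^*$ on which $g$ has no fixed point, so $g$ is $T^*$-loxodromic. If $g$ is $T$-elliptic, then either $\mathrm{Fix}_T(g)$ meets the complement of $H\cdot\mathcal V$ --- in which case such a point has a unique, hence $g$-fixed, preimage and $g$ is $T^*$-elliptic --- or $g$ fixes some translate of a $v\in\mathcal V$, where it lies in the corresponding conjugate of $G_v$ and acts on that fibre $\cong T_v[\,\vec p\,]$; by the previous sentence $g$ is then $T^*$-loxodromic if and only if it is $\mathcal T_{\mathcal V}$-loxodromic, and otherwise it fixes a point of $T_v$ inside the fibre. (An element fixing two distinct translates of $\mathcal V$-points has a nondegenerate fixed arc, hence finite order by finiteness of arc stabilizers, so is loxodromic nowhere; this keeps the cases consistent.) This proves Condition~\ref{cond - naive.loxodromics}. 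For rigidity (Condition~\ref{cond - naive.rigid}), when $g$ is $T^*$-elliptic the set $\mathrm{Fix}_{T^*}(g)$ is convex and its directions are in equivariant bijection with the directions at $\mathrm{Fix}_T(g)$ away from a blown-up fixed point, or with the directions at $\mathrm{Fix}_{T_v[\,\vec p\,]}(g)$ at such a point; rigidity of the $H$-action on $T$ together with rigidity from Lemma~\ref{lem - rigid extension} then shows $g$ fixes none of them.

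For Condition~\ref{cond - naive.charrealpretree}, the copy of $T_v$ inside the fibre over $v$ is a $G_v$-invariant convex subset of $T^*$, and minimality of the $G_v$-action on $T_v$ together with the nearest-point (``gate'') projection of $T^*$ onto this convex set shows that every nonempty $G_v$-invariant convex subset of $T^*$ contains it; hence it is the characteristic convex subset for $G_v$, and over $\mathcal V$ these assemble into an equivariant copy of $\mathcal T_{\mathcal V}$. Finite arc stabilizers of the $H$-action on $T^*$ follow because an arc of $T^*$ either projects onto an arc of $T$ or lies inside a single fibre $T_v[\,\vec p\,]$, and in either case its pointwise stabilizer embeds into a finite arc stabilizer, of $T$ respectively of the $G_v$-action on $T_v[\,\vec p\,]$ via Lemma~\ref{lem - rigid extension}. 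For minimality, if the $H$-action on $T$ is minimal and $C\subseteq T^*$ is a nonempty $H$-invariant convex subset, then $\pi(C)$ is nonempty, $H$-invariant, and convex in $T$, hence equal to $T$; so $C$ meets every fibre, and the gate argument above forces $C$ to contain the copy of $T_v$ in each blown-up fibre, whence $C=T^*$.

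I expect the main obstacle to be the first paragraph: making the concatenation formula manifestly well-defined and checking the thinness axiom in all configurations, together with the equivariant bookkeeping that ties the fibres $T_v[\,\vec p\,]$ to the directions of $T$ and keeps the $G_v$-actions coherent. This is the tedious heart of the argument; by contrast, Lemma~\ref{lem - rigid extension} does essentially all of the conceptual work behind the rigidity and loxodromy statements, and the characteristic-subset and minimality claims reduce to minimality of $\mathcal T_{\mathcal V}$ via the gate projection.
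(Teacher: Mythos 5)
Your construction of $T^*$, the concatenation definition of $[\cdot,\cdot]^*$, the pretree-axiom checking, the rigidity and loxodromy analysis via Lemma~\ref{lem - rigid extension}, and the arc-stabilizer case split all track the paper's proof closely; your realness argument (separable Dedekind-complete dense order with endpoints is isomorphic to a closed interval of $\mathbb R$) is a clean alternative to the paper's Cantor-function sketch and is fine.

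The one genuine gap is the ``gate projection'' argument for Condition~\ref{cond - naive.charrealpretree} and for minimality. You want to project $T^*$ onto the convex copy of $T_v$ sitting inside the fibre $\pi^{-1}(v)\cong T_v[\,\vec p\,]$ and appeal to equivariance plus minimality of $T_v$. But gates onto $T_v$ need not exist: the attaching points $p_d$ are allowed to lie in $\widehat T_v\setminus T_v$, in which case for $p^*\in T^*$ with $\pi(p^*)$ in the direction $d$, the intervals $[p^*,q]^*$ ($q\in T_v$) enter the fibre at $p_d$ and then meet $T_v$ along an ever-shrinking ray $(p_d,q]$ --- there is no single point of $T_v$ lying in all of them. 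So the gate to $T_v$ does not exist, and the equivariant retraction you are invoking is undefined. (Gates onto the full fibre $\pi^{-1}(v)$ do exist --- they are precisely the translated attaching points --- but projecting onto the fibre rather than onto $T_v$ is exactly what forces you back to the paper's route.) The paper avoids gates entirely: a nonempty $G_v$-invariant convex $C^*\subset T^*$ projects under $\pi$ to a $G_v$-invariant convex subset of $T$, which must contain $v$ because the characteristic convex subset of $T$ for $G_v$ is the singleton $\{v\}$ (infinite stabilizer, finite arc stabilizers); hence $C^*\cap\pi^{-1}(v)$ is a nonempty $G_v$-invariant convex subset of $T_v[\,\vec p\,]$, and minimality of the $G_v$-action on $T_v$ forces $T_v\subset C^*$. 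The same replacement applies in your minimality paragraph, where you again invoke the gate. With that repair, the rest of the proposal goes through.
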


\noindent %In this proposition, the free group $F$ can be relaxed to a countable group and the rigid actions to have finite arc stabilizers. 
%We must only require the attaching points~$p_d$ to be fixed by the (finite) stabilizers of the direction~$d$.
For the sake of compartmentalizing the proposition's long proof, we break it into two parts. 
%the first part proves the existence of a real pretree with an action by pretree-automorphisms; 
%and the second part verifies that the action satisfies the stated conditions. 

%In the previous theorem, we first defined the blow-up in the category of topological spaces, and then used a path-metric to get a (metric) forest. 
%In the current theorem, we define the blow-up in the category of real pretrees.

\begin{proof}[Proof of real pretree's existence] 
Following the discussion preceding the theorem's statement, let~$T_v~(v \in \mathcal V)$ be the component of~${\mathcal T}_{\mathcal V}$ corresponding to~$G_v$.

Assume $\vec p = ( p_d \in \widehat T_v: v \in \mathcal V, d \in \mathcal D_v)$ is a choice of points where each point~$p_d$ is fixed by the direction stabilizer $\mathrm{Stab}_{G_v}(d)$;
these points will be referred to as the {\it attaching points}.
The real pretrees~$T$ and~$T_v[\,\vec p\,]~(\text{for } v \in \mathcal V)$ have pretree structures denoted %$[\cdot, \cdot]\colon\widehat T \times \widehat T \to \mathcal P(\widehat T)$ and $[\cdot , \cdot]_v\colon \widehat T_v \times \widehat T_v \to \mathcal P(\widehat T_v)$ respectively
$[\cdot, \cdot]$ and $[\cdot , \cdot]_v$ respectively.
% --- their topologies will now be forgotten and they will instead be treated as separable complete real pretrees.
%In fact, the core construction defines the blow-up in the category of pretrees if we consider $[\cdot, \cdot]$ and $[\cdot, \cdot]_v$ only as pretrees.
The equivariant blow-up~$T^* = T^*(\vec p\,)$ of~$T$ with respect to ${\mathcal T}_{\mathcal V}[\,\vec p\,]$ is defined through the quotient
\[ \iota \colon T \setminus (H \cdot \mathcal V)~\sqcup~ H \times {\mathcal T}_{\mathcal V}[\,\vec p\,] \longrightarrow T^*,\]
given by these identifications:
\begin{itemize}
\item for each $v \in \mathcal V$, $p \in T_v[\,\vec p\,]$, $s \in G_v$, and $x \in H$, identify~$(xs, p)$ with $(x, s \cdot p)$.
\end{itemize}
Just as in the simplicial setting, $T^*$ inherits an $H$-action --- at least by set-bijections.
By construction,~$T_v[\,\vec p \,]$ is equivariantly identified with $\iota(G_v \times T_v[\,\vec p \,])$ for $v \in \mathcal V$ and equivariantly collapsing the translates of~$\mathcal T_{\mathcal V}[\,\vec p\,]$ produces an equivariant surjection $\pi \colon T^* \to T$. 
This almost establishes Condition~\ref{cond - naive.pretree of actions}:
we need to define a pretree structure that is $H$-invariant and projects to the pretree structure of~$T$ under~$\pi$.

\medskip
For a pretree structure, we need a function $[\cdot, \cdot]^*\colon T^* \times T^* \to \mathcal P(T^*)$. Let $p^*, q^* \in T^*$.

\medskip
\noindent \underline{Case 1: $\pi(p^*) = \pi(q^*) \in H \cdot \mathcal V $}. 
Choose $y \in H$ so that $\pi(p^*) = y \cdot v$ with $v \in \mathcal V$. Then $p^* = \iota(y, p)$ and $q^* = \iota(y, q)$ for some $p,q \in T_v[\,\vec p\,]$. Define 
\[ [p^*, q^*]^* \defeq \iota( y, [p,q]_v). \]
Let us quickly check that $[p^*, q^*]^*$ is independent of our choice of $y \in H$. If $s \in G_v$, then $\pi(p^*) = ys \cdot v$, $p^* = \iota(ys, p')$, and $q^* = \iota(ys, q')$ where $s\cdot p' = p$ and $s\cdot q' = q$ by $\iota$'s definition. 
So 
\( \iota (ys , [p', q']_v) = \iota ( y, s \cdot [p', q']_v) = \iota ( y, [p,q]_v). \)

In this case, we are equivariantly extending the pretree-structure of~$\mathcal T_{\mathcal V}[\,\vec p\,]$ to its orbit in~$T^*$, i.e.~the image~$\iota(H \times \mathcal T_{\mathcal V}[\,\vec p\,])$.

\medskip
\noindent \underline{Case 2: both $\pi(p^*) = p, \pi(q^*) = q$ are not in $H \cdot \mathcal V$}. 
Consider the closed interval $[p,q] \subset T$. Then 
\[ [p,q] = [p,q] \setminus (H \cdot \mathcal V)~\cup \bigcup_{v \in \mathcal V, k \ge 1} \{ y_{v,k} \cdot v \}, \]
for some (possibly finite or empty) subset $\{ y_{v,k} \}_{v \in \mathcal V, k \ge 1}$ of~$H$. 
For each $v \in \mathcal V$ and $k \ge 1$, the interval $[p, y_{v,k} \cdot v)$ is contained in the direction $y_{v,k} s_{v, k_{-}} \cdot d(v,k_{-})$ at $y_{v,k} \cdot v$ for some $s_{v, k_{-}} \in G_v$ and $d(v,k_{-}) \in \mathcal D_v$. 
The element $s_{v, k_{-}}$ is unique up to right-multiplication by~$\mathrm{Stab}_{G_v}(d(v,k_{-}))$.
So the point $s_{v, k_{-}}\cdot p_{d(v,k_{-})}$ is independent of the choice of~$s_{v,k_{-}}$ since the attaching point~$p_{d(v,k_{-})}$ is fixed by~$\mathrm{Stab}_{G_v}(d(v,k_{-}))$.
Similarly, $(y_{v,k} \cdot v, q]$ is in $y_{v,k} s_{v, k_{+}} \cdot d(v, k_{+})$ at $y_{v,k} \cdot v$.

Define \[ [p^*,q^*]^* \defeq \iota{\left( [p,q] \setminus (H \cdot \mathcal V) ~\cup \bigcup_{v \in \mathcal V, k \ge 1} \{ y_{v,k} \} \times [s_{v,k_{-}} \cdot p_{d(v,k_{-})},\,s_{v,k_{+}} \cdot p_{d(v,k_{+})}]_v \right)}. \]
Again, $[p^*, q^*]^*$ is independent of the choice of $y_{v,k}$. 
In this case (and the rest), we are equivariantly ``attaching'' directions~$d$ to points $p_d$ to get the pretree~$T^*(\vec p\,)$. 
%Generally, if there are nontrivial arc stabilizers, then the attaching point $p_d$ must be fixed by the stabilizer of~$d$ in~$G_v$.

\medskip
\noindent \underline{Case 3: $\pi(p^*) = p \notin H \cdot \mathcal V, \pi(q^*) \in H \cdot \mathcal V $ (or vice-versa)}. 
Choose an element $y_\omega \in H$ so that $\pi(q^*) = y_\omega \cdot w$ with $w \in \mathcal V$. Then $q^* = \iota(y_\omega, q)$ for some $q \in  T_w$.
Consider the closed interval $[p,y_\omega \cdot w] \subset  T$. Then 
\[ [p, y_\omega \cdot w] = [p, y_\omega \cdot w] \setminus (H \cdot \mathcal V)~\cup \{ y_\omega \cdot w \} \cup \bigcup_{\substack{v \in \mathcal V, k \ge 1 \\ y_{v,k} \cdot v \neq y_\omega \cdot w}} \{ y_{v,k} \cdot v \}. \]

As in the previous case, for each $v \in \mathcal V$ and $k \ge 1$, the interval $[p, y_\omega \cdot w]$ determines directions $y_{v,k} s_{v,k_\pm} \cdot d(v,k_\pm)$ at the interior point $y_{v,k} \cdot v$.
Additionally, the interval determines some direction $y_\omega s_\omega \cdot d(\omega)$ at $y_\omega \cdot w$.

Define $[p^*,q^*]^*$ to be the $\iota$-image of \[ [p,y_\omega \cdot w] \setminus (H \cdot \mathcal V) ~\cup \left(\{ y_\omega \} \times [s_\omega \cdot p_{d(\omega)},\,q]_w \right) \cup \bigcup_{\substack{v \in \mathcal V, k \ge 1 \\ y_{v,k} \cdot v \neq y_\omega \cdot w}} \{ y_{v,k} \} \times [s_{v,k_{-}} \cdot p_{d(v,k_{-})},\,s_{v,k_{+}} \cdot p_{d(v,k_{+})}]_v. \]

\noindent \underline{Case 4: $\pi(p^*), \pi(q^*) \in H \cdot \mathcal V$ with $\pi(p^*) \neq \pi(q^*)$}.
Choose $y_\alpha, y_\omega \in H$ so that $\pi(p^*) = y_\alpha \cdot a$ and $\pi(q^*) = y_\omega \cdot w$ with $a, w \in \mathcal V$.
The rest is just like Case~3 except that this time $[y_\alpha \cdot a, y_\omega \cdot w]$ determines directions $y_\alpha s_\alpha \cdot d(\alpha)$ and $y_\omega s_\omega \cdot d(\omega)$ at $y_\alpha \cdot a$ and $y_\omega \cdot w$ respectively. 
We omit the redundant details.
\medskip

By construction, $\pi([p^*,q^*]^*)=[\pi(p^*), \pi(q^*)]$. 
From this, we can directly verify that: 
\begin{enumerate}
\item $[\cdot, \cdot]^*$ satisfies the pretree axioms (see Appendix~\ref{SecSolns} for \hyperlink{prop - naive stitching}{details}). 

\item $H$ acts on $T^*$ by pretree-automorphisms, i.e.~for all $x \in H$ and $p^*,q^* \in T^*$, we have $[x \cdot p^*, x \cdot q^*]^* = x \cdot [p^*,q^*]^*$ (exercise).
\end{enumerate}

\medskip
Remember that $(T, [\cdot, \cdot])$ and $(T_v[\,\vec p\,], [\cdot, \cdot]_v)~(v \in \mathcal V)$ were real pretrees.
Let us now check that the pretree $(T^*, [\cdot, \cdot]^*)$ is real, i.e.~any closed interval $I^* = [p^*, q^*]^*$ is pretree-isomorphic to a closed interval of~$\mathbb R$. 
%If $p^* = q^*$, then $I^*$ is a singleton and hence pretree-isomorphic to a closed interval of $\mathbb R$, say $[0,0]$. We may assume $p^* \neq q^*$. 
By the definition of $[\cdot, \cdot]^*$, $I^*$ is formed by removing countably many points from a closed interval $I \subset T$ and replacing them with closed intervals pretree-isomorphic to closed intervals of ${\mathcal T}_{\mathcal V}[\,\vec p\,]$-components. Note that~$I$ is pretree-isomorphic to a closed interval of $\mathbb R$ since $T$ is a real pretree. 
For the same reason, closed intervals of ${\mathcal T}_{\mathcal V}[\,\vec p\,]$-components are pretree-isomorphic to closed intervals of $\mathbb R$. 
Similar to the Cantor function, we can use the given pretree-isomorphisms (and the axiom of choice) to construct an isomorphism between $I^*$ and a closed interval of $\mathbb R$. We leave the details as an exercise. So~$T^*$ is a real pretree.
\end{proof}

\begin{rmk}
Although we invoked the axiom of (countable) choice to conclude that the blow-up~$T^*$ is real, this can be avoided in the setting we are most intersted in: $H = F$ and the action is minimal.
In this case, Gaboriau--Lustig's index theory implies there are only finitely many orbits of branch points and hence only finitely many choices to be made.
\end{rmk}

\begin{proof}[Proof that conditions are satisfied...] %\color{red}
Now assume none of the attaching points~$\vec p$ are fixed by a ${\mathcal T}_{\mathcal V}$-loxodromic element in~$\mathcal G$.
%Let $\pi \colon T^* \to T$ be the equivariant surjection from the construction of $T^*$.

\medskip 
\textbullet ~Condition~\ref{cond - naive.rigid}, rigidity of $H$-action on~$T^*$:

\noindent 
Suppose $x \in H$ has a nonempty fixed-point set $\mathrm{Fix}_{T^*}(x) \subset T^*$. 
We need to show that~$x$ fixes no $T^*$-direction at $\mathrm{Fix}_{T^*}(x)$.

First, we show that the fixed-point set $\mathrm{Fix}_{T^*}(x)$ is convex.
By the equivariance of~$\pi$ and rigidity of the $H$-action on~$T$, $\mathrm{Fix}_T(x)$ is nonempty and convex.
As~$x$ acts as a pretree-automorphism, it must also fix the limit points of $\iota(\mathrm{Fix}_{T}(x) \setminus \left( H\cdot \mathcal V\right))$ in $\iota(H \times \mathcal T_{\mathcal V}[\,\vec p\,])$.
By Lemma~\ref{lem - rigid extension}, the $\mathcal G$-action on~$\mathcal T_{\mathcal V}[\,\vec p\,]$ is rigid and so $\mathrm{Fix}_{T^*}(x)$ is convex:
this used the assumption~$\vec p$ was a choice of points not fixed by $\mathcal T_{\mathcal V}$-loxodromic elements in~$\mathcal G$.
Thus each $T^*$-direction~$d$ at $\mathrm{Fix}_{T^*}(x)$ has a unique attaching point~$p^*$ in $\mathrm{Fix}_{T^*}(x)$.

Finally, pick a $T^*$-direction~$d$ at $\mathrm{Fix}_{T^*}(x)$ and let~$p^* \in \mathrm{Fix}_{T^*}(x)$ be its attaching point. 
If $\pi(p^*) \notin H\cdot \mathcal V$, then~$d$ corresponds to  a $T$-direction at $\mathrm{Fix}_T(x)$ and the latter is not fixed by~$x$ as the $H$-action on~$T$ is rigid.
If $\pi(p^*) = y \cdot v$ for some $y \in H$ and $v \in \mathcal V$, then~$d$ corresponds to either a $\iota(y, T_{v})$-direction at $\iota(y, T_{v}) \cap \mathrm{Fix}_{T^*}(x)$ or a $T$-direction at $\mathrm{Fix}_T(x)$ (possible when $p^* \in \iota(y, \vec p\,)$).
Again, in either case, the direction is not fixed due to rigidity of actions.
As~$d$ was arbitrary, we are done:~$x$ fixes no $T^*$-direction at $\mathrm{Fix}_{T^*}(x)$.

\medskip
\textbullet ~Condition~\ref{cond - naive.charrealpretree}, characteristic convex subset of~$T^*$ for~$\mathcal G$: 

\noindent Fix a point $v \in \mathcal V$.
We need to show that the characteristic convex subset of~$T^*$ for~$G_v$ is~$\iota(G_v \times T_v)$.
Since the rigid $H$-action on~$T$ has finite arc stabilizers and the $T$-elliptic subgroup~$G_v$ is infinite, the characteristic convex subset of~$T$ for~$G_v$ is the singleton~$\{ v \}$. 
Thus, by the equivariance of $\pi$, a characteristic convex subset of~$T^*$ for~$G_v$ is contained in $\pi^{-1}(v) = \iota(G_v \times T_v[\,\vec p\,])$.
But~$\pi^{-1}(v)$ and~${T}_v[\,\vec p\,]$ are equivariantly pretree-isomorphic (Condition~\ref{cond - naive.pretree of actions}).
The characteristic convex subset of~$T_v[\,\vec p\,]$ for~$G_v$ is~$T_v$ since we assumed the $G_v$-action on~$T_v$ is minimal;
therefore, the characteristic convex subset of~$T^*$ for~$G_v$ is~$\iota(G_v \times T_v)$ as needed.

\medskip
\textbullet ~Condition~\ref{cond - naive.loxodromics}, characterizing $T^*$-loxodromics --- or equivalently, $T^*$-elliptics:

\noindent Suppose $x$ is $T$- and ${\mathcal T}_{\mathcal V}$-elliptic, i.e.~after replacing it with a conjugate if necessary, $x \in G_v$ for some $v \in \mathcal V$ and is also $T_v$-elliptic.
So $\iota(x, \mathrm{Fix}_{T_v}(x)) \subset \mathrm{Fix}_{T^*}(x)$ and $x$ is $T^*$-elliptic.

Conversely, suppose $x$ is $T$- or $\mathcal T_{\mathcal V}$-loxodromic.
By the equivariance of~$\pi$, $\pi(\mathrm{Fix}_{T^*}(x))$ is contained in $\mathrm{Fix}_T(x)$.
If $x$ is $T$-loxodromic, i.e.~$\mathrm{Fix}_T(x) = \emptyset$, then it is also $T^*$-loxodromic.
Suppose $x$ is $T$-elliptic but $\mathcal T_{\mathcal V}$-loxodromic.
Then $\mathrm{Fix}_{T}(x)$ is a singleton since the rigid $H$-action on~$T$ has finite arc stabilizers.
After replacing it with a conjugate if necessary, assume $x \in G_v$ and is $T_v$-loxodromic for some $v\in \mathcal V$.
As $\mathrm{Fix}_{T^*}(x)$ is contained in $\pi^{-1}(v)$ and the pretrees~$\pi^{-1}(v)$ and~$T_v[\,\vec p\,]$ are equivariantly identified, we get $\mathrm{Fix}_{T^*}(x)$ is $\iota(x, \mathrm{Fix}_{T_v[\,\vec p\,]}(x))$.
But~$x$ is $T_v[\,\vec p\,]$-loxodromic since it is $T_v$-loxodromic and it fixes none of the attaching points~$\vec p$ by assumption;
therefore,~$x$ is $T^*$-loxodromic.

\medskip
\textbullet ~Finiteness of arc stabilizers:

\noindent Assume the rigid $\mathcal G$-action on $\mathcal T_{\mathcal V}$ has finite arc stabilizers.
Let $A^* \subset T^*$ be an arc, then $A \defeq \pi(A^*) \subset T$ is either a singleton or an arc.
If $A$ is an arc, then, by the equivariance of~$\pi$ and rigidity of actions (Condition~\ref{cond - naive.rigid}), the stabilizer of~$A^*$ is the stabilizer of~$A$;
so it is finite.
If $A = \{ p \}$ is a singleton, then $p = y \cdot v$ for some $y\in H$ and $v \in \mathcal V$.
Thus $A^* = \iota(y, A_v)$ for some arc $A_v \subset T_v[\,\vec p\,]$ and its stabilizer in~$H$ is conjugate to the stabilizer of~$A_v$ in~$G_v$;
therefore, it is finite by Lemma~\ref{lem - rigid extension}.

\medskip
\textbullet ~Minimality:

\noindent Assume the rigid $H$-action on~$T$ is minimal.
By the equivariance of~$\pi$, any characteristic convex subset of~$T^*$ for~$H$ must map onto~$T$ under the projection~$\pi$.
In particular, any characteristic convex subset will contain~$\iota(T \setminus (H \cdot \mathcal V))$ and the attaching points $\iota(H \times \vec p\,)$.
By Condition~\ref{cond - naive.charrealpretree}, any characteristic convex subset will contain~$\iota(H \times \mathcal T_{\mathcal V})$;
therefore,~$T^*$ is the characteristic convex subset for~$H$, i.e.~the $H$-action on $T^*$ is minimal.
%Then the limit set of any $F$-orbit of a point in~$T^*$ has nonempty intersection with $\pi^{-1}(p)$ for all $p \in \widehat{T}$.
%In particular, the limit set contains $\iota(\widehat T \setminus (F \cdot \mathcal V))$ and a point in each $\pi^{-1}(v) = \iota( \{ s \} \times \widehat T_v)$ for $v \in \mathcal V$ (and $s \in G_v$).
%But the $G_v$-action on~$\widehat T_v$, and hence on~$\pi^{-1}(v)$, is minimal for all $v \in \mathcal V$ by assumption.
%Thus the $F$-orbit of any point in~$T^*$ is dense and the $F$-action on~$T^*$ is minimal.
\end{proof}

\subsection{Ideal stitching}\label{subsec - blowups.ideal}

The crucial step in proving Theorem~\ref{thm - simple patch} was using the contraction mapping theorem to find the right blow-up that induced a homothety.
We use the same idea for the next theorem, which is the heart of the paper.
%For the next theorem, we no longer have the contraction mapping theorem directly at our disposal since the dendrites have no (interesting) metric;
%in its stead, we have the $F$-expanding property.
%The definition of $F$-expanding was given in Section~\ref{SecTrees}.
%Here is the generalization to free factor systems.

%\medskip
For a minimal rigid $F$-action on a real pretree with trivial arc stabilizers, let~$\mathcal V$ be a set of orbit representatives of branch points with nontrivial stabilizers~$G_v$. The point stabilizer system $\mathcal G = \bigsqcup_{v \in \mathcal V} G_v$ has finite type by Gaboriau--Levitt's index inequality.

\begin{thm} \label{thm - ideal stitch} Suppose a real pretree~$T$ has a minimal rigid $F$-action with trivial arc stabilizers, and let $\phi\colon F \to F$ be an automorphism.

Assume $f \colon T \to T$ is a $\phi$-equivariant pretree-automorphism and~$\mathcal G$ represents the nontrivial $T$-point stabilizers with:
\begin{enumerate}
\item a minimal isometric $\mathcal G$-action on a forest~${\mathcal Y}_{\mathcal V}$ with trivial arc stabilizers; and
\item a $\left.\phi\right|_{\mathcal G}$-equivariant expanding homothety $h_{\mathcal V}\colon {\mathcal Y}_{\mathcal V} \to {\mathcal Y}_{\mathcal V}$.
\end{enumerate}
Then there is:
\begin{enumerate}
\item \label{cond - ideal.rigid} a minimal rigid $F$-action on a real pretree~$T^*$ with trivial arc stabilizers, where
\item \label{cond - ideal.charrealpretree} the characteristic convex subset of~$T^*$ for~$\mathcal G$ is (an equivariant copy of)~${\mathcal Y}_{\mathcal V}$;
\item \label{cond - ideal.loxodromics} an element in~$F$ is $T^*$-loxodromic if and only if it is~$T$- or~${\mathcal Y}_{\mathcal V}$-loxodromic; and
\item \label{cond - ideal.automorphism}  $f^*\colon T^* \to T^*$ is the $\phi$-equivariant pretree-automorphism induced by~$f$ and~$h_{\mathcal V}$ --- the restriction of~$f^*$ to~$\mathcal Y_{\mathcal V}$ is~$h_{\mathcal V}$ and equivariantly collapsing~$\mathcal Y_{\mathcal V}$ recovers~$f$.
\end{enumerate}
Any pair $(T', f')$ satisfying the same conclusion admits an equivariant pretree-isomorphism $T' \to T^*$ that conjugates~$f'$ to~$f^*$;
moreover, if~$f$ is $F$-expanding, then so is~$f^*$.
\end{thm}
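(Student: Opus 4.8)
The plan is to replicate the proof of Theorem~\ref{thm - simple patch}, replacing the simplicial blow-up by the na\"ive blow-up of Proposition~\ref{prop - naive stitching} and the eigenmetric by the pretree structure. The first thing I would check is that the hypotheses of Proposition~\ref{prop - naive stitching} are met automatically: for a rigid action with \emph{trivial} arc stabilizers every direction stabilizer $\mathrm{Stab}_{G_v}(d)$ is trivial --- if $1 \neq g \in \mathrm{Stab}_{G_v}(d)$ then $\mathrm{Fix}_T(g)$ is convex and contains $v$, so if it met $d$ it would contain a nondegenerate interval inside $d$ (an arc fixed by $g$), while if it missed $d$ then $d$ would lie in a unique direction $D$ of $T$ at $\mathrm{Fix}_T(g)$ and $g$ would fix $D$; either way this contradicts triviality of arc stabilizers or rigidity. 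Hence \emph{any} tuple $\vec p = (p_d : v \in \mathcal V,\ d \in \mathcal D_v)$ of points in the metric completion $\overline{\mathcal Y}_{\mathcal V}$ (which embeds in the pretree completion $\widehat{\mathcal Y}_{\mathcal V}$) is an admissible choice of attaching points, producing a real pretree $T^*(\vec p\,)$ with an $F$-action by pretree-automorphisms and an equivariant collapse $\pi\colon T^*(\vec p\,) \to T$ whose non-singleton fibres are the translates of $\mathcal Y_{\mathcal V}[\,\vec p\,]$.

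Next, exactly as in the ``ideal stitching'' part of the proof of Theorem~\ref{thm - simple patch}, the $\phi$-equivariant pretree-automorphism $f$ permutes the finitely many orbits of branch points with nontrivial stabilizer and the orbits of directions at them, inducing $\beta \in \mathrm{Sym}(\mathcal V)$ and a permutation $\partial$ of $\bigcup_v \mathcal D_v$; writing $f(v) = x_v\cdot(\beta\cdot v)$ and $f(d) = s_{\partial\cdot d}\cdot(\partial\cdot d)$ with $s_{\partial\cdot d} \in G_{\beta\cdot v}$ unique (direction stabilizers being trivial), the pair $f, h_{\mathcal V}$ determines a $\phi$-equivariant pretree-endomorphism $f^*(\vec p\,)$ of $T^*(\vec p\,)$ acting by $f$ off the blown-up translates and by $\bar h_v$ (followed by the identification $d \mapsto s_{\partial\cdot d}\cdot(\partial\cdot d)$) on the translate over $v$; it satisfies $\pi\circ f^* = f\circ\pi$ and is the unique such map. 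This $f^*(\vec p\,)$ is a bijection --- hence a pretree-automorphism --- precisely when $\vec p$ solves the finite system
\[ p_d = h_{v,d}(p_{\partial\cdot d}) \quad (v \in \mathcal V,\ d \in \mathcal D_v), \qquad h_{v,d} \defeq \bar h_v^{\,-1}\circ s_{\partial\cdot d}. \]
Since $h_{\mathcal V}$ is an expanding homothety it is onto (minimality), hence invertible, so each $h_{v,d}$ is a contracting homothety; as $\bigcup_v\mathcal D_v$ is finite (Gaboriau--Levitt index inequality) and $\overline{\mathcal Y}_{\mathcal V}$ is complete, the contraction mapping theorem produces a unique solution $\vec p$. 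Set $T^* \defeq T^*(\vec p\,)$ and $f^* \defeq f^*(\vec p\,)$; uniqueness of $T^*$ is immediate, since any admissible blow-up whose stitched map is a pretree-automorphism uses attaching points solving the same system.

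To finish, I would feed $\vec p$ back into Proposition~\ref{prop - naive stitching}: a loxodromic isometry of an $\mathbb R$-tree has no fixed point in its metric completion, so no $p_d$ is fixed by a $\mathcal Y_{\mathcal V}$-loxodromic element of $\mathcal G$, whence Conditions~\ref{cond - naive.rigid}--\ref{cond - naive.loxodromics}, minimality, rigidity and (via Lemma~\ref{lem - rigid extension}, since $T$ and $\mathcal Y_{\mathcal V}$ have trivial arc stabilizers) triviality of arc stabilizers all hold; this gives Conditions~\ref{cond - ideal.rigid}--\ref{cond - ideal.loxodromics}, and Condition~\ref{cond - ideal.automorphism} was obtained above. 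For the last clause, given $x \in F$ and $n \ge 1$, set $g = x\circ(f^*)^n$, so $\pi\circ g = (x\circ f^n)\circ\pi$; since $f$ is $F$-expanding, $x\circ f^n$ is either $T$-loxodromic with unshared axis --- in which case $\mathrm{Fix}_{T^*}(g) \subseteq \pi^{-1}(\emptyset) = \emptyset$, so $g$ is $T^*$-loxodromic, and projecting its axis by $\pi$, together with Condition~\ref{cond - ideal.loxodromics} (which confines any $\mathcal Y_{\mathcal V}$-loxodromic axis to a single blown-up translate), shows its axis is unshared --- or elliptic on $T$ with a unique fixed point $p_0$ at which it expands, in which case $\mathrm{Fix}_{T^*}(g) \subseteq \pi^{-1}(p_0)$: if $p_0 \notin F\cdot\mathcal V$ the fibre is a point whose $T^*$-directions match the $T$-directions at $p_0$ and expansion transfers; if $p_0 \in F\cdot\mathcal V$ then $g$ preserves a blown-up translate and restricts there to an expanding homothety of $\mathcal Y_v$ (a composite of an isometry with a power of $h_{\mathcal V}$), which is itself $F$-expanding on its canonical pretree structure, and gluing this to the $T$-side expansion of $x\circ f^n$ in the attached directions yields the $F$-expanding conclusion for $g$.

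The routine parts are the transcription of the proof of Theorem~\ref{thm - simple patch} into the pretree language of Proposition~\ref{prop - naive stitching} and the verification of the pretree axioms, which Proposition~\ref{prop - naive stitching} already carries out. The hard part will be this last clause: carefully reconciling ``expansion inside a blown-up copy of $\mathcal Y_{\mathcal V}$,'' governed by an expanding homothety and the contraction mapping theorem, with ``expansion in the directions of $T$ attached along $\vec p$,'' governed by the $F$-expansion of $f$ --- in particular ruling out that a composition which becomes loxodromic in $T^*$ acquires an axis shared with an element of $F$, and handling the possibility that a homothety's repelling fixed point escapes $\mathcal Y_v[\,\vec p\,]$ --- all while bookkeeping the $\phi$-equivariant identifications.
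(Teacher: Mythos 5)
Your proposal follows the paper's route step for step: admissibility of all choices of attaching points (because direction stabilizers are trivial), na\"ive stitching via Proposition~\ref{prop - naive stitching}, the system $p_d = h_{v,d}(p_{\partial\cdot d})$ characterizing when the induced set-bijection $f^*$ is a pretree-automorphism, and the contraction mapping theorem in $\overline{\mathcal Y}_{\mathcal V}$ giving a unique solution $\vec p$ of points not fixed by $\mathcal Y_{\mathcal V}$-loxodromics. (One small correction of phrasing: $f^*(\vec p\,)$ is always a set-bijection by construction; what the system characterizes is whether it is a pretree-\emph{automorphism}, i.e.\ whether it carries $[\cdot,\cdot]^*$ to $[\cdot,\cdot]^*$.)

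The real gap is in the $F$-expanding clause, which you yourself flag but do not close. When $p_0 = y\cdot v \in F\cdot\mathcal V$, the restriction of $g = x\circ(f^*)^n$ to $\pi^{-1}(p_0)$ is the extension of $\gamma_v \defeq s_v\circ h_{\beta^{n-1}\cdot v}\circ\cdots\circ h_v$ to $\overline Y_v$, an expanding homothety with a unique repelling fixed point $q_v \in \overline Y_v$. Your ``gluing the expansions'' argument tacitly assumes $q_v \in Y_v[\,\vec p\,]$, but nothing forces this, and if $q_v \notin Y_v[\,\vec p\,]$ then $g$ has \emph{no} fixed point in $T^*$ at all --- it is loxodromic, so there is no expansion to glue and instead one must verify the second branch of the $F$-expanding definition. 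The paper's resolution: since $\gamma = x\circ f^n$ is $T$-elliptic, $\gamma^*$ cannot share its $T^*$-axis with any $T$-loxodromic element of $F$; and since one end of the $\gamma^*$-axis is $\iota(y,q_v)$ with $q_v$ in the metric completion $\overline Y_v$ but outside $Y_v[\,\vec p\,]$, $\gamma^*$ cannot share its axis with any conjugate of a $Y_v$-loxodromic element of $G_v$; Condition~\ref{cond - ideal.loxodromics} then rules out all remaining $T^*$-loxodromics. Without this case analysis your argument does not establish $F$-expansion of $f^*$.
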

\begin{proof}
%Since~$\mathcal T_{\mathcal V}$ has a minimal metric $\mathcal G$-action with finitely many orbits of branch points (Gaboriau--Levitt's index inequality) and supports a $\left.\phi\right|_{\mathcal G}$-equivariant expanding homothety, the action has dense orbits (componentwise, Lemma~\ref{lem - exp to dense to min}(\ref{lem - exp to dense})). 
%So the forest~$\mathcal T_{\mathcal V}$ must be separable (in the topological sense) since $\mathcal G$ is countable. 
%Set $\mathcal S' \subset \mathcal T_{\mathcal V}$ to be a countable (topologically) dense subset and $\mathcal S \subset \mathcal T_{\mathcal V}$ the midpoints in~$\mathcal T$ of all (not necessarily distinct) pairs of points in~$\mathcal S'$ in the same $\mathcal T_{\mathcal V}$-component;
%$\mathcal S$ is a countable and strongly dense (in the  pretree sense) subset of~$\mathcal T_{\mathcal V}$.
%Let $\widehat{\mathcal T}_{\mathcal V}$ be the pretree completion of $\mathcal T_{\mathcal V}$ with observers' topology;
%it is a collection of dendrites (Theorem~\ref{thm - pretree to dendrite}) with a convergence $\mathcal G$-action (Theorem~\ref{thm - isom to conv}).
For each $v\in \mathcal V$, fix a set of $F$-orbit representatives~$\mathcal D_v$ for ${T}$-directions at~$v$.
The set~$\mathcal D_v$ is finite as well, again by the index inequality.
Since the $\phi$-equivariant pretree-automorphism~$f$ permutes the $F$-orbits of ${T}$-branch points and ${T}$-directions, it induces permutations $\beta \in \mathrm{Sym}(\mathcal V)$ and $\partial \in \mathrm{Sym}(\bigcup_{v \in \mathcal V} \mathcal D_v)$ with $\partial(d) \in \mathcal D_{\beta \cdot v}$ if~$d \in \mathcal D_v$.

By viewing~$\mathcal Y_{\mathcal V}$ as real pretrees, the minimal isometric $\mathcal G$-action is also a minimal rigid $\mathcal G$-action.
For any choice $\vec p = ( p_d \in \widehat Y_v: v \in \mathcal V, d \in \mathcal D_v)$ of points not fixed by a ${\mathcal Y}_{\mathcal V}$-loxodromic element in~$\mathcal G$, we can apply Proposition~\ref{prop - naive stitching} to construct a real pretree~$T^* =  T^*(\vec p \,)$ that satisfies Conditions~\ref{cond - ideal.rigid}-\ref{cond - ideal.loxodromics}.
To find the choice $\vec p$ that implies the remaining Condition~\ref{cond - ideal.automorphism},
we must use the pretree-automorphism $f\colon T \to T$ and expanding homothety $ h_{\mathcal V}\colon {\mathcal Y}_{\mathcal V} \to {\mathcal Y}_{\mathcal V}$.

\medskip
%Recall $\mathcal F = \bigsqcup_{i \in \mathcal I} F_i$ and set $\widehat{\mathcal T}_{\mathcal V, i}$ to be the disjoint union of trees~$\widehat{T}_v$ whose groups~$G_v$ are subgroups of~$F_i$.
The maps $f$ and $h_{\mathcal V}$ induce a $\phi$-equivariance set-bijection $f^*\colon T^* \to T^*$:
\begin{itemize}
\item for $p \in {T} \setminus (F \cdot \mathcal V)$, define $f^*(\iota(p)) \defeq \iota(f(p))$; and
\item for $(x,p) \in F \times {\mathcal Y}_{\mathcal V}[\, \vec p\,]$, define $f^*(\iota(x,p)) \defeq \iota(\psi(x)x_v, \hat h_v(p))$, where the implicit choice $( x_v \in F: v\in \mathcal V)$ was used to define the restriction $\left.\phi\right|_{\mathcal G}$, and hence $h_{\mathcal V}$ and its extension~$\hat h_{\mathcal V}\colon \mathcal Y_{\mathcal V}[\,\vec p\,] \to \mathcal Y_{\mathcal V}[\,\vec p\,]$. 
\end{itemize}

For each representative $v \in \mathcal V$ and direction $d \in \mathcal D_v$, let $s_{\partial \cdot d} \in G_{\beta \cdot v}$ be an element such that $f(d) = x_v s_{\partial \cdot d} \cdot \partial(d)$;
the element~$s_{\partial \cdot d}$ is unique (trivial arc stabilizers).
The set-bijection~$f^*$ is a pretree-automorphism if and only if $\hat h_v(p_d) = s_{\partial \cdot d} \cdot p_{\partial \cdot d}$ for all $v \in \mathcal V$ and $d \in \mathcal D_v$ (see Appendix~\ref{SecSolns} for \hyperlink{thm - ideal stitch}{details}).

As $\bigcup_{v\in \mathcal V} \mathcal D_v$ is finite and $h_{\mathcal V}$ is an expanding homothety, the system of equations has a unique solution $ \vec p = (p_d \in \overline Y_v : v \in \mathcal V, d \in \mathcal D_v)$ in the metric completion of $\mathcal Y_{\mathcal V}$; these points are not fixed by any ${\mathcal Y}_{\mathcal V}$-loxodromic element in~$\mathcal G$.
Let $T^* = T^*(\vec p\,)$ be the real pretree given by this solution.
Then the induced function $f^*$ is a $\psi$-equivariant pretree-automorphism, as required for Condition~\ref{cond - ideal.automorphism}.
Any pair $(T', f')$ satisfying the same conclusion arises from the same construction and uniqueness follows from uniqueness of~$\vec p$.

\medskip
Finally, assume~$f$ is $F$-expanding. 
%By Lemma~\ref{lem - exp to dense to min}, the convergence $\mathcal F$-action on~$\widehat{\mathcal T}$ is minimal. 
%By the last part of Proposition~\ref{prop - naive stitching}, the convergence-like $\mathcal F$-action on~$\mathcal T^*$ is minimal.
For all $x \in F$ and $n \ge 1$, we need to show the composition $\gamma^* \defeq x \circ (f^*)^n$ is either: 1) elliptic with a unique fixed point and it expands at this fixed point; or 2) loxodromic with an axis that is not shared with any $T^*$-loxodromic element in~$F$.

Let $\gamma \defeq  x \circ f^n \colon T \to T$, then %by construction, any fixed point of $\gamma^*$ is contained in the $\pi$-preimage of a fixed point of $\gamma$.
we can use the $F$-expanding assumption on~$f$.

For the first case, assume~$\gamma$ is loxodromic with an axis that is not shared with any $T$-loxodromic element in~$F$.
Then by construction of~$T^*$ and Condition~\ref{cond - ideal.loxodromics},~$\gamma^*$ is loxodromic with an axis that is not shared with any~$T^*$-loxodromic element in~$F$.
For the rest of the proof, we may assume~$\gamma$ is elliptic with a unique fixed point~$p \in T$ and it expands at~$p$.

For the second case, suppose~$p$ has a trivial stabilizer.
Then $\gamma^*$ is elliptic with a unique fixed point~$\iota(p) \in T^*$ and it expands at~$\iota(p)$.

For the final case, suppose $p$ has a nontrivial stabilizer.
Then $p = y \cdot v$ for some $y \in F$ and $v \in \mathcal V$. 
In particular,~$v$ is fixed by $\beta^n \in \mathrm{Sym}(\mathcal V)$ since~$p = y \cdot v$ is fixed by $\gamma = x \circ f^n$.
In fact, $y \cdot v = x \circ f^n(y \cdot v) = x \psi^n(y)\psi^{n-1}(x_v)\cdots x_{\beta^{n-1} \cdot v} \cdot v$  implies \[s_v \defeq y^{-1}x\psi^n(y)\psi^{n-1}(x_v)\cdots x_{\beta^{n-1} \cdot v} \text{ is in } G_v,\]
and $\gamma^*(\iota(y, q)) = (y, s_v \cdot \hat h_{\beta^{n-1} \cdot v} \circ \cdots \circ \hat h_v(q))$ for all $q \in Y_v[\,\vec p\,]$.

Let $\gamma_v \defeq  s_v \circ h_{\beta^{n-1} \cdot v} \circ \cdots \circ h_v$. %then $\gamma^*(\iota(y, q)) = \iota(y, \gamma_v(q))$. %and~$u^*$ has a fixed point~$\iota(y,q)$ if and only if~$u_v$ has a unique repelling fixed point~$q$.
As $h_{\mathcal V}$ is an expanding homothety, the extension of the composition $\gamma_v$ to the metric completion has a unique fixed point $q_v \in \overline Y_v$ that is repelling.
If~$q_v \in Y_v[\,\vec p\,]$, then $\gamma^*$ is elliptic with a unique fixed point~$\iota(y,q_v)$ and it expands at~$\iota(y,q_v)$. 
Otherwise,~$q_v \notin Y_v[\,\vec p\,]$ and~$\gamma^*$ is loxodromic.
As~$\gamma$ is elliptic,~$\gamma^*$ does not share its axis (in~$T^*$) with any $T$-loxodromic element in~$F$.
As~$\iota(y, q_v)$ is an end of the axis for~$\gamma^*$ and~$q_v$ is in the metric completion~$\overline Y_v$,~$\gamma^*$ cannot share its axis with a conjugate (in~$F$) of a $Y_{v}$-loxodromic element of~$G_v$.
By Condition~\ref{cond - ideal.loxodromics},~$\gamma^*$ does not share an axis with any $T^*$-loxodromic element in~$F$ and we are done.
\end{proof}

%%%%%%%%%%%%%%%%%%%%%%%%%%%%%%%%%%%%%%%%%%%%%%%%%%%%%%%%%%
\nonumsec{Epilogue --- are all limits the same?}\label{SecCanon}
%%%%%%%%%%%%%%%%%%%%%%%%%%%%%%%%%%%%%%%%%%%%%%%%%%%%%%%%%%

As a preview for the sequel~\cite{Mut22}, we end the paper discussing whether the real pretree of Theorem~\ref{thm - limpretrees} is {\it canonical}.
There are two ways ``canonical'' can be interpreted: 
\begin{enumerate}
\item the real pretree produced by the proof of the theorem does not depend on any choices made in the proof --- in a sense, the real pretree was {\it predetermined};
\item limit pretrees (i.e.~pretrees satisfying the conclusion of the theorem) are equivariantly pretree-isomorphic.
\end{enumerate}
A priori, the second requirement seems stronger as, presumably, there might be limit pretrees that do not arise from the blow-up construction of the theorem.
While the jury is still out on the second interpretation, we prove the first one to be true in the sequel!

\medskip
A careful rereading of Theorem~\ref{thm - limpretrees}'s proof reveals that Theorem~\ref{thm - findtt} is the only source of indeterminacy: both blow-up constructions in Theorems~\ref{thm - simple patch} and~\ref{thm - ideal stitch} use uniquely determined attaching points!
In other words, once the hierarchy of expanding forests (produced by Proposition~\ref{prop - topforest}) is fixed, the real pretree given at the end of the proof is already determined.
Unfortunately, the irreducible train tracks (produced by Theorem~\ref{thm - findtt}) used in Proposition~\ref{prop - topforest} do depend on choices;
in general, there are no ``canonical'' train tracks and our main motivation was to address this issue.

To this end, we also suspect that Theorem~\ref{thm - limpretrees} can be strengthened by requiring the real pretree's completion to admit an equivariant surjection from the Cantor set boundary of the free group~$\partial F$ that is continuous with respect to the observers' topology.
First and foremost, this would immediately allow us to upgrade the rigid action on the real pretree to a convergence action on the completion.
Secondly, this would imply the completion of the limit pretree is dual to certain (relative) laminations in the free group (rel. the maximal elliptic subgroup system).
Using the dynamics of the automorphism acting on~$\partial F$, we might be able to show that these laminations, and hence their dual pretrees, are unique.
Alternatively, Bestvina--Feighn--Handel defined {\it topmost attracting laminations}~\cite[Section~6]{BFH00} that can be used to prove a canonical version of Proposition~\ref{prop - topforest}, thereby making the limit pretrees of Theorem~\ref{thm - limpretrees} canonical --- this is the approach taken in the sequel.

\medskip
Fortunately, there is already precedent for these approaches.
When the free group outer automorphism is {\it irreducible} with infinite order, Bestvina--Feighn--Handel proved that a limit tree produced by the proof of Proposition~\ref{prop - topforest} is unique up to equivariant homothety~\cite[Lemma~3.4]{BFH97}.
In particular, {\it the} limit tree is independent of the chosen train track used to define it.
%Already, this implies the first interpretation of ``canonical'' for Theorem~\ref{thm - limpretrees} in this case.
%Taking this further, Levitt--Lustig proved limit trees are canonical in a stronger sense: all trees satisfying the conclusion of Proposition~\ref{prop - topforest} are equivariantly homothetic~\cite{LL03}.
In fact, the limit trees are canonical in a stronger sense: all trees satisfying the conclusion of Proposition~\ref{prop - topforest} are equivariantly homothetic.
Importantly, they do not assume beforehand that these trees are projective limits of iterating a topological representative.
Later, Levitt--Lustig proved the corresponding observers' compactifications of these trees admit canonical equivariant quotient maps from the boundary~$\partial F$~\cite{LL03}.

For an irreducible atoroidal automorphism, Kapovich--Lustig~\cite[Proposition~4.5]{KL15} recently showed that the {\it dynamical} lamination dual to the limit tree is exactly the {\it geometric} lamination defined by Mahan Mj (formerly Mitra)~\cite[p.~388]{Mit97} using the hyperbolic geometry of the corresponding mapping torus~$F \rtimes \mathbb Z$.
%Finally, Martin Reiner proved in his thesis that irreducible atoroidal automorphisms act with uniform north-south dynamics on the projective space of currents~\cite[Theorem~30]{Rei95}.
%See also Caglar Uyanik's recent paper~\cite{Uya14} for a complete and corrected proof.
%The support of the repelling projective current is exactly the lamination dual to the limit tree.

\medskip
When we drop irreducibility but keep the atoroidal assumption, there still are promising results. 
Following works of Mj and Bowditch, Elizabeth Field has recently used the hyperbolicity of the mapping torus to construct a canonical dendrite that is dual to Mj's geometric lamination~\cite{Fie20}. 
We note that the index of the dendrite's interior is an invariant of the atoroidal outer automorphism.
It might follow from the construction that Field's dendrite is a completion of a limit pretree.
On the other hand, Uyanik has proven that atoroidal automorphisms act with generalized north-south dynamics on the projective space of currents~\cite[Theorem~1.4]{Uya19}.
%the support of the repelling projective currents is the diagonal closure of the attracting laminations dynamically defined by Bestvina--Feighn--Handel using topological representatives.
As an extension of Kapovich--Lustig's result, the geometric lamination should be the support of the repelling simplex of projective currents!

\medskip
Finally, when we drop the atoroidal assumption, it seems reasonable to conjecture that there are generalizations of Field's and Uyanik's results that use relative hyperbolicity and relative currents respectively, and these can be unified.

%%%%%%%%%%%%%%%%%%%%%%%%%%%%%%%%%%%%%%%%%%%%%%%%%%%%%%%%%%
%\section{Limit trees: polynomially growing automorphisms}\label{SecLimitspoly}
%%%%%%%%%%%%%%%%%%%%%%%%%%%%%%%%%%%%%%%%%%%%%%%%%%%%%%%%%%

%%%%%%%%%%%%%%%%%%%%%%%%%%%%%%%%%%%%%%%%%%%%%%%%%%%%%%%%%%
%\section{Potential applications}\label{SecApps}
%%%%%%%%%%%%%%%%%%%%%%%%%%%%%%%%%%%%%%%%%%%%%%%%%%%%%%%%%%

%%%%%%%%%%%%%%%%%%%%%%%%%%%%%%%%%%%%%%%%%%%%%%%%%%%%%%%%%%
\appendix
%%%%%%%%%%%%%%%%%%%%%%%%%%%%%%%%%%%%%%%%%%%%%%%%%%%%%%%%%%
%%%%%%%%%%%%%%%%%%%%%%%%%%%%%%%%%%%%%%%%%%%%%%%%%%%%%%%%%%
\section{Index theory for small rigid actions}\label{SecIndexing}
%%%%%%%%%%%%%%%%%%%%%%%%%%%%%%%%%%%%%%%%%%%%%%%%%%%%%%%%%%
In this appendix, we sketch Gaboriau--Levitt's proof of the index inequality~\cite{GL95}.
Although the inequality was originally stated for isometric actions, their proof only used the rigidity of actions ---  the metric plays no other important role.
Thus, we sketch this general version of the index theory.
%{\color{red} Insert reference to non-nesting paper by Levitt?}

\medskip
A rigid $F$-action on a real pretree~$T$ is {\it small} if arc stabilizers are cyclic.
Fix a minimal small rigid $F$-action on a real pretree~$T$.
For each orbit of points $[p] \in F\backslash T$, let $G_p \le F$ be the stabilizer for $p \in T$ and $\#\mathrm{dir}_1[p]$ denote the number of $G_p$-orbits of directions at~$p$ with trivial stabilizers.
Recall that the complexity of~$G_p$ is $c(G_p) = 2 \cdot \mathrm{rank}(G_p) - 1$.
The index at~$[p]$ is:
\[ i[p] \defeq c(G_p) - 1 + \#\mathrm{dir}_1[p] \quad \in ~ \mathbb Z_{\ge 0} \cup \{ \infty \}. \]
Nonnegativity follows from minimality of the action ($F$ is not trivial). 
The \underline{index} is:
\[ i(F \backslash T) \defeq \sum_{[p]\in F\backslash T} i[p] \quad \in ~ \mathbb Z_{\ge 0} \cup \{ \infty \}. \]
%Note that only $F$-orbits of branch points in~$T$ can contribute positive terms to the sum.

Gaboriau--Levitt proved the following:

\begin{restate}{Theorem}{athm:index-inequality}[cf.~{\cite[Theorem~III.2]{GL95}}]
\( i(F \backslash T) < c(F).\)
\end{restate}

\subsection*{\textbullet ~The real pretree associated to a rigid system}

A real pretree is \underline{finite} if it is the {\it convex hull} of a finite subset.
A function $j\colon T \to T'$ of real pretrees is a \underline{morphism} if every closed $T$-interval is a finite union of closed subintervals that are pretree-embedded into~$T'$ by~$j$.

Let $K$ be a finite real pretree and $x_i\colon A_i \to B_i~(1 \le i \le n)$ be pretree-isomorphisms of finite real pretrees in~$K$.
Alternatively, $x_i \colon K \to K$ are {\it partial} pretree-automorphisms whose domains are convex and complete.
Denote this system by~$\mathcal K$.
A system $\mathcal K = (K, \{ x_i \})$ is \underline{nontrivial} if no domain $A_i \defeq \mathrm{dom}(x_i)$ is empty.

Let~$F_{\mathcal K}$ be free group generated by the set $\{ x_i \}_{i=1}^n$.
Note that any element~$x \in F_{\mathcal K}$ (i.e. reduced word in~$x_i^{\pm 1}$) determines a partial pretree-automorphism $x\colon K \to K$ with a possibly empty domain.
The length of $x \in F_{\mathcal K}$ as a reduced word in~$x_i^{\pm 1}$ is denoted~$|x|_{\mathcal K}$.

\begin{athm}[cf.~{\cite[Theorem~I.1]{GL95}}]\label{athm:assoc-pretree} For a nontrivial system~$\mathcal K = (K, \{x_i\})$, there is:
\begin{enumerate} 
\item\label{athm:assoc-pretree:exists} an $F_{\mathcal K}$-action on a real pretree~$T_{\mathcal K}$ by pretree-automorphisms;
\item\label{athm:assoc-pretree:embeds} a pretree-embedding $\iota\colon K \to T_{\mathcal K}$ with $x_i \cdot \iota(a) = \iota(x_i(a))$ for $a \in A_i~(1 \le i \le n)$; and
\item\label{athm:assoc-pretree:covers} the $F_{\mathcal K}$-orbit of $\iota(K)$ covers~$T_{\mathcal K}$, i.e.~$T_{\mathcal K} = F_{\mathcal K} \cdot \iota(K)$.
\end{enumerate}
If another pretree $F_{\mathcal K}$-action on a real pretree~$T'$ satisfies Condition~\ref{athm:assoc-pretree:embeds} with $\iota'\colon K \to T'$, then there is a unique equivariant morphism $j\colon T_{\mathcal K} \to T'$ with $j(\iota(a)) = \iota'(a)$ for $a \in K$.
\end{athm}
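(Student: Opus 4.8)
The plan is to realize $T_{\mathcal K}$ as the \emph{pretree suspension} of $\mathcal K$ --- the exact analogue of Gaboriau--Levitt's tree associated to a system of partial isometries, but retaining only the betweenness relation. I would first build the underlying set. Let $\mathcal C$ be the Cayley graph of $F_{\mathcal K}$ with respect to $\{x_i\}_{i=1}^n$; since $F_{\mathcal K}$ is free on this set, $\mathcal C$ is a simplicial tree with vertex set $F_{\mathcal K}$. Form the disjoint union $F_{\mathcal K}\times K$ and let $T_{\mathcal K}$ be its quotient by the equivalence relation generated by $(xx_i,a)\sim(x,x_i(a))$ for all $x\in F_{\mathcal K}$, all $i$, and all $a\in A_i$; concretely this replaces each vertex $x$ of $\mathcal C$ by a copy $K_x\defeq\iota(\{x\}\times K)$ of $K$ and glues $K_x$ to $K_{xx_i}$ along the copies of $B_i$ and $A_i$. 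Left multiplication on the first factor descends to an $F_{\mathcal K}$-action, and $\iota(a)\defeq[(e,a)]$ is the candidate map; the equivariance $x_i\cdot\iota(a)=\iota(x_i(a))$ and the covering statement~\ref{athm:assoc-pretree:covers} are then immediate, nontriviality of $\mathcal K$ being used only to keep $T_{\mathcal K}$ nonempty.

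The technical heart is a combinatorial lemma describing the equivalence classes. For $P\in T_{\mathcal K}$ let $V(P)\subseteq F_{\mathcal K}$ be the set of vertices $x$ for which $P=x\cdot\iota(a)$ for some $a\in K$. Since a single generating move changes the vertex by one and, in a tree, every loop backtracks, I expect to prove: (i) $V(P)$ is the vertex set of a subtree $\mathcal C_P\subseteq\mathcal C$; (ii) for $x\in V(P)$ the element $a$ with $P=x\cdot\iota(a)$ is unique --- hence $\iota$ is injective, and two distinct translates $x\cdot\iota(K)$, $y\cdot\iota(K)$ overlap only within a translate of a gluing region. This rests on a \emph{bridge lemma}: for a point $a$ outside a complete convex subset $R$ (or for disjoint complete convex $R_1,R_2$) of a real pretree there is a unique \emph{bridge point} $g\in R$ lying in $[a,b]$ for all $b\in R$; completeness of the $A_i,B_i$ is what makes this hold.

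Next I would define the pretree structure. Given $P,Q\in T_{\mathcal K}$, let $z_0,\dots,z_m$ be the finite geodesic in $\mathcal C$ from $\mathcal C_P$ to $\mathcal C_Q$, with $z_0\in V(P)$ and $z_m\in V(Q)$. Inside $K_{z_0}$ join the representative of $P$ to the bridge point of the gluing region leading towards $z_1$; cross each intermediate $K_{z_k}$ along the $K$-interval between its two bridge points; inside $K_{z_m}$ join the last bridge point to the representative of $Q$; and set $[P,Q]$ equal to the union of these finitely many closed $K$-subintervals in $T_{\mathcal K}$ (when $m=0$ this is just the image of a single closed $K$-interval). I would then verify the pretree axioms: symmetry is built in, while thinness and linearity reduce --- using the tree structure of $\mathcal C$ and convexity of the gluing regions --- to the corresponding statements inside single copies of $K$, plus a check that consecutive pieces abut without backtracking. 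I expect this verification to be the main obstacle: it is a case analysis organized by the relative positions in $\mathcal C$ of the subtrees supporting the three points involved, and it is exactly here that rigidity and convexity, rather than any metric, do the work.

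Granting the pretree structure, the remaining points are short. For realness, $[P,Q]$ is a finite, backtracking-free concatenation of closed intervals of the finite real pretree $K$, hence pretree-isomorphic to a closed interval of $\mathbb R$. That $\iota$ is a pretree-embedding and that $F_{\mathcal K}$ acts by pretree-automorphisms both follow from (ii) above together with the evident equivariance of the interval construction. For the universal property, given a pretree $F_{\mathcal K}$-action on a real pretree $T'$ satisfying Condition~\ref{athm:assoc-pretree:embeds} via a pretree-embedding $\iota'\colon K\to T'$, put $j([(x,a)])\defeq x\cdot\iota'(a)$; this is well defined because $(xx_i,a)\sim(x,x_i(a))$ maps to $xx_i\cdot\iota'(a)=x\cdot\iota'(x_i(a))$, it is equivariant with $j\circ\iota=\iota'$, and since $T_{\mathcal K}=F_{\mathcal K}\cdot\iota(K)$ these two properties determine $j$, giving uniqueness. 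Finally $j$ is a morphism: each $[P,Q]$ is a finite union of closed $K$-subintervals, and $j$ carries each of them by a translate of the pretree-embedding $\iota'$, hence by a pretree-embedding.
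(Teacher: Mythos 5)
Your construction is the same as the paper's: both form $T_{\mathcal K}$ as the quotient of $F_{\mathcal K}\times K$ by the relation generated by $(xx_i,a)\sim(x,x_i(a))$, put the left-multiplication action on it, take $\iota(a)=[(1,a)]$, and define $[P,Q]$ as a finite concatenation of closed $K$-subintervals indexed by a reduced word in the $x_i^{\pm}$, with the intermediate endpoints produced by projecting into the domains/ranges of the partial maps. The paper parametrizes this by the unique length-minimal representative $\delta(p)=(x,a)$ and iterated projections $c_1,\dots,c_m$; you parametrize it by the supporting subtree $\mathcal C_P\subseteq\mathcal C$ (the Cayley tree) and bridge points of the gluing convex sets. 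These are the same data organized differently: your ``$V(P)$ is a subtree, with unique $K$-fiber coordinate over each vertex'' is precisely the paper's Lemma~\ref{alem:action-condition}, and your ``bridge point'' is the paper's ``projection'' to $\mathrm{dom}(y_j)$, whose existence and uniqueness is exactly where completeness and convexity of the $A_i,B_i$ enter. The covering property, the realness of intervals, the action by pretree-automorphisms, and the universal property are all handled as you indicate, and both you and the paper leave the full verification of the pretree axioms as a case analysis over relative positions in the Cayley tree. So the proposal is correct and matches the paper's route in all essentials.
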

\noindent The last part states that the pair of the pretree $T_{\mathcal K}$ and $F_{\mathcal K}$-action is universal with respect to Condition~\ref{athm:assoc-pretree:embeds}.
The real pretree~$T_{\mathcal K}$ (along with its $F_{\mathcal K}$-action and pretree-embedding $\iota\colon K \to T_{\mathcal K}$) will be referred to as the system's \underline{associated real pretree}.
\begin{proof}[Sketch of proof]
The set~$T_{\mathcal K}$ is defined through the quotient
\[ \pi\colon F_{\mathcal K} \times K \to T_{\mathcal K},\]
given by the identifications:
\begin{itemize}
\item for each $a \in A_i$ and $x \in F_{\mathcal K}$, $(x x_i, a)$ is identified with $(x, x_i(a))$.
\end{itemize}
$T_{\mathcal K}$ inherits a set $F_{\mathcal K}$-action from the left-multiplication on the first factor of $F_{\mathcal K} \times K$. 
We need to define a pretree structure on~$T_{\mathcal K}$.

Let the length of $(x, a) \in F_{\mathcal K} \times K$ be~$|x|_{\mathcal K}$.
Every point $p \in T_{\mathcal K}$ has a unique shortest element $\delta(p) \defeq (x, a) \in \pi^{-1}(p)$.
Suppose $p, q \in T_{\mathcal K}$ have representatives $\delta(p) = (x, a)$ and $\delta(q) = (y, b)$.

\medskip
\noindent \underline{Case 1: $x = y$}. 
Define $[p,q] \defeq \pi(x, [a,b]_K)$.

\medskip
\noindent \underline{Case 2: $|y^{-1}x|_{\mathcal K} = m \ge 1$}. 
Let $y^{-1} x \in F_{\mathcal K}$ be the reduced word $y_m \cdots y_1$ in $x_i^{\pm 1}$.
Set~$c_1$ to be the {\it projection} of~$a$ to the domain $\mathrm{dom}(y_1) \neq \emptyset$.
Assume~$c_j \in \mathrm{dom}(y_j)$ for $1 \le j < m$, and set~$c_{j+1}$ to be the projection of~$y_j(c_j)$ to $\mathrm{dom}(y_{j+1}) \neq \emptyset$.
Define
\[ [p,q] \defeq \pi(x, [a,c_1]_K) \cup \left( \bigcup_{j=1}^{m-1} \pi(x y_1^{-1} \cdots y_j^{-1}, [y_j(c_j), c_{j+1}]_K) \right) \cup \pi(y, [y_m(c_m), b]_K). \]
We leave it as an exercise to check that $(T_{\mathcal K}, [\cdot, \cdot])$ is a real pretree and $F_{\mathcal K}$ acts on~$T_{\mathcal K}$ by pretree-automorphisms.
This concludes Theorem~\ref{athm:assoc-pretree}(\ref{athm:assoc-pretree:exists}).

Let $1 \in F_{\mathcal K}$ be the identity element.
The map $\iota \colon K \to T_{\mathcal K}$ given by $a \mapsto \pi(1, a)$ is a pretree-embedding (see Lemma~\ref{alem:action-condition} below).
If $a \in A_i$, then by construction
\[ x_i \cdot \iota(a) = \pi(x_i, a) = \pi(1, x_i(a)) = \iota(x_i(a)).\]
Also by construction, $F_{\mathcal K} \cdot \iota(K) = \pi(F_{\mathcal K} \times K) = T_{\mathcal K}$.
This gives us Theorem~\ref{athm:assoc-pretree}(\ref{athm:assoc-pretree:embeds}-\ref{athm:assoc-pretree:covers}).

Finally, we also leave the proof of the universal property as an exercise:
use the fact that any closed interval of~$T_{\mathcal K}$ decomposes (by construction) into finitely many closed subintervals, each contained in a translate of~$\iota(K)$.
\end{proof}

\begin{alem}[cf.~{\cite[Proposition~I.4]{GL95}}]\label{alem:action-condition} 
Let $\mathcal K$ be a nontrivial system and $T_{\mathcal K}$ be the associated real pretree. For $y \in F_{\mathcal K}$ and $a,b \in K$, 
\[ y \cdot \iota(a) = \iota(b) \text{ if and only if } y(a) = b. \]
\end{alem}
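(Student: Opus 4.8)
The statement is an equivalence $y\cdot\iota(a)=\iota(b)\iff y(a)=b$, where the backward direction is essentially immediate and the forward direction is the substantive one. For the backward direction: if $y(a)=b$ then writing $y=y_m\cdots y_1$ in reduced form, the chain of projections $c_1,\dots,c_m$ in the definition of $[\cdot,\cdot]$ (Case 2) simply traces out how $y$ pushes $a$ forward through the domains; since $a$ already lies in every relevant domain (because $y(a)$ is defined), each projection $c_j$ equals the point being projected, and the identifications defining $\pi$ telescope to give $\pi(y,a)=\pi(1,y(a))=\pi(1,b)=\iota(b)$, i.e. $y\cdot\iota(a)=\iota(b)$. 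So the real content is the forward direction.

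\textbf{Forward direction.} Suppose $y\cdot\iota(a)=\iota(b)$, i.e. $\pi(y,a)=\pi(1,b)$. By the definition of the quotient $\pi\colon F_{\mathcal K}\times K\to T_{\mathcal K}$, this means $(y,a)$ and $(1,b)$ are connected by a finite sequence of elementary identifications, each of the form $(xx_i,c)\leftrightarrow(x,x_i(c))$ with $c\in A_i$. The plan is to induct on the number of such elementary moves, or equivalently on a suitable notion of ``syllable length'' of $y$. First I would reduce to the case where $y$ is already reduced: if $y$ is not reduced, a cancellation $\cdots x_i x_i^{-1}\cdots$ can only occur if the point being carried lies in the appropriate domain, and one peels these off using the backward direction already established. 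For reduced $y=y_m\cdots y_1$: the key observation is that the only elementary identification available at $(y,a)$ is the one involving the last letter $y_1$, namely $(y_m\cdots y_1, a)\leftrightarrow(y_m\cdots y_2, y_1(a))$, and this requires $a\in\mathrm{dom}(y_1)$. One then needs to argue that after applying this move, the resulting representative $(y_m\cdots y_2,\,y_1(a))$ is again in reduced form with the point carried lying in the next domain, so that the induction on $m$ goes through; after $m$ steps one arrives at $(1, y_m\cdots y_1(a))=(1,y(a))$, and comparing with $(1,b)$ — both of which are the unique shortest representative $\delta$ of the point, since they have syllable length $0$ — forces $y(a)=b$.

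\textbf{Main obstacle.} The delicate point is controlling the sequence of elementary moves: a priori one could imagine a path in the quotient graph that first \emph{lengthens} the word before shortening it, and one must rule this out. The clean way is to use the uniqueness of the shortest representative $\delta(p)=(x,a)\in\pi^{-1}(p)$ asserted just before the statement of Theorem~\ref{athm:assoc-pretree}: every point has a well-defined $\delta$, and I would show that $\delta(\pi(y,a))$ is obtained from $(y,a)$ purely by \emph{reductions} (left-cancelling syllables $y_1^{-1}y_1$ after pushing the point forward), never by lengthening — this is where the reducedness bookkeeping and the fact that domains are convex and complete (so projections are well-defined and behave functorially under the $x_i$) get used. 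Once $\delta$ is shown to be computed by this greedy forward-push-and-cancel procedure, both sides of $\pi(y,a)=\pi(1,b)$ have the same $\delta$, and chasing through the procedure starting from $(y,a)$ literally computes $y(a)$, yielding $y(a)=b=\delta(\pi(1,b))_2$. This also immediately re-proves that $\iota$ is injective (the $a\mapsto\pi(1,a)$ case with $y=1$), closing the small gap left in the sketch of Theorem~\ref{athm:assoc-pretree}.
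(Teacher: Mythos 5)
The easy direction (if $y(a)=b$ then $\pi(y,a)=\pi(1,b)$) you handle correctly, and you correctly identify that the whole content is the converse. However, there is a genuine gap in the forward direction. Your ``key observation'' --- that the only elementary identification available at $(y,a)$ is the shortening one via $y_1$ --- is false as stated: from any $(y,a)$ with $y=y_m\cdots y_1$ reduced, there is also a \emph{lengthening} move $(y,a)\leftrightarrow(y\ell^{-1},\ell(a))$ for every letter $\ell\ne y_1$ with $a\in\mathrm{dom}(\ell)$. You then acknowledge this in the ``main obstacle'' paragraph, but the resolution you propose does not actually address it. Invoking uniqueness of $\delta$ is not enough by itself: uniqueness tells you each $\pi$-fibre has a unique shortest element, but it does \emph{not} by itself tell you that greedy reduction starting from $(y,a)$ reaches it, which is the claim you need. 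And your stated reason that it does --- ``domains are convex and complete, so projections are well-defined and behave functorially under the $x_i$'' --- is a red herring: convexity/completeness of the $A_i$ and the projections enter in defining the pretree structure $[\cdot,\cdot]$ on $T_{\mathcal K}$, but they play no role in the purely set-theoretic question of which pairs $(x,a)$ get identified. What is actually missing is a confluence argument: you must show that any lengthening move $(y,a)\to(y\ell^{-1},\ell(a))$ is immediately undone by the unique shortening move from $(y\ell^{-1},\ell(a))$ (because $\ell^{-1}$ is now the last letter of $y\ell^{-1}$ and $\ell(a)\in\mathrm{dom}(\ell^{-1})$), so that the greedy-reduction normal form is constant on $\sim$-classes. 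That one-line computation, together with ``no shortening move from $(z,c)$ when $z\neq 1$ means the class contains no pair of length $0$'', closes the gap and also makes the appeal to the asserted uniqueness of $\delta$ unnecessary.

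The paper's proof takes a slightly different, and terser, route: it observes that the relation $(xy,a)\sim(x,b)\iff y(a)=b$ is itself an equivalence relation on $F_{\mathcal K}\times K$ containing the generating identifications, hence equals the generated equivalence relation. The nontrivial point there is transitivity, which amounts to checking that if $a\in\mathrm{dom}(h)$ and $h(a)\in\mathrm{dom}(g)$ then $a\in\mathrm{dom}(gh)$ (reduced product) and $(gh)(a)=g(h(a))$ --- the cancelling letters trace back through values already computed in applying $h$. This is exactly the same combinatorial ingredient (uniqueness of reduced words in a free group) that your confluence step would use, packaged differently. So your approach is viable and morally equivalent, but as written it stops just short of the step that carries the mathematical content.
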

\begin{proof}[Sketch of proof]
The identifications on $F_{\mathcal K} \times K$ used to define $T_{\mathcal K}$ are symmetric but not transitive.
Since the reduced word in~$x_i^{\pm 1}$ representing an element of~$F_{\mathcal K}$ is unique,
the identifications generate the following equivalence relation: for $a, b \in K$ and $x, y \in F_{\mathcal K}$, $(xy, a) \sim (x, b)$ if and only if $y(a) = b$.
\end{proof}

\begin{acor}[cf.~{\cite[Proposition~I.5]{GL95}}]\label{acor:fixed-injective}
Let $\mathcal K$ be a nontrivial system and $T_{\mathcal K}$ be the associated real pretree.
For any nontrivial $y \in F_{\mathcal K}$ and any morphism $j\colon T_{\mathcal K} \to T'$ as in the last part of Theorem~\ref{athm:assoc-pretree}, the restriction of~$j$ to the fixed-point set~$\mathrm{Fix}_{T_{\mathcal K}}(y)$ is injective.
\end{acor}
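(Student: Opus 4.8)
The plan is to argue by contradiction, after first disposing of the easy case. If $y$ is loxodromic in $T_{\mathcal K}$ then $\mathrm{Fix}_{T_{\mathcal K}}(y)=\emptyset$ and there is nothing to prove, so assume $y$ is elliptic. Suppose then that $p\neq q$ lie in $\mathrm{Fix}_{T_{\mathcal K}}(y)$ with $j(p)=j(q)$. Since $j$ is equivariant, replacing $y$ by a conjugate (and $p,q$ by the corresponding translates) does not affect this situation, so using Lemma~\ref{alem:action-condition} I may normalise $p=\iota(a)$ with $a\in K$ and $y(a)=a$, and write $\delta(q)=(z,b)$ for the shortest representative of $q$. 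Among all such counterexamples I will pick one for which $|z|_{\mathcal K}$ is minimal and derive a contradiction.

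Next I would record the local geometry around $[p,q]$. Because $\iota$ and $\iota'$ are pretree-embeddings and $j(\iota(a'))=\iota'(a')$, the restriction of $j$ to $\iota(K)$ is a pretree-embedding, and by equivariance so is its restriction to every translate $x\cdot\iota(K)$. On the other hand, the construction of the intervals of $T_{\mathcal K}$ in the proof of Theorem~\ref{athm:assoc-pretree} exhibits $[p,q]$ as a concatenation of finitely many closed subintervals lying in a reduced sequence of translates $\iota(K)=x_0\iota(K),x_1\iota(K),\dots,x_m\iota(K)=z\iota(K)$, where consecutive translates meet only at a single ``gate'' point $r_i$, and $r_i$ is the image under $x_{i-1}\cdot\iota$ of the projection of the entering point onto the relevant domain --- so the $(i{-}1)$st piece meets that domain in $K$ only at $r_i$. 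In the base case $m=0$ the whole interval $[p,q]$ lies in $\iota(K)$, on which $j$ is injective, forcing $j(p)\neq j(q)$: a contradiction.

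For the inductive step ($m\geq 1$), note that since $j$ embeds each piece, the equality $j(p)=j(q)$ can only be produced by a ``fold'' of $j$ at one of the gates $r_i$, i.e.\ the $j$-images of the $(i{-}1)$st and $i$th pieces must overlap in a nondegenerate subinterval issuing from $j(r_i)$. Meanwhile $y$ acts by left multiplication, hence permutes the translates; as it fixes both endpoints of $[p,q]$ it must preserve the reduced gallery, fix every gate $r_i$, and carry each piece to itself, so that each conjugate $x_i^{-1}yx_i$ restricts to a pretree-automorphism of $K$ fixing both endpoints of the $i$th piece. Pulling the fold at $r_i$ back through the embeddings $j|_{x_{i-1}\iota(K)}$ and $j|_{x_i\iota(K)}$, and using Lemma~\ref{alem:action-condition} to identify $x_{i-1}\iota(K)\cap x_i\iota(K)$ together with the projection property of the gate, I would show that such a fold either violates the reducedness of the gallery or lets one replace $q$ by a $y$-fixed point $q'$ with $j(q')=j(q)=j(p)$ and a strictly shorter gallery, contradicting minimality of $|z|_{\mathcal K}$ in either case.

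The main obstacle will be exactly this last step: transporting a fold of the morphism $j$ in the (a priori arbitrary) target $T'$ back into a combinatorial constraint inside $K$, using only the translate-wise embeddings and Lemma~\ref{alem:action-condition}, and then correctly exploiting the $y$-invariance of the reduced gallery to strip off the outermost translate. This is precisely the delicate bookkeeping that occupies Gaboriau--Levitt's proof of their Proposition~I.5; one should also be a little careful about pieces that happen to be degenerate, and about whether $\mathrm{Fix}_{T_{\mathcal K}}(y)$ is convex when the action is not assumed rigid.
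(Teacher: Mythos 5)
Your plan takes a genuinely different route from the paper's, and its hardest step is left open; in fact the paper avoids it entirely. The paper's proof rests on a cleaner reduction: since $j\circ\iota=\iota'$ is injective and $j$ is equivariant, $j$ restricts to an injection on every translate $x\cdot\iota(K)$, so it suffices to show that $\mathrm{Fix}_{T_{\mathcal K}}(y)$ is contained in a \emph{single} translate of $\iota(K)$. Conjugating $y$ moves $\mathrm{Fix}_{T_{\mathcal K}}(y)$ by the corresponding translate, so one may assume $y$ is a nonempty \emph{cyclically reduced} word and prove $\mathrm{Fix}_{T_{\mathcal K}}(y)\subset\iota(K)$. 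That is done by a short word-combinatorics contradiction, not by a gallery argument: if $p$ is fixed by $y$ and $\delta(p)=(x,a)$ with $|x|_{\mathcal K}\geq 1$, then Lemma~\ref{alem:action-condition} gives $z:=x^{-1}yx$ with $z(a)=a$, and minimality of $\delta(p)$ forces $x$ (as a reduced word) to end in neither $z_1$ nor $z_l^{-1}$, so $y=xzx^{-1}$ is reduced as written and hence not cyclically reduced.

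By contrast, your plan normalises only $p$ into $\iota(K)$ (not $y$ into cyclically reduced form), and then tries to push $q$ toward $\iota(K)$ by an induction on gallery length, analysing folds of $j$ at the gates. You already flag the two real issues with this, and they are genuine: (i) transporting a fold of the morphism $j$ in an arbitrary target $T'$ back into a combinatorial constraint in $K$ is exactly the delicate part, and nothing in your sketch resolves it; (ii) the invariance of the gallery under $y$ is not free, because $y\cdot\iota(K)\neq\iota(K)$ in general (left translation by $y$ does not stabilise $\iota(K)$ even when $y$ fixes a point of it), and $\mathrm{Fix}_{T_{\mathcal K}}(y)$ is not known to be convex without a rigidity hypothesis that the corollary does not assume. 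The paper's argument sidesteps both issues: it never needs convexity of $\mathrm{Fix}_{T_{\mathcal K}}(y)$, never inspects the target $T'$ beyond the single fact that $\iota'$ is injective, and replaces the inductive fold analysis by the cyclically-reduced normalisation plus one application of Lemma~\ref{alem:action-condition}. If you want to salvage your approach, the missing idea is precisely that extra normalisation of $y$; once $y$ is cyclically reduced, there is no gallery to induct on because every fixed point already lies in $\iota(K)$.
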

\begin{proof}[Sketch of proof]
As~$j$ is equivariant, $\iota'$ is injective, and $j(\iota(a)) = \iota'(a)$ for $a \in K$, it is enough to show that $\mathrm{Fix}_{T_{\mathcal K}}(y)$ is contained in some translate of $\iota(K)$.
Without loss of generality, we assume $y \in F_{\mathcal K}$ is a cyclically reduced nonempty word in~$x_i^{\pm 1}$ and show $\mathrm{Fix}_{T_{\mathcal K}}(y)$ is contained in $\iota(K)$ --- or rather, we prove the contrapositive.

Let $p \in T_{\mathcal K}$ be fixed by a nontrivial~$y \in F_{\mathcal K}$ and assume $p \notin \iota(K)$, i.e.~$\delta(p) = (x,a)$ has positive length.
Since $(yx, a)$ and $(x,a)$ represent~$p$, we get $x^{-1}yx(a) = a$ by Lemma~\ref{alem:action-condition}.
Let $z \defeq x^{-1}yx \in F_{\mathcal K}$ be the reduced nonempty word $z_l \cdots z_1$ in~$x_i^{\pm 1}$.
Then $(xz_1^{-1}, z_1(a))$ and $(xz_l, z_l^{-1}(a))$ also represent~$p$.
By minimality of $\delta(p) = (x,a)$, the element $x \in F_{\mathcal K}$ as a reduced word in~$x_i^{\pm 1}$ cannot end with~$z_1$ nor~$z_l^{-1}$.
So $y = x z x^{-1}$ as a reduced word in~$x_i^{\pm 1}$ is not cyclically reduced.
\end{proof}

A partial pretree-automorphism is \underline{rigid} if either its fixed-point set is empty or it fixes no direction at its fixed-point set.
A nontrivial system~$\mathcal K$ is \underline{rigid} if every element of~$F_{\mathcal K}$ determines a rigid partial pretree-automorphism of~$K$.

\begin{alem}\label{alem:action-rigidity}
If~$\mathcal K$ is a rigid system, then the pretree $F_{\mathcal K}$-action on the associated real pretree~$T_{\mathcal K}$ is rigid.
\end{alem}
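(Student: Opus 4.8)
The plan is to push everything down to the partial pretree-automorphisms of $K$, which are rigid by hypothesis, using the explicit presentation of $T_{\mathcal K}$ as a union of translates of $\iota(K)$ glued along translates of the $\iota(A_i)$. First the trivial reductions: if $y = 1$ then $\mathrm{Fix}_{T_{\mathcal K}}(y) = T_{\mathcal K}$ has no directions, and conjugating $y$ affects neither the hypothesis (every conjugate of $y$ again lies in $F_{\mathcal K}$, hence is a rigid partial pretree-automorphism of $K$) nor the conclusion, so I may assume $y$ is a nonempty cyclically reduced word. The proof of Corollary~\ref{acor:fixed-injective} then places $\mathrm{Fix}_{T_{\mathcal K}}(y)$ inside $\iota(K)$, and Lemma~\ref{alem:action-condition} identifies it with $\iota(\mathrm{Fix}_K(y))$ for $\mathrm{Fix}_K(y) = \{a \in \mathrm{dom}(y) : y(a) = a\}$; if this set is empty we are done. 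Next, rigidity of $y$ on $K$ makes $\mathrm{Fix}_K(y)$ convex: its domain is convex, and between two fixed points $y$ restricts to a pretree-automorphism of a real interval fixing both endpoints, which by rigidity of $y|_K$ must be the identity. Since an interval of $T_{\mathcal K}$ with both endpoints in a single translate of $\iota(K)$ stays inside that translate (by the construction in Theorem~\ref{athm:assoc-pretree}), $F \defeq \iota(\mathrm{Fix}_K(y)) = \mathrm{Fix}_{T_{\mathcal K}}(y)$ is convex in $T_{\mathcal K}$, so each direction $D$ at $F$ has a unique attaching point $p_D \in F$.

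Now suppose for contradiction that $y$ fixes a direction $D$ at $F$; then $y$ fixes $p_D$ and the germ of $D$ at $p_D$. That germ lies in some translate $w \cdot \iota(K)$ meeting $\iota(K)$ at $p_D$; conjugating $y$ by $w$ carries $p_D$ into $\iota(K)$ and reduces to the case that the germ lies in $\iota(K)$ itself. The germ is then $\iota(d_0)$ for a $K$-direction $d_0$ at $b \defeq \iota^{-1}(p_D)$, and since $\iota(K) \cap \mathrm{Fix}_{T_{\mathcal K}}(y) = \iota(\mathrm{Fix}_K(y))$ by Lemma~\ref{alem:action-condition} and $p_D$ is the gate of $D$ onto $F$, the point $b$ lies in $\mathrm{Fix}_K(y)$ and $d_0$ is the germ at $b$ of a direction $D_0$ of $K$ at the convex set $\mathrm{Fix}_K(y)$. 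If $d_0$ points into $\mathrm{dom}(y)$, then $y$ acts on $\iota(D_0 \cap \mathrm{dom}(y))$ as $\iota \circ y$; because $\iota$ sends distinct directions of $K$ at $\mathrm{Fix}_K(y)$ into distinct directions of $T_{\mathcal K}$ at $F$, the equality $y(D) = D$ forces $y(D_0) = D_0$, contradicting rigidity of $y|_K$. If instead $d_0$ points into $\mathrm{dom}(y^{-1}) = \mathrm{range}(y)$, run the same argument with $y^{-1}$, which has the same fixed set, is still rigid on $K$, and still fixes $D$.

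The case where $d_0$ meets neither $\mathrm{dom}(y)$ nor $\mathrm{dom}(y^{-1})$ near $b$ is the step I expect to be the real work: $y$ and $y^{-1}$ then push points of $D$ near $p_D$ out of $\iota(K)$ before any comparison with $D_0$ is possible, so one must track $D$ across the tree of translates of $\iota(K)$. I would control this through the induced action of $F_{\mathcal K}$ on the nerve $\mathcal B$ of the covering of $T_{\mathcal K}$ by translates of $\iota(K)$ and of the $\iota(A_i)$: this action is simplicial, hence rigid, so a hyperbolic element has empty fixed set in $T_{\mathcal K}$, and our $y$ must fix some vertex, i.e.\ some translate $w' \cdot \iota(K)$; conjugating by $w'$ returns us to the situation of the previous paragraph, this time with $d_0$ necessarily pointing into the (now full) domain of the relevant conjugate, which yields the contradiction. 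The delicate bookkeeping, and the place to be careful, is the precise way directions of $T_{\mathcal K}$ at $F$ restrict to and are reassembled from directions of the individual translates at a point $p_D$ that may sit on several gluing loci at once; handling that fiddly case is what I would need to write out in full.
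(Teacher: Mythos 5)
Your plan — push the direction $d$ down to the partial maps on $K$ via Lemma~\ref{alem:action-condition} and invoke rigidity of the system — is exactly the paper's plan, and most of your machinery (cyclic reduction, placing $\mathrm{Fix}_{T_{\mathcal K}}(y)$ inside $\iota(K)$ via Corollary~\ref{acor:fixed-injective}, establishing convexity of the fixed set) is a detour the paper does not take. The paper works directly: given \emph{any} direction $d$ at $\mathrm{Fix}_{T_{\mathcal K}}(x)$, take an attaching point $p_d$, choose by the construction of $T_{\mathcal K}$ a small nondegenerate interval $[p_d,q]\subset\{p_d\}\cup d$ lying in a single translate $y\cdot\iota(K)$, and use Lemma~\ref{alem:action-condition} to translate between $x$ on $[p_d,q]$ and the partial map $y^{-1}xy$ on $[a_d,b]_K$, where $p_d=y\cdot\iota(a_d)$, $q=y\cdot\iota(b)$. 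No normal form for $x$ and no global control of $\mathrm{Fix}_{T_{\mathcal K}}(x)$ are needed.

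The genuine gap in your proposal is the unresolved ``fiddly case.'' You stop where the real content is, and the nerve-based argument you sketch as a fallback does not work as stated: there is no guarantee that an elliptic $y$ fixes a vertex of the nerve (it could invert an edge or permute the translates through a fixed point), and after conjugating by $w'$ you have changed the ambient picture, so ``returning to the previous paragraph'' is not a legitimate move. But the good news is that the case you are worried about simply cannot occur, and the reason is a short median argument, not a Bass--Serre analysis. Suppose $x$ fixes $d$ and fixes $p_d$. Take $q\in d$ with $[p_d,q]\subset y\cdot\iota(K)$. Then $x\cdot q\in d$, and since $p_d\notin[q,x\cdot q]$ (both endpoints lie in the direction $d$), the median $m$ of $p_d,q,x\cdot q$ is distinct from $p_d$ and lies in $[p_d,q]\cap[p_d,x\cdot q]$. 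Now $m\in[p_d,q]\subset y\cdot\iota(K)$, so $m=y\cdot\iota(a)$ with $a\in(a_d,b]_K$; and $m\in[p_d,x\cdot q]=x\cdot[p_d,q]\subset xy\cdot\iota(K)$, so $m=xy\cdot\iota(a')$ with $a'\in(a_d,b]_K$ (one checks $a'\neq a_d$ because $m\neq p_d$). Lemma~\ref{alem:action-condition} then gives $y^{-1}xy(a')=a$, so $a'\in\mathrm{dom}(y^{-1}xy)$; convexity of $\mathrm{dom}(y^{-1}xy)$ (it is an intersection of preimages of the convex $A_i$'s) yields $[a_d,a']_K\subset\mathrm{dom}(y^{-1}xy)$, and $y^{-1}xy$ carries $(a_d,a']_K$ into $(a_d,a]_K\subset d_K$. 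Thus $y^{-1}xy$ fixes the $K$-direction $d_K$ at $\mathrm{Fix}_K(y^{-1}xy)$, contradicting rigidity of the system. So $d_0$ always meets the relevant domain near the attaching point, and there is no third case.

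One smaller but real issue: after conjugating by $w$ to move the germ into $\iota(K)$, you continue to invoke the identification $\iota(K)\cap\mathrm{Fix}_{T_{\mathcal K}}(y)=\iota(\mathrm{Fix}_K(y))$ and the convexity of the full fixed set inside $\iota(K)$, but the conjugated element $w^{-1}yw$ need not be cyclically reduced, so those global reductions no longer apply to it. This is another symptom of carrying more structure than the argument needs; once you localize to a single translate and a single attaching point as above, the earlier normalizations become irrelevant and the gap closes.
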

\begin{proof}
Suppose $x \in F_{\mathcal K}$ has fixed points in $T_{\mathcal K}$ and let~$d$ be an arbitrary $T_{\mathcal K}$-direction at $\mathrm{Fix}_{T_{\mathcal K}}(x)$.
Let $p_d \in \mathrm{Fix}_{T_{\mathcal K}}(x)$ be an attaching point for~$d$.
By construction, some nondegenerate interval $[p_d,q] \subset \{ p_d \} \cup d$ is contained in a translate $y \cdot \iota(K)$ for some $y \in F_{\mathcal K}$.
Let $p_d = y \cdot \iota(a_d)$ and $q = y \cdot \iota(b)$ for some $a_d,b \in K$.

For any $p' = y \cdot \iota(a') \in [p_d,q]$, Lemma~\ref{alem:action-condition} implies $x \cdot p' = p'$ if and only if $y^{-1}xy(a') = a'$.
In particular,~$a_d \in \mathrm{Fix}_K(y^{-1}xy)$.
Since no point in~$d$, and hence $(p_d,q] = y \cdot \iota((a_d,b]_K)$, is fixed by~$x$, no point in $(a_d,b]_K$ is fixed by the partial pretree-automorphism $y^{-1}xy$.
So $(a_d,b]_K$ determines a $K$-direction~$d_K$ at~$\mathrm{Fix}_K(y^{-1}xy)$.

By rigidity of the system, $y^{-1}xy$ (partially applied to $K$) does not fix $d_K$;
therefore, the element~$x\in F_{\mathcal K}$ (acting on $T_{\mathcal K}$) does not fix~$d$ (Lemma~\ref{alem:action-condition}) and we are done.
\end{proof}

Assume the system~$\mathcal K$ is rigid and~$T_{\mathcal K}$ is the associated real pretree.
Let $S \subset K$ be the finite subset of vertices (i.e.~branch points or {\it endpoints}) in $K$, $A_i$, and $B_i~(1 \le i \le n)$.

\begin{alem}[cf.~{\cite[Proposition~I.8]{GL95}}]\label{alem:branches-stabs} If $p \in T_{\mathcal K}$ is a branch point, then the $F_{\mathcal K}$-orbit of~$p$ contains a point of $\iota(S)$ and there are finitely many $G_p$-orbits of directions at~$p$.
\end{alem}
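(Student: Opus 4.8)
The plan is to analyze a branch point $p \in T_{\mathcal{K}}$ through its shortest representative $\delta(p) = (x,a) \in F_{\mathcal{K}} \times K$ and show that the ``non-trivial'' structure at $p$ (branching, or directions that survive under iteration) must already be visible in the finite set $S$. First I would reduce to the case $x = 1$, i.e.~$p = \iota(a)$ for some $a \in K$, by replacing $p$ with $x^{-1} \cdot p$ using equivariance. Then the directions at $p = \iota(a)$ fall into two types: those coming from directions at $a$ internal to $\iota(K)$, and those coming from ``outgoing'' translates $y \cdot \iota(K)$ glued on via the identifications defining $T_{\mathcal{K}}$. By the construction of the closed intervals in Theorem~\ref{athm:assoc-pretree}, a new translate $y \cdot \iota(K)$ can only be reached from $\iota(K)$ through the domains $A_i$ or ranges $B_i$ of the partial automorphisms; concretely, $[\iota(a), y\cdot\iota(b)]$ for $y \ne 1$ must pass through the projection of $a$ to some $\mathrm{dom}(x_i)$, and unless $a$ itself lies in $S$ this projection is a single point of $S$ (an endpoint of $A_i$ or $B_i$), not $a$. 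Pushing this through: if $a \notin S$, then $a$ is not a branch point of $K$, so at most two directions at $\iota(a)$ come from inside $\iota(K)$, and any further direction is ``shielded'' behind a point of $S$ lying strictly between $\iota(a)$ and that direction — hence $\iota(a)$ has at most two directions and is not a branch point. Contrapositively, a branch point lies in $\iota(S)$ up to the $F_{\mathcal{K}}$-orbit, which is the first claim.

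For the second claim — finiteness of $G_p$-orbits of directions at $p$ — I would combine the first claim with rigidity (Lemma~\ref{alem:action-rigidity}) and the fact that $S$ is finite. After conjugating we may assume $p = \iota(a)$ with $a \in S$. Each direction $d$ at $p$ has a unique attaching point, and by the interval analysis above it is determined by a germ: either a $K$-direction at $a$ (finitely many, since $K$ is a finite real pretree so $a$ has finitely many directions) or a ``transition'' through one of the finitely many domain/range endpoints in $S$ into an adjacent translate. So the set of directions at $p$ is covered by finitely many ``types,'' and two directions of the same type are related by an element of $G_p$: indeed if $d, d'$ both transition through the same point $s \in S$ into translates $y\cdot\iota(K)$ and $y'\cdot\iota(K)$, then $y'y^{-1}$ (or a suitable conjugate corrected by the stabilizer of the germ) fixes $p$ and carries $d$ to $d'$. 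Making this last identification precise — that ``same type'' really does mean ``same $G_p$-orbit'' — is where I'd have to be careful, using Lemma~\ref{alem:action-condition} to translate the statement $y \cdot \iota(a) = \iota(a)$ into the partial-automorphism statement $y(a) = a$, so that the relevant group elements genuinely lie in $G_p = \mathrm{Stab}_{F_{\mathcal{K}}}(p)$.

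The main obstacle I anticipate is the bookkeeping in the interval-decomposition argument: showing cleanly that if $\delta(p)=(x,a)$ is the shortest representative and $a \notin S$, then every direction at $p$ beyond the (at most two) internal ones is separated from $p$ by a point whose orbit meets $\iota(S)$ — this requires carefully tracking how the reduced word $y^{-1}x$ forces the geodesic $[\iota(a), y\cdot\iota(b)]$ to exit through $\mathrm{dom}(x_i) \cap \iota(K) \subset \iota(S)$ first (using minimality of $\delta(p)$ to rule out the degenerate situation where $a$ is already in such a domain). Once that separation statement is established, both conclusions follow quickly. I would also note that the rigidity hypothesis is what guarantees the attaching points are unique (so ``the germ of $d$ at $p$'' is well-defined), which keeps the type-counting argument honest.
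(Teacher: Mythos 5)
Your reduction to $p = \iota(a)$ is fine, but the pivotal step — that if $a \notin S$ then $\iota(a)$ has at most two directions and is therefore not a branch point — does not hold, and the place it breaks is exactly the clause ``unless $a$ itself lies in $S$ this projection is a single point of $S$ (an endpoint of $A_i$ or $B_i$), not $a$''. If $a$ lies in the \emph{interior} of some domain $A_i$ (or range $B_i$) yet still avoids $S$, the projection of $a$ to $A_i$ is $a$ itself. In that case there is no shield strictly between $\iota(a)$ and the translate $x_i^{-1}\cdot\iota(K)$: that translate is attached directly at $\iota(a)$, since $\iota(a) = x_i^{-1}\cdot\iota(x_i(a))$. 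If $x_i(a)$ is a branch point of $K$ — which is entirely possible even when $a \notin S$ (e.g.~take $K$ a tripod with center $c$, $A_i$ one leg, $B_i$ an arc through $c$ spanning parts of the other two legs, and $a$ the $x_i$-preimage of $c$) — then $\iota(a)$ inherits an extra direction from $x_i^{-1}\cdot\iota(K)$ and \emph{is} a branch point. This does not contradict the lemma, since $\iota(a) = x_i^{-1}\cdot\iota(c)$ with $c\in S$, so the orbit of $\iota(a)$ does meet $\iota(S)$; but it does contradict the intermediate statement your argument rests on.

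The paper's sketch avoids this by never trying to show $a \in S$. Given a direction at $p = \iota(a)$ that exits $\iota(K)$ immediately (so $[p,q]\cap\iota(K)=\{p\}$ for $q$ in that direction), it picks the \emph{shortest} reduced word $y = y_m\cdots y_1$ whose translate $y\cdot\iota(K)$ covers a nondegenerate initial segment $[p,p']$, writes $p = y\cdot\iota(b)$ with $y(b)=a$ via Lemma~\ref{alem:action-condition}, and uses minimality of $y$ to force $[b,b']_K\cap\mathrm{dom}(y_1)=\{b\}$, whence $b$ is either an endpoint of $\mathrm{dom}(y_1)$ or a branch point of $K$ and so $b\in S$. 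Allowing $y\neq 1$ is exactly what handles the troublesome case (there $y = x_i^{-1}$ and $b = x_i(a)$). The same gap propagates to your argument for the second conclusion: once $p$ admits several representations $y\cdot\iota(b)$, directions at $p$ are not classified merely by ``a $K$-direction at $a$, or a transition through a point of $S$''. The paper instead argues that the $F_{\mathcal K}$-orbit of any direction at a branch point contains an end of $\iota(K\setminus S)$, of which there are only finitely many, and that bound dominates the number of $G_p$-orbits of directions at $p$.
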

\begin{proof}[Sketch of proof]
Suppose $[p,q] \cap \iota(K) = \{ p \}$ and let $p = \iota(a)$ with $a \in K$.
Let $y \in F_{\mathcal K}$ be the shortest reduced word $y_m \cdots y_1$ in $x_i^{\pm 1}$ whose translate $y \cdot \iota(K)$ covers a nondegenerate subinterval $[p, p']$.
So $[p, p'] = y \cdot \iota([b, b']_K)$ for some $b, b' \in K$ and $y(b) = a$ (Lemma~\ref{alem:action-condition}).
Minimality of $y$ and $[p,p'] \cap \iota(K) = \{ p \}$  imply $[b, b']_K \cap \mathrm{dom}(y_1) = \{ b \}$.
Then $b \in S$.

Deduce that the $F_{\mathcal K}$-orbit of any branch point in~$T_{\mathcal K}$ intersects~$\iota(S)$. Similarly, the $F_{\mathcal K}$-orbit of any direction at a branch point contains an ``end'' of $\iota(K \setminus S)$; therefore, the number of $G_p$-orbits of directions at~$p$ is at most the number of ends of $\iota(K) \setminus \left(\iota(S) \cap (F_{\mathcal K} \cdot p)\right)$.
\end{proof}

\subsection*{\textbullet ~Geometric actions}

Fix a basis $\{ x_1, \ldots, x_n \}$ for the free group $F$. 
Let~$T$ be a real pretree with a rigid $F$-action.
For any finite real pretree $K \subset T$, we get a system $\mathcal K = (K, \{ x_i \})$ by letting~$x_i \in F$ partially act on~$K$ as the partial pretree-automorphism given by restricting the action of~$x_i$ on~$T$ to the domain $\left( x_i^{-1}\cdot K \right) \cap K$.
The free groups $F_{\mathcal K}$ and $F$ will be identified as they have the ``same'' basis~$\{ x_i \}$.

Choose a finite real pretree $K \subset T$ so that the system $\mathcal K$ is nontrivial, in which case it will be a rigid system.
%If $T = T_{\mathcal K}$ for some system $\mathcal K = (K, \{ x_i \})$, then $\mathcal K$ is naturally identified with $(\iota(K), \{ x_i \})$ because $(x_i \cdot \iota(K)) \cap \iota(K) = \iota(A_i)$ by Lemma~\ref{alem:action-condition}.
By Theorem~\ref{athm:assoc-pretree} and Lemma~\ref{alem:action-rigidity}, the rigid system~$\mathcal K$ has an associated real pretree, denoted~$T_{K}$, with a rigid $F$-action.
By Lemma~\ref{alem:action-condition}, the rigid $F$-action on~$T_K$ is small if the rigid $F$-action on~$T$ was small.

\medskip
Now suppose the $F$-action on (a nondegenerate)~$T$ is minimal and pick a basepoint $p_0 \in T$. By minimality of the rigid $F$-action on~$T$, the point~$p_0$ is in the $T$-axis for some loxodromic element $x \in F$.
For any integer $m$, define $K_m$ to be the convex hull of the partial orbit $\{ y \cdot p_0 : |y| \le m \}$ and let $T_m$ be the real pretree associated to $(K_m, \{ x_i \})$.
%By minimality, $T$ is the ascending union of the chain $K_1 \subset K_2 \subset \cdots \subset T$.
Choose any $m \ge |x|$, then the interval $[x^{-1} \cdot p_0, p_0] \subset K_m$ is contained in the domain of $x \colon K_m \to K_m$ (as the composition of partial pretree-automorphisms $x_i \colon K_m \to K_m$).
Since $x([x^{-1} \cdot p_0, p_0]) = [p_0, x \cdot p_0]$,
we deduce~$x$ is $T_m$-loxodromic and its axis in~$T_m$ contains~$\iota_m(p_0)$.
Consequently, the rigid $F$-action on~$T_m$ is minimal.

Using the universal property in Theorem~\ref{athm:assoc-pretree}, the chain $K_1 \subset K_2 \subset \cdots \subset T$ gives us a direct system of equivariant morphisms $T_1 \to T_2 \to \cdots \to T$.
The real pretree~$T$ is a \underline{strong limit} of this  direct system:
each closed interval $I \subset T_1$ has an image under the morphism $T_1 \to T_{m}$~(for large enough $m$) that is embedded into~$T$ by the morphism $T_{m} \to T$.
For the proof, pick a closed interval $I \subset T_1$;
its image under the morphism $j_1\colon T_1 \to T$ is a finite real pretree, which is contained in~$K_m$ for large~$m$;
therefore, $\iota_m(j_1(I)) \subset \iota_m(K_m)$, the image of~$I$ under $T_1 \to T_m$, is embedded into~$T$ (with image~$j_1(I)$) by $T_m \to T$ as the latter restricts to a pretree-embedding of~$\iota_m(K_m)$.

\begin{alem}[cf.~{\cite[Proposition~II.1]{GL95}}]\label{alem:geometric-action} Let~$T$ be a real pretree with a minimal rigid $F$-action. The following conditions are equivalent:
\begin{enumerate}
\item\label{alem:geometric-action:geometric} $T$ is equivariantly pretree-isomorphic to~$T_{\mathcal K}$ for some rigid system $\mathcal K = (K, \{ x_i \})$; and
\item\label{alem:geometric-action:trivial} $T$ can only be a strong limit in a trivial way: if $T$ is a strong limit of a direct system of equivariant morphisms $T_1 \to T_2 \to \cdots \to T$ with minimal rigid $F$-actions on~$T_j$, then $T_{m} \to T$ is an equivariant pretree-isomorphism for some $m \ge 1$.
\end{enumerate}
\end{alem}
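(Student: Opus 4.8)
The plan is to argue through the canonical direct system $T_1 \to T_2 \to \cdots \to T$ constructed just before the lemma, in which $T_m$ is the real pretree associated to the rigid system $\mathcal K_m = (K_m, \{x_i\})$ and $T$ is a strong limit. The implication $(\ref{alem:geometric-action:trivial}) \Rightarrow (\ref{alem:geometric-action:geometric})$ is then immediate: condition~(\ref{alem:geometric-action:trivial}) applies to this very system, so some morphism $T_m \to T$ is an equivariant pretree-isomorphism; since the $F$-action on~$T$ is nondegenerate so is the one on~$T_m$, hence $\mathcal K_m$ is a nontrivial --- and therefore rigid --- system, and $T \cong T_{\mathcal K_m}$ gives~(\ref{alem:geometric-action:geometric}).

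For $(\ref{alem:geometric-action:geometric}) \Rightarrow (\ref{alem:geometric-action:trivial})$, identify $T$ with $T_{\mathcal K}$ for a rigid system $\mathcal K = (K, \{x_i\})$, and write $\iota\colon K \hookrightarrow T$ for the pretree-embedding of Theorem~\ref{athm:assoc-pretree}, with finite vertex set $S \subset K$; let $T_1 \to T_2 \to \cdots \to T$ be an arbitrary strong limit with minimal rigid $F$-actions, and $j_m\colon T_m \to T$ the morphisms. I will use two elementary facts about morphisms of real pretrees: they take convex sets to convex sets (the pieces of a decomposed interval map onto subintervals whose union, by thinness, contains the interval between the images of the endpoints), and a bijective morphism whose inverse is also a morphism is a pretree-isomorphism. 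The first fact forces each $j_m(T_m)$ --- nonempty, $F$-invariant, convex --- to equal $T$ by minimality, so every $j_m$ is onto.

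Next I lift the finite real pretree $\iota(K)$ into some $T_m$. Using surjectivity of $j_m$, pick for each $s \in S$ a point $\widetilde s_m \in T_m$ over $\iota(s)$, and let $\widetilde K_m \subset T_m$ be the finite convex hull of $\{\widetilde s_m\}_{s \in S}$ together with its $x_i^{\pm 1}$-translates; taking these to be the images in $T_m$ of a fixed finite subpretree of some $T_{m_0}$, the strong-limit property makes $j_m$ restrict to a pretree-embedding of $\widetilde K_m$ once $m$ is large. Then $j_m(\widetilde K_m)$ is convex and contains $\iota(S)$, hence contains $\iota(K)$, so $\iota_m \defeq (j_m|_{\widetilde K_m})^{-1}\circ\iota\colon K \to T_m$ is a pretree-embedding with $j_m\circ\iota_m = \iota$. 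A short verification --- using injectivity of $j_m$ on $\widetilde K_m$ (the reason the $x_i^{\pm 1}$-translates were included), equivariance of $j_m$, and the fact that $\iota$ intertwines the partial $x_i$-action with the $F$-action on $T$ --- shows $x_i\cdot\iota_m(a) = \iota_m(x_i(a))$ for all $a$ in $\mathrm{dom}(x_i|_K)$. So the universal property of $T_{\mathcal K} = T$ yields a unique equivariant morphism $g\colon T \to T_m$ with $g\circ\iota = \iota_m$; then $j_m\circ g\colon T \to T$ is an equivariant morphism restricting to the identity on $\iota(K)$, hence $j_m\circ g = \mathrm{id}_T$ by the uniqueness clause. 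Therefore $g$ is injective, $g(T)$ is a nonempty $F$-invariant convex subset of $T_m$ equal to $T_m$ by minimality, and $g$ is a bijective morphism with morphism inverse $j_m$; by the second elementary fact $g$ --- and hence $j_m$ --- is an equivariant pretree-isomorphism, which is~(\ref{alem:geometric-action:trivial}).

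The main obstacle is producing a lift $\iota_m$ that genuinely intertwines the partial pretree-automorphisms $x_i$: a pretree-embedding of a finite real pretree is not determined by its values on a spanning vertex set (a pretree-automorphism of an interval can be an arbitrary order-preserving bijection), so one cannot lift $S$ carelessly --- one must enlarge $\widetilde K_m$ by the $x_i^{\pm 1}$-translates of the lifted vertices and then exploit injectivity of $j_m$ there to pin down $x_i\cdot\iota_m$ edge by edge on each $\mathrm{dom}(x_i|_K)$. Everything else is routine bookkeeping with morphisms, minimality, and the universal property of Theorem~\ref{athm:assoc-pretree}.
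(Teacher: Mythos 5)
Your proposal is correct and follows essentially the same route as the paper's proof. For $(\ref{alem:geometric-action:trivial}) \Rightarrow (\ref{alem:geometric-action:geometric})$ both use the canonical direct system built from the convex hulls $K_m \subset T$, and for $(\ref{alem:geometric-action:geometric}) \Rightarrow (\ref{alem:geometric-action:trivial})$ both lift $\iota(K)$ together with its $x_i^{\pm1}$-translates into some $T_m$ via the strong-limit property, apply the universal property of Theorem~\ref{athm:assoc-pretree} to obtain an equivariant morphism $g\colon T \to T_m$ with $j_m \circ g = \mathrm{id}_T$, and then conclude $j_m$ is a pretree-isomorphism. Your writeup is more explicit than the paper's about why the lift $\iota_m$ genuinely intertwines the partial $x_i$-action (the point about a finite real pretree not being determined by a pretree-embedding of its vertex set alone is a real subtlety the paper glosses over with ``pretree-isomorphic copy''), and about verifying surjectivity of $j_m$ and the two elementary facts about morphisms; these are correct and fill in details the paper suppresses, but they do not change the structure of the argument.
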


\noindent A real pretree~$T$ with a minimal rigid $F$-action is \underline{geometric} if these conditions hold.
By the second characterization, ``geometricity'' will be independent of the choice of basis for~$F$.

\begin{proof}[Sketch of proof] {~}

$(\ref{alem:geometric-action:trivial} \implies \ref{alem:geometric-action:geometric})$: 
By the preceding discussion, $T$ is a strong limit of a direct system of equivariant morphisms $T_1 \to T_2 \to \cdots \to T$ and the rigid $F$-action on $T_j$ is minimal for large enough~$j$.
Since we are assuming strong limits are trivial, we have $T_m \to T$ is an equivariant pretree-isomorphism for some large~$m$.
Recall that $T_m$ is associated to a rigid system given by a finite real pretree $K_m \subset T$ and we are done.

$(\ref{alem:geometric-action:geometric} \implies \ref{alem:geometric-action:trivial})$:
Let $\rho\colon T_{\mathcal K} \to T$ be an equivariant pretree-isomorphism for some rigid system $\mathcal K = (K, \{ x_i \})$.
In particular, $T$ satisfies the universal property in Theorem~\ref{athm:assoc-pretree} with respect to the pretree-embedding $\rho \circ \iota \colon K \to T$.
We need to show that an arbitary strong limit $T_1 \to T_2 \to \cdots \to T$ of minimal rigid actions is trivial.
As a strong limit of minimal actions, we can lift the union of $\rho(\iota(K)) \subset T$ and its translates $x_i \cdot \rho(\iota(K)) \subset T ~(1 \le i \le n)$ to a pretree-isomorphic copy in $T_m$ for some large enough $m$.
By the universal property, there is a unique equivariant morphism $T \to T_m$.
So the equivariant morphism $T_m \to T$ (from the strong limit) must be a pretree-isomorphism and hence the strong limit is trivial.
\end{proof}

\subsection*{\textbullet ~Counting branch points}

We are now ready to prove the index inequality.

%In this section, $T$ is always a real pretree with a minimal small rigid $F$-action.

%\begin{alem}[cf.~{\cite[Proposition~III.1]{GL95}}]\label{alem:indexes}
%For any point $p \in T$, $i[p] > 0$ implies~$p$ is a branch point.
%\end{alem}

%\begin{proof} {~}

%\noindent \underline{Case 1: $G_p$ is not cyclic}. 
%Then $i[p] \ge 2$. 
%As direction stabilizers are cyclic (small rigid action), they have infinite index in~$G_p$; 
%therefore, $p$ is a branch point. 

%\noindent \underline{Case 2: $G_p$ is infinite cyclic}.
%Then $i[p] \ge 0$.
%If $i[p] > 0$, then some direction at~$p$ has trivial stabilizers and hence an infinite $G_p$-orbit;
%therefore, $p$ is a branch point.

%\noindent \underline{Case 3: $G_p$ is trivial}. 
%Then $i[p] \ge 0$ (minimality of action), with equality if and only if $p$ is bivalent. 
%\end{proof}

\begin{athm}[cf.~{\cite[Theorem~III.2]{GL95}}]\label{athm:index-inequality} For a minimal small rigid $F$-action on a real pretree~$T$,

\begin{enumerate}
\item\label{athm:index-inequality:geometric} \( i(F \backslash T) = c(F) - 1\) if~$T$ is geometric, and
\item\label{athm:index-inequality:otherwise} \( i(F \backslash T) < c(F) - 1\) otherwise.
\end{enumerate}
\end{athm}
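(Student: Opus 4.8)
The plan is to prove the two clauses in order, the first by a direct Euler-characteristic count and the second by approximation.

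\emph{The geometric case.} Suppose $T$ is geometric, so $T$ is equivariantly pretree-isomorphic to $T_{\mathcal K}$ for some rigid system $\mathcal K=(K,\{x_i\}_{i=1}^n)$, and $F=F_{\mathcal K}$ is free of rank $n$, whence $c(F)-1=2n-2$. By Lemma~\ref{alem:branches-stabs} every $F$-orbit of a branch point of $T_{\mathcal K}$ meets the finite set $\iota(S)$ and each branch point has only finitely many $G_p$-orbits of directions, so $i(F\backslash T)$ is a finite sum, which I would compute by induction on $n$. If some generator $x_n$ has domain $A_n\subsetneq K$, then $\mathcal K'=(K,\{x_1,\dots,x_{n-1}\})$ is a rigid system with $\mathrm{rank}(F_{\mathcal K'})=n-1$, and $T_{\mathcal K}$ is obtained from $T_{\mathcal K'}$ by an equivariant HNN-type move: attach an orbit of arcs joining the orbit of $a_n$ to the orbit of $x_n(a_n)$. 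Removing $x_n$ decreases $c(F)-1$ by $2$; on the index side the move creates exactly one new $G_p$-orbit of trivial-stabilizer direction at each of the two attaching orbits, so it changes $i(F\backslash T)$ by $2$ as well — verifying this local bookkeeping is the only routine point. If instead every $x_i$ has domain all of $K$, each $x_i$ is a global pretree-automorphism of the finite real pretree $K$, the quotient decomposes as a finite graph of groups, and the equality $i(F\backslash T)=2n-2$ is immediate; this also serves as the base of the induction.

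\emph{The non-geometric case.} Here I would use the direct system $T_1\to T_2\to\cdots\to T$ built before Lemma~\ref{alem:geometric-action}, in which $T$ is a strong limit and, for all large $m$, the $F$-action on $T_m$ is minimal; each $T_m$ is the real pretree associated to the rigid system $(K_m,\{x_i\})$, hence geometric, so by the first case $i(F\backslash T_m)=c(F)-1$. Since $T$ is not geometric, Lemma~\ref{alem:geometric-action}(\ref{alem:geometric-action:trivial}) guarantees that none of the canonical equivariant morphisms $j_m\colon T_m\to T$ is a pretree-isomorphism — otherwise $T\cong T_{K_m}$ would be geometric. It therefore suffices to establish a monotonicity statement: for a surjective equivariant morphism $j\colon T'\to T$ of minimal small rigid $F$-pretrees arising in such a strong limit, $i(F\backslash T)\le i(F\backslash T')$, with equality only when $j$ is a pretree-isomorphism. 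Applying this with $T'=T_m$ yields $i(F\backslash T)<c(F)-1$ — and, incidentally, that $i(F\backslash T)$ is finite in the first place.

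\emph{The main obstacle.} The monotonicity statement is the heart of the argument and the step I expect to be hardest. The approach is to realize $j$ as a (possibly infinite) composition of elementary equivariant folds — each identifying two directions at a branch point, hence identifying the arcs they span and merging the endpoint orbits — and to check that a single fold changes $\sum_{[p]}(c(G_p)-1+\#\mathrm{dir}_1[p])$ by a nonpositive amount: a fold removes a $G_p$-orbit of direction, and although merging two endpoint orbits can raise $\sum c(G_p)$, the injectivity of $j$ on fixed-point sets (Corollary~\ref{acor:fixed-injective}) together with rigidity (Lemma~\ref{alem:action-rigidity}) bounds the rank of the merged stabilizer so that the net change is $\le 0$, with equality only for ``trivial'' folds that do not alter the tree up to equivariant isomorphism. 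The genuinely delicate points are: (i) the equivariance of folds, since a fold may identify a point with one of its own $F$-translates and this must be tracked through the orbit count; (ii) passing the inequality to the limit, for which one uses that only finitely many orbits of branch points and directions are ever affected (Lemma~\ref{alem:branches-stabs} applied in each $T_m$), so the index stays a finite sum throughout; and (iii) showing the equality case forces the folding sequence to terminate, so that $T$ is equivariantly pretree-isomorphic to some $T_m$ and hence geometric — the contrapositive of what is needed. This is the pretree analogue of Gaboriau--Levitt's count in~\cite[Section~III]{GL95}, and rigidity is the only feature of the action it uses.
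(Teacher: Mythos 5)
Your approach diverges from the paper's in both halves, and each half has a concrete gap.

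For the geometric case, the paper does not induct on rank. It fixes a point $p\in T_{\mathcal K}$, builds the ``Cayley graph'' $\mathcal O_p$ on the set $V_p$ of representatives of $p$ in $K$ (edges $b\overset{x_i}\to c$ when $x_i(b)=c$) and the blow-up $\mathcal O_p'$ with edge multiplicities given by valences $\nu_{A_i}(b)$, identifies $\pi_1(\mathcal O_p)\cong G_p$ and $\pi_0(\mathcal O_p')\cong G_p\backslash\pi_0(T_{\mathcal K}\setminus\{p\})$ (Lemma~\ref{alem:index-inequality}), then applies the Euler-characteristic identity twice and the valence identity $\sum_{h\in H}(\nu_H(h)-2)=-2$ for a finite real pretree to get the closed-form $i[p]=\sum_{b\in V_p}(\nu_K(b)-2)-\sum_i\sum_{b\in V_p\cap A_i}(\nu_{A_i}(b)-2)$, which telescopes to $2n-2$ on summing over orbits. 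Your inductive replacement has a problem at exactly the step you call ``routine bookkeeping'': when $A_n$ is a nondegenerate finite real pretree rather than a single point, adding the generator $x_n$ to the system is a fold of the $F_{\mathcal K'}$-translates of $K$ along translates of $A_n$, not an attachment of an orbit of arcs. A fold can create new branch points at the boundary of $A_n$ in $K$, change several direction orbits at once, and alter $\sum c(G_p)$ nontrivially; the claimed change by exactly $2$ is not justified. Moreover, you need the $F_{\mathcal K'}$-action on $T_{\mathcal K'}$ to remain minimal and small to invoke the inductive hypothesis, and neither is automatic once a generator is dropped.

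For the non-geometric case, the monotonicity-via-folding lemma you postpone is the real content and I don't think it can be taken for granted: decomposing an equivariant morphism of real pretrees as a (possibly transfinite) composition of elementary folds is a deep technical matter even for $\mathbb R$-trees, and here you would need it without a metric. The paper sidesteps this entirely. It proves $i(F\backslash T)\le c(F)-1$ by lifting finitely many orbit representatives of branch points $p$ (together with a free basis of a free factor of $G_p$ and trivial-stabilizer directions $d_j$) to $p'\in T_m$ for $m$ large, observing $i[p]\le i[p']$ and applying Part~1 to $T_m$. It then assumes equality and uses two facts to derive a contradiction from the non-injectivity of $j_m\colon T_m\to T$: the equality forces the two directions identified by $j_m$ to have nontrivial stabilizers, whose union fixes a nondegenerate arc straddling the branch point; but Corollary~\ref{acor:fixed-injective} says $j_m$ is injective on fixed-point sets of nontrivial elements, a contradiction. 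That corollary is the engine of the strict inequality, and it doesn't appear in your proposal. If you want to salvage your approach, the place to look is precisely Corollary~\ref{acor:fixed-injective} together with the lifting argument, rather than a folding sequence, because injectivity on fixed-point sets is exactly the ``rigidity'' ingredient that makes the equality case impossible for a non-geometric limit.
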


\begin{proof}[Proof of Part~\ref{athm:index-inequality:geometric}]
Suppose $\mathcal K = (K, \{ x_i\colon A_i \to B_i \})$ is a rigid system and the rigid $F$-action on the associated real pretree $T_{\mathcal K}$ is small and minimal.
For a finite real pretree~$H$, the valence of $h \in H$ --- denoted $\nu_H(h)$ --- is the number of directions at $h$.
The valences satisfy the identity:
%\begin{equation}\label{aeq:index-inequality:finitevalence}
\(\sum_{h \in H}\left( \nu_H(h) - 2 \right) = -2.\)
%\end{equation}

Fix a point $p \in T_{\mathcal K}$.
Then $p = x \cdot \iota(a)$ for some $x \in F$ and $a \in K$ by Theorem~\ref{athm:assoc-pretree}(\ref{athm:assoc-pretree:covers}).
Define $V_p \defeq \{\, b \in K : y \cdot \iota(a) = \iota(b) \text{ for some } y \in F \,\}$.
The ``Cayley graph''~$\mathcal O_p$ is the oriented labelled graph with vertex set~$V_p$ and an oriented labelled edge $b \overset{x_i}\to c$ if $x_i(b) = c$.
The Cayley graph is connected (Lemma~\ref{alem:action-condition}).

Each oriented labelled edge has a weight $w{\left(b \overset{x_i}\to c\right)} \defeq \nu_{A_i}(b)$.
The ``blow-up''~$\mathcal O_p'$ is constructed by replacing: 1) each vertex $b \in V_p$ with $\nu_K(b)$-many vertices corresponding to the $K$-directions at $b$; and 2) each edge $b \overset{x_i}\to c$ with $w{\left(b \overset{x_i}\to c\right)}$-many oriented labelled edges in the ``obvious way.''
Let $\pi\colon \mathcal O_p' \to \mathcal O_p$ be the natural finite-to-one projection.

\begin{alem}[cf.~{\cite[Lemma~III.5]{GL95}}]\label{alem:index-inequality} Let~$G_p \le F$ be the stabilizer of $p \in T_{\mathcal K}$.
\begin{enumerate}
\item\label{alem:index-inequality:vertexgroupiso} $\pi_1(\mathcal O_p) \cong G_p$ (natural group isomorphism).
\item\label{alem:index-inequality:dirs} $\pi_0(\mathcal O_p') \cong G_p\backslash \pi_0(T_{\mathcal K} \setminus \{ p \})$ (natural set-bijection).
\item\label{alem:index-inequality:dirgroupiso} for $\mathcal O_d \in \pi_0(\mathcal O_p')$ corresponding to a $T_{\mathcal K}$-direction $d$ at $p$, $\pi_1(\mathcal O_d) \cong \mathrm{Stab}_{G_p}(d)$.
\end{enumerate}
\end{alem}
\begin{proof}[Proof sketch for lemma]
There is a natural injective homomorphism $\pi_1(\mathcal O_p, a) \to G_{\iota(a)}$ given by ``reading the oriented labels.''
This homomorphism is surjective (Lemma~\ref{alem:action-condition}) --- as $G_{\iota(a)} = x^{-1} G_p x$, this concludes part~\ref{alem:index-inequality:vertexgroupiso}.

Distinct components of~$\mathcal O_p'$ correspond to distinct $F$-orbits of $T_{\mathcal K}$-directions at translates of $p$ by Lemma~\ref{alem:action-condition}.
So there is a natural injective set-function \(\pi_0(\mathcal O_p') \to G_p \backslash \pi_0(T_{\mathcal K} \setminus \{ p \}).\)
The proof of Lemma~\ref{alem:branches-stabs} shows that this function is surjective --- this concludes part~\ref{alem:index-inequality:dirs}.

As in the first part, we have an injective homomorphism $\pi_1(\mathcal O_d, d_b) \to \mathrm{Stab}_{G_{\iota(b)}}(d_b)$, where $b \in V_p$;
in particular, $G_{\iota(b)} = x^{-1} G_p x$ and $\mathrm{Stab}_{G_{\iota(b)}}(d_b)$ is conjugate in~$F$ to a stabilizer in~$G_p$ of a $T_{\mathcal K}$-direction at~$p$.
This natural homomorphism is surjective (Lemma~\ref{alem:action-condition}) and we are done.
\end{proof}

Recall that~$S \subset K$ consists of the vertices of $K$, $A_i$, and $B_i$;
it is a finite set.
Let $\mathcal G \subset \mathcal O_p$ be a finite connected subgraph containing all vertices in~$S$ and all edges of weight~$\neq 2$.
Set $\mathcal G' \defeq \pi^{-1}(\mathcal G) \subset \mathcal O_p'$;
this is a finite subgraph as well since~$\pi$ was finite-to-one.

By Lemma~\ref{alem:branches-stabs} and Lemma~\ref{alem:index-inequality}(\ref{alem:index-inequality:dirs}-\ref{alem:index-inequality:dirgroupiso}), $\mathcal O_p'$ has finitely many components, each with rank~0 or~1 (small rigid action).
Enlarge $\mathcal G$ while maintaining finiteness and assume $\mathcal G'$ supports the fundamental group of each component of $\mathcal O_p'$.

For any finite connected graph, we have the Euler characteristic identity:
%\begin{equation}\label{athm:index-inequality:Euler}
\[ 1 - \mathrm{rank}(\pi_1(\,\cdot\,)) = \# V(\,\cdot\,) - \# E(\,\cdot\,),\]
where $V(\,\cdot\,)$ and $E(\,\cdot\,)$ denote the vertex and edge sets respectively. 
Sum over the finitely many components of $\mathcal G_j' \subset \mathcal G'$;
 we get
\[ \sum_j \left( 1 -\mathrm{rank}(\pi_1(\mathcal G_j')) \right) = \sum_{b \in V(\mathcal G)} \nu_K(b) - \sum_{e \in E(\mathcal G)} w(e).\]
Subtract the double of the Euler characteristic identity applied to~$\mathcal G$:
\begin{equation}\label{aeq:index-inequality:total}
\begin{aligned}
2\cdot \mathrm{rank}(\pi_1(\mathcal G)) - 2 +  \sum_j \left( 1 -\mathrm{rank}(\pi_1(\mathcal G_j')) \right) &= \sum_{b \in V(\mathcal G)}(\nu_K(b) - 2) - \sum_{e \in E(\mathcal G)}(w(e) - 2) \\
&= \sum_{b \in V_p}(\nu_K(b) - 2) - \sum_{e \in E(\mathcal O_p)}(w(e) - 2).
\end{aligned}
\end{equation}
The last equality follows from the fact~$\mathcal G$ was assumed to contain all vertices and edges with nonzero summand contributions to the right-hand side.
Note that the right-hand side is finite and independent of~$\mathcal G$.
The summands on the left-hand side are nonnegative;
therefore, $\pi_1(\mathcal G)$ has uniformly bounded rank and $\pi_1(\mathcal O_p)$ is finitely generated.

Assume~$\mathcal G$ supports the fundamental group of~$\mathcal O_p$.
Then each component of~$\mathcal O_p'$ has at most one component of~$\mathcal G'$.
By Lemma~\ref{alem:index-inequality}, Equation~(\ref{aeq:index-inequality:total}) can be rewritten as
\[i[p] = \sum_{b \in V_p} (\nu_K(b) - 2) - \sum_{i=1}^n \sum_{b \in V_p \cap A_i} (\nu_{A_i}(b) - 2).\]
Summing over all $F$-orbits $[p] \in F \backslash T_{\mathcal K}$ and applying the valence identity in the first paragraph of the proof, we get
\[\begin{aligned} i(F \backslash T_{\mathcal K}) &= \sum_{b \in K} (\nu_K(b) - 2) - \sum_{i=1}^n \sum_{b \in A_i} (\nu_{A_i}(b) - 2 ) \\
&= -2 + 2n \\
&= c(F) - 1.
\end{aligned}\]
This concludes the geometric case of the theorem.
\end{proof}
\begin{proof}[Proof of Part~\ref{athm:index-inequality:otherwise}]
Suppose~$T$ is a nontrivial strong limit of a direct system of equivariant morphisms $T_1 \to T_2 \to \cdots \to T$ where each~$T_i$ is geometric.

Fix a branch point $p \in T$, then choose $g_1, \ldots, g_r \in G_p$ that freely generate a free factor of $G_p$ and directions $d_1, \ldots, d_s$ at $p$ that have trivial stabilizers and are in distinct $G_p$-orbits.
As a strong limit, we can lift the point~$p$ to $p' \in T_m$ and the directions $d_j~(1 \le j \le s)$ to~$d_j'$ in~$T_m$ so that $p'$ is fixed by all $g_k~(1 \le k \le r)$ for large enough $m \ge 1$.
By the equivariance of the morphism $T_m \to T$, the $T_m$-directions $d_j'$ have trivial stabilizers and $G_{p'} \le G_p$ contains the free factor generated by $g_1, \ldots, g_r$.
So $2r - 2 + s \le i[p']$.
From Part~(\ref{athm:index-inequality:geometric}), $i[p']$ is uniformly bounded by $\iota(F \backslash T_m) = c(F)-1$.
Thus~$G_p$ is finitely generated, there are finitely many $G_p$-orbits of directions with trivial stabilizers at $p$, and $i[p] \le i[p']$.

Similarly, any finite set of branch points $p_1, \ldots \in T$ in distinct $F$-orbits can be lifted to branch points $p_1', \ldots \in T_m$ with $i[p_j] \le i[p_j']$ for large enough $m$.
By equivariance, the branch points $p_1', \ldots$ are in distinct $F$-orbits.
So we get $i(F \backslash T) \le c(F)-1$ from Part~(\ref{athm:index-inequality:geometric}).

\medskip
To show the inequality $i(F \backslash T) < c(F) - 1$, we assume the equality $i(F \backslash T) = c(F) - 1$ and deduce a contradiction.
Suppose each~$T_m$ is associated to a finite real pretree $K_m \subset T$ that is the convex hull of some partial $F$-orbit of a branch point $p_0 \in T$.

For large $m \ge 1$, $K_m$ intersects each $F$-orbit~$[p]$ of points in~$T$ with $i[p] > 0$ and $p' \in \iota_m(K_m \cap [p]) \subset T_m$ has index $i[p'] = i[p]$.
Conversely, as $i(F \backslash T_m) = i(F \backslash T)$, every $F$-orbit~$[p']$ of points in~$T_m$ with $i[p'] > 0$ intersects $\iota(K_m \cap [p])$ for some $F$-orbit of points in~$T$ with $i[p] = i[p']$.
Fix a large enough~$m \ge 1$.

We finally use the hypothesis that~$T$ is a nontrivial strong limit of $T_1 \to T_2 \to \cdots \to T$; i.e.~the equivariant morphism $j_m\colon T_m \to T$ is not injective.
Thus, two distinct directions $d_1, d_2$ at some point $q' \in T_m$ are mapped to the same direction at $q \defeq j_m(q') \in T$.

If $i[q'] > 0$, then $i[q] = i[q']$ implies $d_1, d_2$ have nontrivial stabilizers.
Their image in~$T$ also has a nontrivial cyclic stabilizer and so the union of their stabilizers generate a cyclic subgroup;
therefore, some nondegenerate arc in $d_1 \cup \{ q' \} \cup d_2$ intersecting both directions is fixed by a nontrivial element $y \in F$.
But Corollary~\ref{acor:fixed-injective} states that~$j_m$ is injective on the fixed-point set of a nontrivial element --- a contradiction.

If~$q'$ is a branch point with $i[q']=0$, then again $d_1, d_2$ have nontrivial stabilizers and the argument is the same.
If~$q'$ is not a branch point, then some nondegenerate arc in $d_1 \cup \{ q' \} \cup d_2$ intersecting both directions is contained some translate $y \cdot \iota_m(K_m)$ --- this is where we use the assumption $K_m$ was the convex hull of some branch points. 
We have another contradiction as~$j_m$ is injective on $\iota_m(K_m)$ and we are done.
\end{proof}

%%%%%%%%%%%%%%%%%%%%%%%%%%%%%%%%%%%%%%%%%%%%%%%%%%%%%%%%%%
\section{Solutions to some exercises}\label{SecSolns}
\hypertarget{prop - growth}{}

\hypertarget{prop - naive stitching}{}
%\bigskip
%\bigskip
We now state elementary facts about pretrees whose proofs are left to the reader.
Let $p,q,r$ be arbitrary points in a pretree $(T, [\cdot, \cdot])$.

\medskip
\noindent \textbullet ~(subinterval) If $q \in [p,r]$, then $[p,q] \subset [p,r]$.

%Let $s \in [p,q]$. Need to show $s\in [p,r]$.
%By thinness, $[p,q] \subset [p,r] \cup [r, q]$. If $s \in [p,r]$, then we are done. 
%Suppose $s \in [r,q]$. 
%Enough to show $s = q$. 
%By thinness, $[p,r] \subset [p,s] \cup [s,r]$.
%So $q \in [p,s]$ or $[s,r]$.
%In any case, we are done by symmetry and linearity.

%\medskip
\noindent \textbullet ~(arc-overlaps) If $[p,q] \cap [p,r]$ is not a singleton, then it is an arc.
%
%Let $x,y,z \in I \defeq [p,q] \cap [p,r]$.
%By the subinterval property (twice), $[p,x] \subset I$.
%By thinness, $y \in [p, x]$ or $[x,q]$.
%If $y \in [p,x]$, then $x,y \in [p,x]$.
%Suppose $y \in [x,q]$.
%By thinness, $x \in [p,y]$ or $[y,q]$
%If $x \in [y,q]$, then $x = y$ by linearity.
%In either case, $x,y \in [p,y]$.
%Without loss of generality, assume $x,y \in [p,y] \subset I$ (subinterval property (twice)).
%Same argument shows $y,z \in [p,y]$ or $[p,z] \subset I$ (subinterval property (twice)).
%If former, then we are done: $x,y,z \in [p,y] \subset I$.
%If latter, then we are done by the subinterval property.

%\medskip
\noindent \textbullet ~(pseudocenters) If $m,n \in [p,q] \cap [q,r] \cap [p,r]$, then $m = n$.

%Suppose $m,n \in [p,q] \cap [q,r] \cap [p,r]$.
%By thinness $[p,q] \subset [p, m] \cup [m, q]$.
%Without loss of generality, $n \in [p,m]$.
%By thinness again, $[p,q] \subset [p,n] \cup [n,q]$.
%If $m \in [p,n]$, then $m = n$ by linearity and we are done.
%Let $m \in [n,q]$.
%By thinness again, $[q,r] \subset [q,m] \cup [m,r]$.
%If $n \in [q,m]$, then we are done by symmetry and linearity.
%Let $n \in [m,r]$.
%By thinness a final time, $[p,r] \subset [p,n] \cup [n, r]$.
%So $m \in [p,n]$ or $[n,r]$.
%In any case, we are done by symmetry and linearity. 

\medskip
\begin{proof}[Checking pretree axioms are satisfied in Proposition~\ref{prop - naive stitching}]
Let $p^*,q^*,r^* \in T^*$.

\medskip
\noindent \textbullet ~Symmetric (axiom~\ref{axiom - symmetry}): 

Suppose $\pi(p^*) = \pi(q^*)$. If $\pi(p^*) \notin H \cdot \mathcal V$, then $p^* = q^*$ and the axiom holds trivially. 
If $\pi(p^*) \in F \cdot \mathcal V$, then $p^* = \iota(y,p)$ and $q^* = \iota(y,q)$ for some $y \in H$, $v \in \mathcal V$, and $p,q \in \widehat T_v$.
The axiom follows from symmetry of $[\cdot, \cdot]_v$.

We may now assume $\pi(p^*) \neq \pi(q^*)$. 
By definition, the $\pi$-preimages of the open interval $(\pi(p^*), \pi(q^*))$ in $[p^*, q^*]^*$ and $[q^*, p^*]^*$ are the same subset of $T^*$ as they have the same {\it unordered $\mathcal V$-turns} in $[\pi(p^*), \pi(q^*)]$: use symmetry of $[\cdot, \cdot]$ and $[\cdot, \cdot]_v~(v \in \mathcal V)$.
Finally, the $\pi$-preimages of $\pi(p^*)$ in $[p^*, q^*]^*$ and $[q^*, p^*]^*$ are the same: use symmetry of $[\cdot, \cdot]_v$ if $\pi(p^*) \in
H\cdot \mathcal V$; use a similar argument for~$\pi(q^*)$. 
So $[\cdot, \cdot]^*$ is symmetric.

\medskip
\noindent Set $P = [\pi(p^*), \pi(r^*)] \cap [\pi(p^*), \pi(q^*)]$ and $R = [\pi(p^*), \pi(r^*)] \cap [\pi(q^*), \pi(r^*)]$.
By thinness of~$[\cdot, \cdot]$, $[\pi(p^*), \pi(r^*)] = P \cup R$.
By arc-overlaps for $[\cdot, \cdot]$,~$P$ and~$R$ are arcs (if nondegenerate).
As $(T, [\cdot, \cdot])$ is real, closed intervals in~$T$ are complete and $P, R$ are intervals.

Suppose $M = P \cap R$ is empty, then (without loss of generality) $P = [\pi(p^*), m)$ and $R = [m, \pi(r^*)]$ for some $m \in [\pi(p^*), \pi(r^*)]$.
So $[\pi(p^*), \pi(q^*)] \cap [\pi(q^*), \pi(r^*)] = [\pi(q^*), m)$ and $[\pi(p^*), \pi(q^*)] = [\pi(p^*), m) \cup (m, \pi(q^*)]$ --- absurd;
therefore, $M$ is not empty.

\noindent In fact, $M = \{ m \}$ --- pseudocenters for $[\cdot, \cdot]$.
%In fact, $M = \{ m \}$ as $(T, [\cdot, \cdot])$ is real.
%This implies $P = [\pi(p^*), m]$ and $R = [m, \pi(r^*)]$.
Thus $P = [\pi(p^*), m]$ and $R = [m, \pi(r^*)]$.

%\noindent $L\setminus M$ and $R \setminus M$ are arcs (if nondegenerate) --- arc-overlaps for $[\cdot, \cdot]$.

\medskip
\noindent \textbullet ~Thin (axiom~\ref{axiom - thin}):
 
Suppose $\pi(p^*) = \pi(r^*)$. If $\pi(p^*) \notin H \cdot \mathcal V$, then $p^* = r^*$ and the axiom holds trivially. 
Otherwise, $[p^*, r^*]^* = \iota(y, [p,r]_v)$.
The $\pi$-preimages of $\pi(p^*)$ in $[p^*, q^*]^*$ and $[q^*, r^*]^*$ are $\iota(y, [p,q]_v)$ and $\iota(y,  [q,r]_v)$ respectively, and the axiom follows from thinness of $[\cdot, \cdot]_v$.

Now assume $\pi(p^*) \neq \pi(r^*)$.
By definition of $[\cdot, \cdot]^*$, the $\pi$-preimages of $[\pi(p^*), m)$ in $[p^*,r^*]^*$ and $[p^*, q^*]^*$ are the same subset of~$T^*$ as they depend only on the $\mathcal V$-turns in $P$. 
The same goes for $(m, \pi(q^*)]$. 
Thus \[ [p^*, r^*]^* \setminus \pi^{-1}(m) \subset \left([p^*, q^*]^* \cup [q^*, r^*]^*\right) \setminus \pi^{-1}(m).\]
To get the desired conclusion, we need only check that \[ [p^*, r^*]^* \cap \pi^{-1}(m) \subset \left([p^*, q^*]^* \cup [q^*, r^*]^*\right) \cap \pi^{-1}(m).\]
If $m \notin (F \cdot \mathcal V)$, then $\pi^{-1}(m)$ is a singleton and we are done.
Otherwise, the $\pi$-preimages of~$m$ in $[p^*,r^*]^*$,  $[p^*,q^*]^*$, and  $[q^*,r^*]^*$ are $\iota(y, [p,r]_v )$,  $\iota(y, [p,q]_v )$, and $\iota(y, [q,r]_v )$ respectively.
The axiom follows from thinness of~$[\cdot, \cdot]_v$.

\medskip
\noindent \textbullet ~Linear (axiom~\ref{axiom - linear}): 

Suppose $r^* \in [p^*, q^*]^*$ and $q^* \in [p^*, r^*]$. Then $\pi(q^*) = \pi(r^*)$ by linearity of $[\cdot, \cdot]$. 
If $\pi(q^*) \notin H \cdot \mathcal V$, then $q^* = r^*$ and we are done.
Otherwise, $q^* = \iota(y,q)$ and $r^* = \iota(y,r)$.
The $\pi$-primages of $\pi(q^*)$ in $[p^*, q^*]^*$ and $[p^*, r^*]^*$ are $\iota(y, [p,q]_v)$ and $\iota(y, [p,r]_v)$ respectively, and the axiom follows from linearity of $[\cdot, \cdot]_v$.
\end{proof}

\hypertarget{thm - ideal stitch}{}
\bigskip
\begin{proof}[Characterizing when~$f^*$ is a pretree-automorphism in Theorem~\ref{thm - ideal stitch}] {~}

Recall that the set-bijection~$f^*\colon T^* \to T^*$ is a pretree-automorphism exactly when $f^*([p^*,q^*]^*) = [f^*(p^*), f^*(q^*)]^*$ for all $p^*, q^* \in T^*$. 
%The equation holds automatically if $p^* = q^*$.
%So we may assume $p^* \neq q^*$.
Let $\pi\colon T^* \to T$ be the collapse map from the construction of~$T^*$.
By definition of~$f^*$, $\pi \circ f^* = f \circ \pi$.
As~$\pi$ restricts to a pretree-isomorphism $\iota(T \setminus (F \cdot \mathcal V)) \to T \setminus (F \cdot \mathcal V)$, we only need to check that 
\[f^*([p^*,q^*]^* \cap \iota(F \times \mathcal T_{\mathcal V})) =  [f^*(p^*), f^*(q^*)]^*  \cap \iota(F \times \mathcal T_{\mathcal V}) \]
Moreover, if~$\pi(p^*) = \pi(q^*) = x \cdot v$ for some $x \in F$ and $v \in \mathcal V$, i.e.~$p^* = \iota(x, p)$ and $q^* = \iota(x, q)$, then 
\[\begin{aligned}
f^*( [p^*,q^*]^* ) &= f^*( \iota(x, [p,q]_v) ) \\
&= \iota(\psi(x)x_v, \hat h_v([p,q]_v) ) \\
&= [\,\iota(\psi(x)x_v, \hat h_v(p) ),~\iota(\psi(x)x_v, \hat h_v(q) )\,]^* = [f^*(p^*), f^*(q^*)]^*.
\end{aligned}\]
The first interesting case is when both $\pi(p^*)=p, \pi(q^*)=q$ are not in $F \cdot \mathcal V$, i.e.~$p^* = \iota(p)$ and $q^* = \iota(q)$.
We may also assume $p \neq q$ to ignore a degenerate case.
Since we are considering the intersection with $\iota(F \times \mathcal T_{\mathcal V})$, assume $y \cdot v \in [p,q]$.
Let $ys_{-} \cdot d_{-}$ and $ys_{+} \cdot d_{+}$ (for some $s_{\pm} \in G_v$, $d_{\pm} \in \mathcal D_v$) be the two directions at $y\cdot v$ determined by the interval $[p,q]$.
So \[\begin{aligned} f^*([p^*, q^*]^* \cap \iota(y, T_v)) &= f^*(\iota(y, [s_{-} \cdot p_{-}, s_{+} \cdot p_{+}]_v )) \\
&= \iota(\psi(y)x_v, [\psi_v(s_{-}) \cdot \hat h_v(p_{-}), \psi_v(s_{+}) \cdot \hat h_v(p_{+})]_{\beta \cdot v}),
\end{aligned}\]
where $p_{\pm}$ are the attaching points in $\vec p$ chosen for direction $d_{\pm}$ respectively.

Applying~$f$ to $y\cdot v \in [p,q]$ gives us $\psi(y)x_v \cdot \beta(v) \in [f(p), f(q)]$;
recall that $f(v) = x_v \cdot \beta(v)$ by definition of~$\beta \in \mathrm{Sym}(\mathcal V)$.
The two directions at $\psi(y)x_v \cdot \beta(v)$ determined by the interval $[f(p),f(q)]$ are $\psi(ys_{\pm})x_v s_{\partial \cdot d_{\pm}} \cdot \partial(d_{\pm})$ (for some $s_{\partial \cdot d_{\pm}} \in G_{\beta \cdot v}$) respectively by definition of~$\partial \in \mathrm{Sym}(\bigcup_{v \in \mathcal V} \mathcal D_v)$.
Since $\psi(ys_{\pm})x_v= \psi(y)x_v\psi_v(s_{\pm})$ and $\psi_v(s_{\pm}) \in G_{\beta \cdot v}$, we have
\[ [f^*(p^*), f^*(q^*)]^*  \cap \iota(\psi(y)x_v, T_{\beta \cdot v}) = \iota(\psi(y)x_v,  [\psi_v(s_{-})s_{\partial \cdot d_{-}} \cdot q_{-}, \psi_v(s_{+})s_{\partial \cdot d_{+}} \cdot q_{+}]_{\beta \cdot v}), \]
where $q_{\pm}$ are the attaching points in $\vec p$ chosen for directions $\partial(d_{\pm})$ respectively.

So $f^*([p^*, q^*]^* \cap \iota(y, T_v)) =  [f^*(p^*), f^*(q^*)]^*  \cap \iota(\psi(y)x_v, T_{\beta \cdot v})$ if and only if \[ \hat h_v(p_{-}) = s_{\partial \cdot d_{-}} \cdot q_{-} \quad \text{and} \quad \hat h_v(p_{+}) = s_{\partial \cdot d_{+}} \cdot q_{+}.\]

\medskip
The remaining cases are handled almost exactly the same. 
Put together, this proves that the set-bijection~$f^*$ is a pretree-automorphism if and only if 
\[ \hat h_v(p_d) = s_{\partial \cdot d} \cdot p_{\partial \cdot d} \quad \text{for all } v \in \mathcal V \text{ and } d \in \mathcal D_v. \qedhere\]
\end{proof}

%%%%%%%%%%%%%%%%%%%%%%%%%%%%%%%%%%%%%%%%%%%%%%%%%%%%%%%%%%
\bibliography{zrefs}
\bibliographystyle{plain}
%%%%%%%%%%%%%%%%%%%%%%%%%%%%%%%%%%%%%%%%%%%%%%%%%%%%%%%%%%

\end{document}